\newtheorem{theorem}{Theorem}[section]
\newtheorem{lemma}[theorem]{Lemma}
\newtheorem{proposition}[theorem]{Proposition}
\theoremstyle{definition}
\newtheorem{definition}[theorem]{Definition}
\newtheorem{example}[theorem]{Example}
\theoremstyle{remark}
\newtheorem{remark}[theorem]{Remark}
\numberwithin{equation}{section}
\newtheorem*{thm*}{Theorem}
\newtheorem*{notation}{Notation}
\newtheorem*{ack}{Acknowledgements}
\newcommand{\bone}{\mathbf{1}}
\newcommand{\N}{\mathbb{N}}
\newcommand{\R}{\mathbb{R}}
\newcommand{\cK}{\mathcal{K}}
\newcommand{\dl}{\delta}
\newcommand{\eps}{\varepsilon}
\newcommand{\cl}[1]{\mkern 1.5mu\overline{\mkern-1.5mu#1\mkern-1.5mu}\mkern 1.5mu}
\newcommand{\relmiddle}[1]{\mathrel{}\middle#1\mathrel{}}
\numberwithin{equation}{section}
\begin{document}

\title[Discrete approximation of reflected Brownian motions]{Discrete approximation of reflected Brownian motions by Markov chains on partitions of domains}



\author{ Masanori Hino}
\address{Department of Mathematics, Kyoto University, Kyoto 606-8502,  Japan}
\curraddr{}
\email{hino@math.kyoto-u.ac.jp}
\thanks{The first author was supported by JSPS KAKENHI Grant Number 19H00643.}

\author{Arata Maki }
\address{ Ichinomiya-shi, Aichi, Japan}
\curraddr{}
\email{hamdam@ezweb.ne.jp}
\thanks{}

\author{Kouhei Matsuura}
\address{Institute of Mathematics, University of Tsukuba, 1-1-1, Tennodai, Tsukuba, Ibaraki, 305-8571, Japan}
\curraddr{}
\email{kmatsuura@math.tsukuba.ac.jp}
\thanks{The third author was supported by JSPS KAKENHI Grant Number 20K22299  and 	22K13926.
}

\subjclass[2020]{Primary 60F17,  60J27, 60J60.}

\date{}

\dedicatory{}

\keywords{Discrete approximation,  Markov chain, Reflected Brownian motion, Skorokhod space}

\begin{abstract}
In this paper,  we study discrete approximation of reflected Brownian motions on domains in Euclidean space.  Our approximation is given by a sequence of Markov chains on partitions of the domain, where we allow uneven or random partitions. We provide sufficient conditions for the weak convergence of the Markov chains.
\end{abstract}

\maketitle

\section{Introduction}

Discrete approximation of a space is a problem that arises in various fields of mathematics.  In probability theory,  this is often studied in connection with the scaling limits of random walks or Markov chains.  A typical example is approximating the Brownian motion on the $d$-dimensional Euclidean space $\R^d$  by means of simple random walks on the scaled lattices $n^{-1}\mathbb{Z}^d=\{x \in \R^d \mid nx \in \mathbb{Z}^d \}$,   $n \in \mathbb{N}$,  and this result has been extended in various directions.  For example,  Stroock and Zheng~\cite{SZ} proved that a symmetric diffusion process on $\R^d$ corresponding to a uniformly elliptic divergence form operator can be approximated by a sequence of Markov chains on $n^{-1}\mathbb{Z}^d$,   $n \in \mathbb{N}$.  Recently,  discrete approximation for  a class of jump processes on $\R^d$ has also been considered; see,  e.g.,  \cite{BKU, CKK}.  

When the state space under consideration is the closure $\cl{D}$ of a proper domain $D$ of $\R^d$, a canonical diffusion process thereon is the (normally) reflected Brownian motion (RBM in abbreviation).   
  Roughly speaking,  the RBM  on $\cl{D}$ is a continuous Markov process taking values in $\cl{D}$ that behaves like the Brownian motion in $D$ and is instantaneously pushed back along the inward normal direction when it hits the boundary $\partial D$ of $D$.  If $\partial D$ is sufficiently smooth (e.g., if $D$ is a $C^2$-domain), then the RBM can be constructed   as a solution to the deterministic Skorohod problem or the submartingale problem.   In both cases, the RBM solves a stochastic differential equation described by  \eqref{eq:skorohod} below,  which also shows that the associated generator is the Neumann Laplacian on $\cl{D}$.  In \cite{SV},  Stroock and Varadhan studied discrete approximation  to prove an invariance principle for a class of diffusion processes  including the RBM on a $C^2$-domain; see \cite[Theorem~6.3]{SV} for details.  Burdzy and Chen~\cite{BC2} extended this result in view of the smoothness of $\partial D$.   More precisely,   
they showed  that RBM on a general bounded domain can be approximated by simple random walks on subsets of $(2^{-n}\mathbb{Z}^d) \cap D$,  $n \in \mathbb{N}$.  This RBM is constructed by a Dirichlet form; this is identified with the RBM constructed by the methods stated above in the case of smooth domains (see,  e.g.,  \cite[Theorem~4.4]{C}).

On the other hand,  discrete approximation of a Euclidean domain by inhomogeneous point sets has often been studied from an analytic perspective.  For example,  a recent study by Davis and Sethuraman~\cite{DS}  discussed approximation of paths in a domain $D$ that minimize cost functions  by discrete paths on graphs consisting of independent  and identically distributed random points in $D$; for the precise statement, see  \cite[Theorems~2.1 and 2.2]{DS}.  We also note that Garc\'{\i}a Trillos~\cite{Gar} recently studied a similar discrete approximation of the Wasserstein distance on the $d$-dimensional torus.  In order to gain a better understanding of discrete approximation by such point sets,  a natural problem to consider next is the scaling limits of Markov chains thereon (see,  e.g., \cite{cr,cr2} for related studies). Keeping such motivation in mind,  in this paper,  we study approximation to the RBM by  Markov chains on a sequence of general partitions  $\{\cK^{(n)}\}_{n=1}^\infty$ of $\cl{D}$ whose sizes converge to zero appropriately as $n \to \infty$.   A typical example of such  partitions  is  the Voronoi partition  associated 
with uniformly distributed independent random points in $D$ (see Example~\ref{ex:voronoi} below).  In such a situation,  it is a non-trivial problem to construct appropriate Markov chains approximating the RBM because  of the inhomogeneity of the partitions.  Our main purpose herein is to address this difficulty, and in Theorem~\ref{thm:4} (our main result), 
we construct a specific Markov chain $X^{(n)}$ on each $\cK^{(n)}$ in such a way that the inhomogeneity is taken into consideration,  and we prove that its distribution converges weakly to the RBM in the Skorohod space as $n \to \infty.$ 

\begin{figure}[t]
\centering
    \vspace{0.30cm}
  \begin{overpic}[scale=0.28]{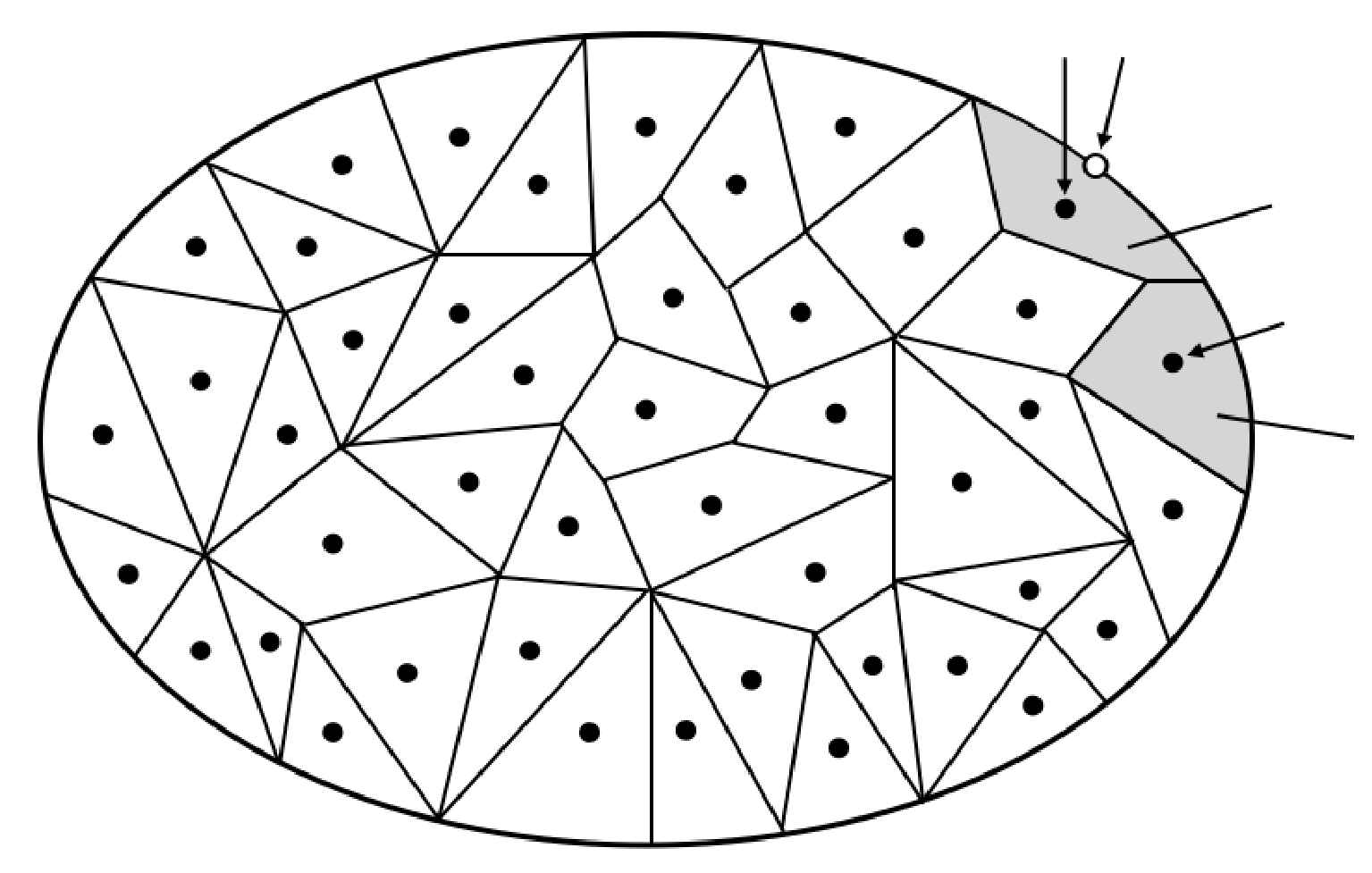}
  \put(5,59.5){\huge $\cl{D}$}
    \put(76.5,62){\large $\bar{\xi}$}
            \put(81.5,62){\large $\hat{\xi}$}
        \put(94,48){\large $\xi$}
                \put(94.8,39.5){\large $\bar{\eta}$}
        \put(99.7,30.5){\large $\eta$}
    \end{overpic}
    \caption{Figure of a partition of $\cl{D}$. The black dot on each cell denotes the center of gravity of that cell. The white dot denotes the nearest point of $\partial D$ to the center of gravity.}
    \label{figure:partition}
\vspace{-0.5cm}
\end{figure}
We begin by focusing on the case of $D=\R^d$ to explain how to define a desirable sequence of Markov chains  $\{X^{(n)}\}_{n=1}^\infty$ and scaling factors.   Let $n \in \N$ and let $\xi $ be an element of $\cK^{(n)}$,  which we call a {\it cell}.  The set $\mathcal{N}_{\xi}^{(n)}$ of all neighbor cells of $\xi$ is defined as a collection of cells that lie within appropriate distance from $\xi$. More precisely, we  take a function $\rho_n\colon \cK^{(n)} \to (0,\infty)$ and  define 
\begin{equation}\label{eq:nn}
\mathcal{N}_{\xi}^{(n)}=\{\eta \in \cK^{(n)}  \mid |\bar{\eta}-\bar{\xi}| <\rho_n(\xi)\},
\end{equation}
where $\bar{\eta} \in \R^d$ denotes the center of gravity of $\eta$ (see Fig.~\ref{figure:partition}).   A natural Markov chain $M^{(n)}$ on $\cK^{(n)}$ is associated with a generator $U^{(n)}\colon \R^{\cK^{(n)}} \to \R^{\cK^{(n)}}$  defined by
 \[
U^{(n)}f(\xi)=\frac{1}{\rho_n(\xi)^2}\sum_{\eta \in \mathcal{N}_{\xi}^{(n)}}\left(f(\eta)-f(\xi) \right)\frac{m(\eta)}{m(O_{\xi}^{(n)})},
\]
where $O_{\xi}^{(n)}=\bigcup_{\eta \in \mathcal{N}^{(n)}_{\xi}} \eta$ and $m$ is the $d$-dimensional Lebesgue measure.   This process behaves as follows:  when $M^{(n)}$ visits a cell $\xi$,   it stays there for an exponentially distributed waiting time with parameter $1/\rho_n(\xi)^2$,  and then it jumps to $\eta \in \mathcal{N}^{(n)}_{\xi}$ with probability $m(\eta)/m(O_\xi^{(n)})$.  
In order to discuss the weak convergence of $M^{(n)}$,   we assume $\sup_{\xi \in \cK^{(n)}}\rho_n(\xi)<\infty$ so that $M^{(n)}$ is conservative,  and we consider the convergence of $U^{(n)}$ under a projection map $\pi_n \colon C(\R^d) \to \R^{\cK^{(n)}}$ defined by $
\pi_n f(\xi)=f(\bar{\xi})$.  
 Let $f\colon \R^d \to \R$ be a smooth function on $\R^d$.  Then, for any $\xi \in \cK^{(n)}$, we have
\begin{align*}
U^{(n)}(\pi_n f)(\xi)=\frac{1}{\rho_n(\xi)^2}\sum_{\eta \in \mathcal{N}_{\xi}^{(n)}}\left(f(\cl{\eta})-f(\cl{\xi}) \right)\frac{m(\eta)}{m(O_{\xi}^{(n)})}. 
\end{align*}
 By applying Taylor's formula to the right-hand side of the above equation,  it is expanded as
\begin{align}\label{eq:generator0}
&\frac{1}{\rho_n(\xi)^2} \sum_{\eta \in \mathcal{N}_{\xi}^{(n)}}  \langle \nabla f(\cl{\xi}),\bar{\eta} -\cl{\xi}\rangle   \frac{ m(\eta)}{m(O_{\xi}^{(n)})}  \\
&+\frac{1}{\rho_n(\xi)^2}  \sum_{\eta \in \mathcal{N}_{\xi}^{(n)}}\left( \int_{0}^1 (1-t)\langle \bar{\eta} -\cl{\xi}, \nabla^2 f(\cl{\xi}+t(\bar{\eta} -\cl{\xi}))(\bar{\eta} -\cl{\xi})\rangle\,dt \right) \frac{m(\eta)  }{m(O_{\xi}^{(n)})},  \notag
\end{align}
where $\langle \cdot, \cdot \rangle$ denotes the standard inner product on $\R^d.$ When $\cK^{(n)}$ has no  lattice structure,  we note that the first term in \eqref{eq:generator0} does not necessarily vanish.   Some computations imply that this is bounded above by 
$ |\nabla f(\bar{\xi})|  \eps_n(\xi)/\rho_n(\xi)^2$ up to a multiplicative constant depending only on $d$ (see \eqref{eq:b} and Lemma~\ref{lem:A} below).  Here,  $\eps_n(\xi)$ is akin to the maximal diameter among all neighbor cells of $\xi$ (see \eqref{eq:N2} below for the definition). From this estimate,  as $n \to \infty$,  $\overline{M^{(n)}}$ and $M^{(n)}$ are expected to converge weakly to a (possibly time-changed) $d$-dimensional Brownian motion as long as $\lim_{n \to \infty}\sup_{\xi \in \cK^{(n)}} \eps_n(\xi)/\rho_n(\xi)^2=0$ and $\lim_{n \to \infty}\sup_{\xi \in \cK^{(n)}} \rho_n(\xi)=0$.   In fact,  by introducing a correction term to cancel the first term in \eqref{eq:generator0},  we obtain a convergence result under milder conditions.  This is how  we define the  Markov chain $X^{(n)}$ corresponding to a generator that  suitably modifies  $U^{(n)}$.


 If $ D$ is a proper domain of $\R^d$,   we need to 
 take reflections at the boundary into consideration. For this purpose, we classify the cells into two types, 
 ``interior cells'' and ``boundary cells.'' If $\xi \in \cK^{(n)}$ is an interior cell,   the set $\mathcal{N}_{\xi}^{(n)}$ of all neighbor cells of $\xi$ is defined as in \eqref{eq:nn}.   If $\xi$ is a boundary cell,   we take $\hat{\xi} \in \partial D$ satisfying  $|\hat{\xi}-\bar{\xi}|=\text{dist}(\bar{\xi},\partial D)$   (see the white dot in Fig.~\ref{figure:partition}) and define $\mathcal{N}_{\xi}^{(n)}=\{\eta \in \cK^{(n)} \mid |\bar{\eta}-\hat{\xi}|<\rho(\xi)\}$.  This definition is made so that the term 
 \[ \frac{1}{\rho_n(\xi)^2} \sum_{\eta \in \mathcal{N}_{\xi}^{(n)}}  \langle \nabla f(\hat{\xi}),\bar{\eta} -\hat{\xi}\rangle \, \frac{m(\eta)}{m(O_{\xi}^{(n)})}\] appears instead of the first term in \eqref{eq:generator0}.  
Writing $\nu(\hat{\xi})$ for the inward unit normal vector at $\hat{\xi}$,  we expect this new term to approximate a constant multiple of $\langle \nabla f(\hat{\xi}),\nu(\hat{\xi})\rangle$,  which should appear for the generator  to converge to the Neumann Laplacian on $\cl{D}$.  However,  justification of this also requires a correction term and a sufficient smoothness of $\partial D$.   For this,  we assume that $D$ is a $C^{1,\alpha}$-domain for some $\alpha \in (0,1].$

We emphasize that it is not clear whether the operators with correction terms become generators.  We need some quantitative estimates for the  correction terms to provide modified generators.  After these preparations,  we can derive a sufficient condition under which these  generators converge to the standard Laplacian on the subspace of $C^2(\R^d)$ with Neumann boundary condition on $\partial D.$  
Because the Neumann Laplacian on $\cl{D}$ is the generator of RBM,  we can also strengthen this convergence result to the weak convergence of $\{X^{(n)}\}_{n=1}^\infty$ to RBM.

We make some comments on related studies.  A recent study by Ishiwata and Kawabi~\cite{IK} discussed a discrete approximation of a drifted Schr\"{o}dinger operator on a complete Riemannian manifold by Markov chains on its (possibly random) partitions.  
The Ricci curvature is heavily involved in this approximation.  Though the motivation of the study and constraints of the scaling factors are different from ours,  their study suggests that our setting could be more generalized.  As already mentioned above,  Stroock and Varadhan~\cite{SV} used Markov chains  to prove discrete approximation to diffusion processes with boundary conditions. Although the state spaces of the Markov chains are general,  no specific constructions for correction terms are done, unlike in our study.  In addition, their proof makes use of the well-posedness of submartingale problems, whereas the key point of our proof is to obtain discrete approximation to the Neumann Laplacian with respect to the uniform convergence topology. 

The rest of this paper is organized as follows. In Section~2, we set up
our framework, state main theorems (Theorems~\ref{thm:1},  \ref{thm:2},  \ref{thm:3},  and \ref{thm:4}),  and present
some typical examples.  In Section~3, we give some preliminary results to define and estimate the correctors.   Section~4 is devoted to proving Theorems~\ref{thm:1} and \ref{thm:2}, with
Theorem~\ref{thm:3} obtained by combining these two theorems.  
In Section~5,  we apply  Theorem~\ref{thm:3} to  prove Theorem~\ref{thm:4}.
In Section~6,  we discuss a few problems as future issues. Appendix~A provides some boundary estimates.  Appendix~B provides a proof of Proposition~\ref{prop:core} about a claim on a core for the Neumann Laplacian.

\begin{notation}
The following notation is used in this paper.
\begin{itemize}
\item  For $x \in \R^d$ and  $i \in \N$  with $1\le i\le d$,   we write $x_i$ for the $i$-th coordinate of $x$.
\item  The closure and boundary of $S \subset \R^d $ in $\R^d$ are denoted by $\cl{S} $ and $\partial S$,  respectively.
\item $\langle\cdot,\cdot\rangle$ and $|\cdot|$ denote the standard inner product and norm of $\R^d$, respectively. 
\item For $S \subset \R^d$ with $S\ne\emptyset$, we set $\text{diam}(S)=\sup_{x,y \in S}|x-y|$. 
\item For $x\in\R^d$ and $r\in (0,\infty)$, $B(x,r)$ (resp.\ $\cl{B}(x,r)$) denotes the open (resp.\  closed) ball in $\R^d$ with center $x$ and radius $r$.
\item $m$ denotes the Lebesgue measure on $\R^d$.  
\item  We write $\omega_d=m(B(0,1))$ and set $\omega_0=1$.
\item For a Borel subset $S \subset \R^d$ with $m(S) \in (0,\infty)$ and an $m$-integrable function $f\colon S \to [-\infty,\infty]$, we define \[\fint_{S}f(x)\,m(dx)=\frac{1}{m(S)}\int_{S}f(x)\,m(dx).\] If $f$ is a vector valued function,  $\fint_{S}f(x)\,m(dx)$ is defined similarly if it exists.
\item For $d,k \in \N$,  $\R^d \otimes \R^k$ denotes the set of all $d\times k$ real matrices.  For $A \in \R^d \otimes \R^k$,  ${}^{\,t\!}A$ denotes its transposed matrix. 
\item For $A \in \R^d\otimes \R^d$,  we write $\|A\|$ and $\text{Tr}[A]$ for the operator norm and the trace, respectively.  We write $I_d$ for the identity matrix of order $d$.
\item For $a,b \in \R$, we set $a \vee b =\max \{a,b\}$ and $a \wedge b=\min \{a,b\}.$
\item $\# I$ is the cardinality (number of elements) of a set $I$.
\item $\inf\emptyset=\infty$ by convention.
\end{itemize}
\end{notation}

\section{Statement of results}

Let $D$ be a (possibly unbounded) domain in $\R^d$ with $m(\partial D)=0$.   First, we introduce the following definition.  

\begin{definition}
A countable collection $\mathcal{K}$ of subsets in $\R^d$ is said to be a {\it partition} of $\overline{D}$ if the following conditions are satisfied.
\begin{enumerate}
\item  Any $\xi \in \mathcal{K}$ is a compact subset of $\cl{D}$ with $m(\xi)>0$.
\item For any $\xi,\eta \in \mathcal{K}$ with $\xi \neq \eta$, we have $m(\xi \cap \eta)=0$.
\item We have $\cl{D}=\bigcup_{\xi \in \mathcal{K}}\xi $.
\item  For any $x \in \cl{D}$ and $r \in (0,\infty)$, we have
\begin{align}\label{eq:lf}
\#\{\xi \in \mathcal{K} \mid \xi \cap B(x,r) \neq \emptyset \}<\infty.
\end{align}
\end{enumerate} 
\end{definition}

For the moment,  we fix a partition $\mathcal{K}$ of $\cl{D}$.   For $\xi \in \mathcal{K}$,  let $\bar{\xi} \in \R^d$ denote  the center of gravity of $\xi$,  that is,
\[
\bar{\xi}=\fint_{\xi}x\,m(dx).
\]
Fix a positive function $\delta \colon \mathcal{K} \to (0,\infty)$ and set
\begin{equation}\label{eq:boundarycell}
\partial \mathcal{K}=\{\xi \in \mathcal{K} \mid B(\bar{\xi},\dl(\xi)) \not \subset D\}.
\end{equation}
For any $\xi \in \partial \mathcal{K}$,  there exists (possibly more than one point) $x \in \partial D$ such that $|\bar{\xi}-x|=\text{dist}(\bar{\xi},\partial D)$.  We choose and fix this $x$ and denote it by $\hat{\xi}$. 
For $\xi \in \cK$,  define $\check{\xi} \in \R^d$ by
\begin{equation*}
\check{\xi}=\begin{cases}
\bar{\xi} &\quad \text{if}\quad \xi \in \cK \setminus \partial \mathcal{K}, \\
\hat{\xi} &\quad \text{if} \quad \xi \in \partial \mathcal{K}.
\end{cases}
\end{equation*}
We take $\rho \colon \cK \to (0,\infty)$ such that 
\begin{align}\label{eq:r}
\rho(\xi)&
\left\{
    \begin{alignedat}{2}
&=\dl(\xi)  &\quad&   \text{ if }\quad \xi \in \cK \setminus \partial \cK ,  \\
&> \dl(\xi) &  \quad   &    \text{ if }\quad \xi \in \partial \cK.
 \end{alignedat}
    \right.
    \end{align}
For $\xi \in \mathcal{K} $,  define 
\begin{align}\label{eq:N1}
\begin{split}
\mathcal{N}_{\xi} &=\{\eta \in \mathcal{K} \mid \bar{\eta} \in B(\check{\xi},\rho(\xi))\},\quad
O_{\xi}=\bigcup_{\eta \in \mathcal{N}_{\xi} }\eta.
\end{split}
\end{align}
We see that \eqref{eq:r} implies $\mathcal{N}_\xi \neq \emptyset$. We also define 
\begin{equation}\label{eq:N2}
\widetilde{\mathcal{N}}_{\xi}=\{\eta \in \mathcal{K} \mid \eta \cap  B(\check{\xi},\rho(\xi)) \neq \emptyset\},\quad \eps(\xi)=\max_{\eta \in \mathcal{N}_\xi \cup \widetilde{N}_\xi} \sup_{y \in \eta}|y-\bar{\eta}|.
\end{equation}
Define an operator  $L\colon \R^{\cK} \to \R^{\cK}$ as
 \begin{equation}\label{eq:gener}
Lf(\xi)=\frac{1}{q(\xi)}\sum_{\eta \in \mathcal{N}_{\xi}}\left(f(\eta)-f(\xi) \right)\left(1-c(\xi,\eta)\right)\frac{m(\eta)}{m(O_{\xi})},
\end{equation}
where $q(\xi)$ and $c(\xi,\eta)$ will now be explained.

Let $\mathcal{N}_{\xi}$ be expressed as  \[\mathcal{N}_{\xi}=\{\eta^{(1)},\ldots,\eta^{(N_\xi)}\},\]
where $N_\xi=\# \mathcal{N}_\xi$ and $1 \le N_\xi <\infty$ by \eqref{eq:lf}.  We use this expression to define  $A(\xi) \in \R^d \otimes \R^{N_\xi}$ whose $(i,j)$-th element is given by
\begin{align}\label{eq:A1}
        &(\overline{\eta^{(j)}}_i-\check{\xi}_i) \frac{m(\eta^{(j)})}{m(O_{\xi})}.
\end{align}
Next,  we assume that $D$ is a $C^1$-domain and  let $\nu$ denote the inner unit normal vector on $\partial D$.  
Let $\mathrm{ B}(\cdot,\cdot)$ denote the beta function and set\footnote{$\beta_d$ coincides with $\frac{2\omega_{d-1}}{(d+1)\omega_d}$ (see \eqref{eq:uph1} in Appendix~A). }
 \[\beta_d=\frac{2}{(d+1)\mathrm{ B}(\frac{1}{2},\frac{d+1}{2})}.\]
Define $b \colon \cK \to \R^d$ by
\begin{align}\label{eq:b}
b(\xi)&=
\left\{
    \begin{alignedat}{2}
&  \sum_{\eta \in \mathcal{N}_{\xi}} (\bar{\eta}-\bar{\xi}) \frac{m(\eta)}{m(O_{\xi})} &\quad&   \text{ if }\quad \xi \in \cK \setminus \partial \cK ,  \\
&\sum_{\eta \in \mathcal{N}_{\xi}} (\bar{\eta}-\hat{\xi}) \frac{m(\eta)}{m(O_{\xi})}  - \beta_d \rho(\xi)\nu(\hat{\xi})&  \quad   &    \text{ if }\quad \xi \in \partial \cK.
 \end{alignedat}
    \right.
    \end{align}

For $\xi \in \cK$,  let $A^{+}(\xi) \in \R^{N_\xi}\otimes \R^d $ denote the generalized inverse matrix\footnote{For $l,m \in \N$ and $A \in \R^{l}\otimes \R^{m}$,  the generalized inverse matrix $A^{+} \in \R^{m}\otimes \R^{l}$ of $A$
is defined as a unique matrix satisfying the following four equations:
\begin{align*}
AA^{+}A=A,\quad A^{+}AA^{+}=A^{+},\quad {}^{\,t\!}(AA^{+})=AA^{+},\quad \text{and}\quad {}^{\,t\!}(A^{+}A)=A^{+}A.
\end{align*}} of $A(\xi)$   and define $c(\xi) \in \R^{N_\xi}$ by 
\begin{equation}\label{eq:c0}
c(\xi)=A^{+}(\xi)b(\xi).
\end{equation}
We will often write $c(\xi)={}^t(c(\xi,\eta^{(1)}),\ldots,c(\xi,\eta^{(N_\xi)})).$ Define $Q\colon \cK \to \R^{d}\otimes \R^d$ and $q\colon \cK \to \R$ by  \begin{equation}\label{eq:dQ1}
Q(\xi)= \sum_{\eta \in \mathcal{N}_{\xi}} (\bar{\eta}-\check{\xi})\otimes (\bar{\eta}-\check{\xi})\, \frac{m(\eta)}{m(O_{\xi})},\quad q(\xi)=\frac{1}{d}\text{Tr}[Q(\xi)].
\end{equation}
The following lemma is used to define a generator on $\cK$ and gives its quantitative estimate.  Define $a_1\colon (0,\infty) \to \R$ by
\[
a_1(t)=\frac{1}{d+2}\frac{(1-t)^{d+2}}{ (1+t)^d}-t^2
\]
and fix $c_1 \in (0,\min\{t>0 \mid a_1(t)=0\})$.
\begin{lemma}\label{lem:A}
Assume 
\begin{equation}\label{eq:A}
\sup_{\xi \in \cK \setminus \partial \cK}\frac{\eps(\xi)}{\rho(\xi)} \le c_1.
\end{equation}
Then,  there exists $C>0$ depending only on $d$ and $c_1$ such that for any $\xi \in \cK \setminus \partial \cK$,
\begin{equation*}
\max_{\eta \in \mathcal{N}_{\xi}}|c(\xi,\eta)|  \le  \frac{C \eps(\xi)}{\rho(\xi)} \quad  \text{and }\quad q(\xi) \ge C^{-1}\rho(\xi)^2.
\end{equation*} 
\end{lemma}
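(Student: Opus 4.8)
The plan is to work entirely with an interior cell $\xi \in \cK \setminus \partial\cK$, normalize by $\rho(\xi)$, and reduce both assertions to uniform estimates on the normalized matrix $A(\xi)$ and vector $b(\xi)$. Write $r = \rho(\xi)$ and for $\eta \in \cN_\xi$ set $v_\eta = (\bar\eta - \bar\xi)/r \in \cl{B}(0,1)$ and $w_\eta = m(\eta)/m(O_\xi) \in (0,1]$, so that $\sum_{\eta} w_\eta = 1$. In this notation $A(\xi) = r \sum_\eta v_\eta \otimes e_\eta^{\,t} w_\eta$ (thinking of $A(\xi)$ as a map $\R^{N_\xi}\to\R^d$), $b(\xi) = r \sum_\eta v_\eta w_\eta$, and $Q(\xi) = r^2 \sum_\eta w_\eta\, v_\eta \otimes v_\eta$, hence $q(\xi) = (r^2/d)\sum_\eta w_\eta |v_\eta|^2$. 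The key geometric input is that $O_\xi$ contains a ball of radius comparable to $r$ around $\bar\xi$: since $\xi \in \cK\setminus\partial\cK$ we have $B(\bar\xi,\delta(\xi)) = B(\bar\xi,r) \subset D$, and by definition of $\widetilde\cN_\xi$ every cell meeting $B(\bar\xi, r - \eps(\xi))$ — actually one should be slightly careful here and use that any point of $B(\bar\xi, r(1-c_1))$ lies in some cell $\eta$ with $\bar\eta \in B(\bar\xi, r)$, using $\eps(\xi) \le c_1 r$ — belongs to $\cN_\xi$. Consequently $B(\bar\xi, r(1-c_1)) \subset O_\xi$, so $m(O_\xi) \ge \omega_d r^d (1-c_1)^d$, while trivially $m(O_\xi) \le \omega_d r^d (1+c_1)^d$ since every $\eta \in \cN_\xi$ lies in $B(\bar\xi, r(1+c_1))$.

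\smallskip

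For the lower bound on $q(\xi)$: the inclusion $B(\bar\xi, r(1-c_1)) \subset O_\xi$ lets me estimate $\sum_\eta w_\eta |v_\eta|^2 = m(O_\xi)^{-1}\int_{O_\xi} r^{-2}|\bar{\eta(x)} - \bar\xi|^2\,m(dx)$ from below, where $\eta(x)$ is the cell containing $x$. Crudely, $|\bar{\eta(x)} - \bar\xi| \ge |x - \bar\xi| - \eps(\xi) \ge |x-\bar\xi| - c_1 r$, and integrating $(|x-\bar\xi| - c_1 r)_+^2$ over $B(\bar\xi, r(1-c_1))$ and dividing by $m(O_\xi) \le \omega_d r^d(1+c_1)^d$ produces exactly a quantity of the form $r^2 \cdot (\text{something} \ge a_1(c_1) > 0)$ — indeed this is precisely why $a_1$ and $c_1$ are defined as they are: one computes $\int_{B(0,1-c_1)}(|y| - c_1)^2\,m(dy) = d\omega_d \int_0^{1-c_1}(s-c_1)^2 s^{d-1}\,ds$, which after bounding $s^{d-1}$ or by direct integration dominates $\frac{\omega_d}{d+2}(1-c_1)^{d+2}$ up to the combinatorial constant, and dividing by $(1+c_1)^d$ and subtracting the slack gives $a_1(c_1)$. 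Thus $q(\xi) \ge C^{-1} r^2$ with $C$ depending only on $d$ and $c_1$.

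\smallskip

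For the bound on $|c(\xi,\eta)|$: first estimate $|b(\xi)|$. Since $\bar\xi = \fint_\xi x\,m(dx)$ and $B(\bar\xi,r)\subset D$, a symmetry/centering argument — as indicated by the paper's footnote referencing \eqref{eq:b} and the preliminaries of Section~3 — shows $|b(\xi)| = \bigl|\fint_{O_\xi}(\bar{\eta(x)} - \bar\xi)\,m(dx)\bigr|$, and the leading term $\fint_{O_\xi}(x - \bar\xi)\,m(dx)$ is small because $O_\xi$ is nearly centered at $\bar\xi$ (the error is controlled by how far $O_\xi$ deviates from a ball centred at $\bar\xi$, which is $O(\eps(\xi))$ relative to $r^d$), while $\fint_{O_\xi}(\bar{\eta(x)} - x)\,m(dx)$ is bounded by $\eps(\xi)$; altogether $|b(\xi)| \le C\eps(\xi)$ up to rescaling, i.e.\ the normalized vector $b(\xi)/r$ has norm $\le C\eps(\xi)/r$. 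Then $c(\xi) = A^+(\xi) b(\xi)$, and since $c(\xi)$ lies in the row space of $A(\xi)$ we have $|c(\xi)| \le \|A^+(\xi)\|_{\mathrm{op}} |b(\xi)| = |b(\xi)|/s_{\min}(A(\xi))$, where $s_{\min}$ is the smallest \emph{nonzero} singular value; but $A(\xi)\,{}^tA(\xi) = r^{-2}\cdot r^4 \sum_\eta w_\eta v_\eta\otimes v_\eta$ times... more precisely $A(\xi)\,{}^t\!A(\xi)$ relates to $Q(\xi)$, and one checks $A(\xi)\,{}^t\!A(\xi) \succeq$ (constant)$\cdot Q(\xi)$ is the wrong direction — instead I use that $b(\xi)$ lies in the range of $A(\xi)$ and the relevant singular values are bounded below by those of $Q(\xi)^{1/2}$ up to $\max_\eta w_\eta^{1/2}$, combined with a lower bound on the least eigenvalue of $Q(\xi)$ restricted to its range. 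The cleanest route: $c(\xi) = A^+(\xi)b(\xi)$ with $b(\xi) \in \mathrm{Ran}(A(\xi))$, and $|c(\xi)|^2 = \langle b(\xi), (A(\xi){}^t\!A(\xi))^+ b(\xi)\rangle$; one needs a lower bound on the smallest positive eigenvalue of $A(\xi){}^t\!A(\xi) = \sum_\eta (w_\eta)^2 (\bar\eta - \check\xi)\otimes(\bar\eta-\check\xi)/(\text{stuff})$ restricted to the span of the $\bar\eta - \bar\xi$. Since that span supports $Q(\xi)$ and $q(\xi) \ge C^{-1}r^2$ forces the trace of $Q(\xi)$ to be $\gtrsim r^2$, and $\|\bar\eta - \bar\xi\| \le r$, the matrix $Q(\xi)$ — and with it $A(\xi){}^t\!A(\xi)$ up to the factor $\min_\eta w_\eta$, which is itself $\gtrsim$ (a single cell's mass)$/m(O_\xi)$ — has least positive eigenvalue $\gtrsim r^2 \cdot(\text{const})$; combining with $|b(\xi)| \le C\eps(\xi)$ gives $\max_\eta |c(\xi,\eta)| \le |c(\xi)| \le C\eps(\xi)/r$.

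\smallskip

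\textbf{Main obstacle.} The delicate point is the lower bound on the smallest \emph{positive} singular value of $A(\xi)$ (equivalently the least positive eigenvalue of $A(\xi){}^t\!A(\xi)$), because $A(\xi)$ need not have full rank $d$ and its singular values can a priori degenerate even when $q(\xi)$ is large — if the neighbour centres $\bar\eta$ cluster near a hyperplane through $\check\xi$, $Q(\xi)$ stays large but $A(\xi){}^t\!A(\xi)$ could be small. Resolving this requires showing that $A(\xi){}^t\!A(\xi)$ dominates a fixed positive multiple of $Q(\xi)$ on $\mathrm{Ran}(A(\xi)) = \mathrm{Ran}(Q(\xi))$, which in turn hinges on a uniform lower bound for $\min_{\eta\in\cN_\xi} m(\eta)/m(O_\xi)$ — this is genuinely needed and I expect the argument to invoke it as a hypothesis carried from the framework (or to extract it from a minimal-mass assumption on the partition) rather than prove it here; modulo that, the rest is the bookkeeping of Taylor remainders and ball-volume comparisons sketched above, all with constants depending only on $d$ and $c_1$.
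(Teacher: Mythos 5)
Your treatment of the two auxiliary estimates, $q(\xi)\ge C^{-1}\rho(\xi)^2$ and $|b(\xi)|\le C\eps(\xi)$, follows essentially the same route as the paper (comparison of $O_\xi$ with the balls $B(\bar\xi,\rho(\xi)\mp\eps(\xi))$, the cancellation coming from $\bar\eta$ being the center of gravity of $\eta$, and the symmetric-difference volume bound); those parts are fine, modulo bookkeeping. The problem is the last step, where you convert $|b(\xi)|\le C\eps(\xi)$ into the bound on $c(\xi,\eta)$.

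You try to bound the Euclidean norm $|c(\xi)|$ by $\|A^{+}(\xi)\|_{\mathrm{op}}\,|b(\xi)|$, and you correctly observe that this requires a lower bound on the least positive eigenvalue of $A(\xi)\,{}^{t\!}A(\xi)=\sum_{k}w_k^2\,(\overline{\eta^{(k)}}-\bar\xi)\otimes(\overline{\eta^{(k)}}-\bar\xi)$ with $w_k=m(\eta^{(k)})/m(O_\xi)$, which in turn forces you to assume $\min_k w_k\gtrsim 1$. That assumption is \emph{not} part of the framework and is not available: the partition carries no minimal-mass hypothesis, and in the motivating Voronoi example the cell masses are badly uneven. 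Since $A(\xi)\,{}^{t\!}A(\xi)\succeq(\min_k w_k)\,Q(\xi)$ is sharp in general, $\|A^{+}(\xi)\|_{\mathrm{op}}$ genuinely degenerates when some cell is tiny, so this route cannot close. The point you are missing is that the lemma only asserts a bound on $\max_\eta|c(\xi,\eta)|$, i.e.\ componentwise, and the components admit an explicit formula in which the weights cancel. Write $A(\xi)=\widetilde A(\xi)D(\xi)$, where $\widetilde A(\xi)$ has $(i,k)$ entry $(\overline{\eta^{(k)}}_i-\bar\xi_i)\sqrt{w_k}$ and $D(\xi)=\mathrm{diag}(\sqrt{w_k})$. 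Then $\widetilde A(\xi)\,{}^{t\!}\widetilde A(\xi)=Q(\xi)$, which is invertible by your own eigenvalue bound, so $\widetilde A(\xi)$ has rank $d$ and $A^{+}(\xi)=D(\xi)^{-1}\,{}^{t\!}\widetilde A(\xi)\,Q(\xi)^{-1}$. The $(k,i)$ entry of $D(\xi)^{-1}\,{}^{t\!}\widetilde A(\xi)$ is $w_k^{-1/2}\cdot(\overline{\eta^{(k)}}_i-\bar\xi_i)\,w_k^{1/2}=\overline{\eta^{(k)}}_i-\bar\xi_i$, whence
\begin{equation*}
c(\xi,\eta^{(k)})=\bigl\langle\,\overline{\eta^{(k)}}-\bar\xi,\;Q(\xi)^{-1}b(\xi)\,\bigr\rangle,\qquad
|c(\xi,\eta^{(k)})|\le\rho(\xi)\cdot\|Q(\xi)^{-1}\|\cdot|b(\xi)|\le\rho(\xi)\cdot\frac{C}{\rho(\xi)^{2}}\cdot C\eps(\xi),
\end{equation*}
which is the claimed bound with no reference to $\min_k w_k$. (The Euclidean norm $|c(\xi)|$ can indeed be as large as $\sqrt{N_\xi}$ times this, so the obstacle you flagged is real for the quantity you chose to estimate --- it is just the wrong quantity.)
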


The proof of Lemma \ref{lem:A} will be given in the next section.   In order to treat the case of $\xi \in  \partial \cK$,  we need to assume that $D$ is a $C^{1,\alpha}$-domain,  the definition of which is as follows. 
\begin{definition}\label{defn:Dc1a}
Let $\alpha \in (0,1].$ We say that $D$ is a $C^{1,\alpha}$-domain if  there exist $C>0$ and $R>0$ such that for any $x \in \partial D$,   there exist a $C^{1,\alpha}$-function $F_x \colon \R^{d-1} \to \R$ satisfying $F_x(0)=0$, $\nabla F_x(0)=0$,  $\sup_{y' \in \R^{d-1}}|\nabla F_x(y')|\le C$,  \[|\nabla F_x (y')-\nabla F_x (z')|\le C|y'-z'|^\alpha ,\quad y',z' \in \R^{d-1},\] and an orthonormal coordinate system $CS_x$ with its origin at $x$ such that
\begin{equation}\label{eq:Dcs}
B(x,R) \cap D=\{y=(y',y_d) \in B(0,R) \text{ in $CS_x$} \mid y_d>F_x(y')\},
\end{equation}
where $y'=(y_1,\ldots,y_{d-1})$.
\end{definition}

If $D$ is a $C^{1,\alpha}$-domain for some $\alpha \in (0,1]$,  it satisfies the uniform cone condition (see,  e.g., \cite[Chapter~4]{AF}). Thus, there exists $\kappa \in (0,\pi]$ with the following property: for any $\xi \in \partial \cK$ there exists a (closed) cone in $\cl{D}$ of aperture angle $\kappa$ with vertex at $\hat{\xi}$.  See \cite[Section~4.4]{AF} for definitions of these terminologies. We fix such a positive constant $\kappa$.  Let $c \in (0,1)$ and assume $\sup_{\xi \in \partial \cK}\eps(\xi)/\rho(\xi) \le c$. Let $\xi \in \partial \cK$. Lemma~\ref{lem:edd} below implies that $\cl{D} \cap B(\hat{\xi},(1-c)\rho(\xi)) \subset O_\xi$. Thus, there exists a cone $C$ of height $(1-c)\rho(\xi)$, aperture angle $\kappa$ with vertex at $\hat{\xi}$ such that $C \subset O_\xi$. On account of this fact, we fix a positive number $R_D$ independent of $\xi$ such that 
\[
R_D >\inf \left\{r>0 \mid \text{$B(e,\rho(\xi)/r) \subset O_\xi$ for some $e \in O_\xi$}\right\}.
\]
Define $a_2 \colon (0,\infty) \to \R$ by 
\[
a_2(t)=\frac{ R_{D}^{-(d+2)}}{d+2}\frac{1}{(1+t)^d} -t^2 -2t
\]
and fix $c_2\in (0,c \wedge \min\{t>0 \mid a_2(t)=0\}).$

\begin{lemma}\label{lem:B}
Assume that $D$ is a $C^{1,\alpha}$-domain for some $\alpha \in (0,1]$.  Assume in addition that 
\begin{equation}\label{eq:B}
\sup_{\xi \in  \partial \cK}\frac{\eps(\xi)}{\rho(\xi)}\le c_2.
\end{equation}
Then,  there exists $C>0$ depending only on $D$ and $c_2$ such that for any $\xi \in \partial \cK$,
\[
\max_{\eta \in \mathcal{N}_{\xi}}|c(\xi,\eta)|  \le C\left(\frac{ \eps(\xi)}{\rho(\xi)}+\rho(\xi)^\alpha \right) \quad  \text{and }\quad q(\xi) \ge C^{-1}\rho(\xi)^2.
\]
\end{lemma}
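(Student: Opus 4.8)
The plan is to follow the same structure as the proof of Lemma~\ref{lem:A} (which I would consult as the template), but now carrying along the extra ingredients coming from the boundary: the definition of $b(\xi)$ for $\xi \in \partial\cK$ involves the correction $-\beta_d\rho(\xi)\nu(\hat\xi)$, the neighbor ball is centered at $\hat\xi = \check\xi$, and the lower estimate for $q(\xi)$ must use a cone in $O_\xi$ rather than a full ball. Concretely, by the identity $c(\xi) = A^+(\xi)b(\xi)$ and the fact that $\|A^+(\xi)\|$ is controlled by the reciprocal of the smallest nonzero singular value of $A(\xi)$, I would bound $\max_\eta |c(\xi,\eta)| \le \|A^+(\xi)\|\,|b(\xi)|$ and estimate the two factors separately. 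For the lower bound on the smallest singular value of $A(\xi)$ — equivalently, a lower bound on $v^{\mathsf t} A(\xi)A(\xi)^{\mathsf t} v$ for unit $v\in\R^d$ restricted to the range of $A(\xi)$ — I would compare $A(\xi)A(\xi)^{\mathsf t}$ with $Q(\xi)$ (note $A(\xi)A(\xi)^{\mathsf t} = \sum_\eta (\bar\eta-\check\xi)\otimes(\bar\eta-\check\xi)\,(m(\eta)/m(O_\xi))^2$, which is comparable to $Q(\xi)$ up to a factor controlled by $\min_\eta m(\eta)/m(O_\xi)$; here I would need a lower bound on the individual weights, or argue directly with $Q$ via the generalized inverse identities). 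The key quantitative input is $q(\xi)\ge C^{-1}\rho(\xi)^2$, which I would prove exactly as the analogous statement in Lemma~\ref{lem:A}: since $\cl D\cap B(\hat\xi,(1-c_2)\rho(\xi))\subset O_\xi$ by Lemma~\ref{lem:edd}, $O_\xi$ contains a cone of height $(1-c_2)\rho(\xi)$ and aperture $\kappa$ with vertex $\hat\xi$, hence a ball $B(e,\rho(\xi)/R_D)\subset O_\xi$; averaging $|y-\check\xi|^2$ over this ball and using the definition of $a_2$ together with $c_2 < \min\{t>0 : a_2(t)=0\}$ gives the trace lower bound, and by symmetry of the cone construction a matching lower bound on the smallest eigenvalue of $Q(\xi)$, whence $\|A^+(\xi)\|\le C/\rho(\xi)$ (after accounting for the weights $m(\eta)/m(O_\xi)$, which are bounded below by a $d$-dependent constant times $(\eps(\xi)/\rho(\xi))^d$ — a point to handle carefully, possibly by working with $Q$ throughout rather than $AA^{\mathsf t}$).

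The genuinely new estimate is the upper bound on $|b(\xi)|$ for boundary cells, and this is where the $C^{1,\alpha}$ hypothesis and the constant $\beta_d$ enter. Writing $S_\xi := \sum_{\eta\in\cN_\xi}(\bar\eta-\hat\xi)\,m(\eta)/m(O_\xi)$, I must show $|S_\xi - \beta_d\rho(\xi)\nu(\hat\xi)| \le C\rho(\xi)(\eps(\xi)/\rho(\xi) + \rho(\xi)^\alpha)$. The strategy is to replace the discrete sum $S_\xi$ by the continuous average $\fint_{O_\xi}(x-\hat\xi)\,m(dx)$ — the difference is $O(\eps(\xi))$ by the usual Taylor/diameter argument (each cell $\eta$ has diameter $\le 2\eps(\xi)$ and $\bar\eta$ is its barycenter, and one also pays for the mismatch between $O_\xi$ and $\bigcup\{\eta : \bar\eta\in B(\check\xi,\rho(\xi))\}$, controlled via $\widetilde{\cN}_\xi$ and \eqref{eq:N2}) — and then replace $O_\xi$ by the half-ball $\{x\in B(\hat\xi,\rho(\xi)) : \langle x-\hat\xi,\nu(\hat\xi)\rangle > 0\}$ in the coordinate system $CS_{\hat\xi}$. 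Over the exact half-ball, $\fint(x-\hat\xi) = \beta_d\rho(\xi)\nu(\hat\xi)$ by direct integration (this is precisely the computation referenced in the footnote, $\beta_d = 2\omega_{d-1}/((d+1)\omega_d)$). The error from replacing $O_\xi$ by the half-ball is governed by the symmetric difference of the two regions inside $B(\hat\xi,\rho(\xi))$; since $\partial D$ near $\hat\xi$ is the graph $y_d = F_{\hat\xi}(y')$ with $F_{\hat\xi}(0)=0$, $\nabla F_{\hat\xi}(0)=0$, and $\alpha$-Hölder gradient, one has $|F_{\hat\xi}(y')| \le C|y'|^{1+\alpha}$, so that symmetric difference has measure $\le C\rho(\xi)^{d-1}\cdot\rho(\xi)^{1+\alpha} = C\rho(\xi)^{d+1+\alpha}$; dividing by $m(O_\xi)\asymp \rho(\xi)^d$ and by the additional factor $\rho(\xi)$ absorbed into $|x-\hat\xi|\le\rho(\xi)$ contributes $O(\rho(\xi)^{1+\alpha})$, i.e. $O(\rho(\xi)\cdot\rho(\xi)^\alpha)$. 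Combining, $|b(\xi)| \le C\rho(\xi)(\eps(\xi)/\rho(\xi)+\rho(\xi)^\alpha)$, and multiplying by $\|A^+(\xi)\|\le C/\rho(\xi)$ yields the claimed bound on $\max_\eta|c(\xi,\eta)|$.

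The main obstacle I anticipate is the interplay between the $C^{1,\alpha}$ boundary geometry and the discrete averages in controlling $|b(\xi)|$ uniformly in $\xi$: one must carefully track (i) that $O_\xi$ is squeezed between $\cl D\cap B(\hat\xi,(1-c_2)\rho(\xi))$ and $\cl D\cap B(\hat\xi,(1+c_2)\rho(\xi))$ using Lemma~\ref{lem:edd} and \eqref{eq:B}, so that replacing $O_\xi$ by the truncated graph region costs only $O(c_2\rho(\xi))$ plus a $C^{1,\alpha}$-curvature term, and (ii) that the lower bound on $q(\xi)$ (hence on $\|A^+(\xi)\|^{-1}$) genuinely uses the cone/ball inclusion rather than a full ball — the constant $R_D$ and the definition of $a_2$ are tailored to exactly this. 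A secondary technical point is passing from a lower bound on $q(\xi) = \frac1d\mathrm{Tr}[Q(\xi)]$ to a lower bound on the smallest eigenvalue of $Q(\xi)$ on the relevant subspace; here the cone's rotational symmetry about the axis $\nu(\hat\xi)$, together with the fact that the cone has positive opening, should give comparability of all eigenvalues of the "cone contribution" to $Q(\xi)$ up to $d$-dependent constants, which suffices. I expect most of the remaining steps — the Taylor expansions, the diameter bounds via $\eps(\xi)$, and the bookkeeping with $\cN_\xi$ versus $\widetilde{\cN}_\xi$ — to be routine adaptations of the proof of Lemma~\ref{lem:A}, with the boundary estimates collected in Appendix~A (in particular \eqref{eq:uph1}) doing the heavy lifting for the half-ball integral identity.
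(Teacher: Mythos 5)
Your treatment of the two genuinely geometric estimates matches the paper's. The lower bound $q(\xi)\ge C^{-1}\rho(\xi)^2$ (and in fact $\langle u,Q(\xi)u\rangle\ge C^{-1}\rho(\xi)^2$ for every unit $u$) is obtained in Lemma~\ref{lem:span22} exactly as you describe: the inscribed ball $B(e,\rho(\xi)/R_D)\subset O_\xi$ coming from the cone condition, the inequality $\int_{B}\langle y-\hat\xi,u\rangle^2\,m(dy)\ge \frac1d\int_{B}|y-e|^2\,m(dy)$ (the cross term vanishes because $e$ is the centre of $B$, so no appeal to rotational symmetry of the cone is needed --- the ball alone gives the smallest-eigenvalue bound in every direction), and the error terms $\eps(\xi)^2+2\eps(\xi)\rho(\xi)$ from Lemma~\ref{lem:span2} absorbed by $a_2$ and the choice of $c_2$. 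Likewise the bound $|b(\xi)|\le C\rho(\xi)\bigl(\eps(\xi)/\rho(\xi)+\rho(\xi)^\alpha\bigr)$ is proved by the same two comparisons you propose ($O_\xi$ versus $B_D(\hat\xi,\rho)$ via Lemma~\ref{lem:trngl}~(2), then $B_D$ versus the half-ball via the graph bound $|F_{\hat\xi}(y')|\le C|y'|^{1+\alpha}$ and the half-ball identity, Lemma~\ref{lem:ub}). Two small corrections: the replacement of the discrete sum by $\fint_{O_\xi}(y-\hat\xi)\,m(dy)$ is an exact identity (since $\bar\eta\,m(\eta)=\int_\eta y\,m(dy)$ and $O_\xi=\bigcup_{\eta\in\mathcal{N}_\xi}\eta$ by definition), not an $O(\eps)$ approximation; and your intermediate exponent $\rho^{d+1+\alpha}$ for the symmetric difference should read $\rho^{d+\alpha}$, though the final $O(\rho^{1+\alpha})$ you state is correct.

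The genuine gap is the step converting these into the componentwise bound on $c(\xi,\eta)$. Your route via $\max_\eta|c(\xi,\eta)|\le\|A^{+}(\xi)\|\,|b(\xi)|$ with $\|A^{+}(\xi)\|$ controlled by the smallest singular value of $A(\xi)$ does not close: $A(\xi)\,{}^{\,t\!}A(\xi)=\sum_\eta(\bar\eta-\hat\xi)\otimes(\bar\eta-\hat\xi)\,(m(\eta)/m(O_\xi))^2$ is bounded below by $Q(\xi)$ only up to the factor $\min_\eta m(\eta)/m(O_\xi)$, and nothing in the hypotheses bounds individual cell measures from below ($\eps(\xi)$ bounds cell diameters from above, so your parenthetical claim that the weights are bounded below by a constant times $(\eps(\xi)/\rho(\xi))^d$ is false). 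Without such a bound one only gets $\|A^{+}(\xi)\|\le C\sqrt{N_\xi}/\rho(\xi)$, and $N_\xi$ is unbounded. The paper's resolution (Section~3, \eqref{eq:a+} and \eqref{eq:a+++}) is the weighted factorization $A(\xi)=\widetilde A(\xi)D(\xi)$ with column weights $\sqrt{m(\eta)/m(O_\xi)}$, for which $\widetilde A(\xi)\,{}^{\,t\!}\widetilde A(\xi)=Q(\xi)$ exactly and the weights cancel in $D(\xi)^{-1}\,{}^{\,t\!}\widetilde A(\xi)$, yielding $c(\xi,\eta^{(k)})=\langle\overline{\eta^{(k)}}-\hat\xi,\,Q(\xi)^{-1}b(\xi)\rangle$ and hence $|c(\xi,\eta^{(k)})|\le\rho(\xi)\,\|Q(\xi)^{-1}\|\,|b(\xi)|\le C\bigl(\eps(\xi)/\rho(\xi)+\rho(\xi)^\alpha\bigr)$. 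Your hedge (``work with $Q$ throughout'') points in the right direction, but this cancellation is the ingredient that has to be supplied.
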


The proof  of Lemma \ref{lem:B} is postponed until the next section.   Assume that $\sup_{\xi \in \cK}\eps(\xi)/\rho(\xi)$ and $\sup_{\xi \in \cK}\rho(\xi)$ are sufficiently small and $\inf_{\xi \in \cK}\rho(\xi)>0$.  Then,   Lemmas~\ref{lem:A} and \ref{lem:B} imply that
\begin{equation}\label{eq:cflg1}
\inf_{\xi \in \cK} q(\xi)>0\text{ and } |c(\xi,\eta)|<1 \text{ for any $\xi \in \cK$ and $\eta \in \mathcal{N}_\xi$.}
\end{equation}
Note that $\inf_{\xi \in \cK}\rho(\xi)>0$ holds automatically if $D$ is bounded.   We  endow $\cK$ with the discrete topology and let  $\mathcal{B}_b(\cK)$  denote the space of bounded Borel functions on $\cK$ equipped with the sup-norm.   When \eqref{eq:cflg1} holds,  the operator $L$ defined in \eqref{eq:gener} is a bounded linear operator on $\mathcal{B}_b(\cK)$.  Also, the coefficients $1-c(\xi,\eta)$ in $L$ are all positive.  Thus,  $(L,\mathcal{B}_b(\cK))$ generates a continuous-time Markov chain on $\cK$  whose semigroup is given by $\{e^{tL}\}_{t>0}$ (see,  e.g., \cite[Chapter~4.2]{EK}).

In the following,  we see  how this generator $L$ approximates the Laplace operator $\Delta$ on $\R^d$.
Define a map $\pi\colon C(\R^d) \to \R^{\cK}$  by
\[
\pi f(\xi)=f(\bar{\xi}).
\]
Here,  $ C(\R^d) $ denotes the space of all continuous  real-valued functions on $\R^d$.   We also denote by $C^2(\R^d)$ the space of all twice  continuously differentiable  real-valued  functions on $\R^d.$
\begin{theorem}\label{thm:1}
Assume that \eqref{eq:A} holds.  Then, there exists $C>0$ depending only on $d$ and $c_1$ such that   for any  $f \in C^2(\R^d)$ and  $\xi \in \cK \setminus \partial \cK$,
\begin{align*}
&\left| L(\pi f)(\xi)- \pi \left(\frac{\Delta f}{2}\right)(\xi) \right| \notag \\
&\le  \frac{C\eps(\xi)}{\rho(\xi)} \sup_{y \in B(\bar{\xi},\rho(\xi))}\|\nabla^2 f(y)\| +C \sup_{y \in B(\bar{\xi},\rho(\xi))} \|\nabla^2 f(y)-\nabla^2 f(\bar{\xi})\|.
\end{align*}
\end{theorem}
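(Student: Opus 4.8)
The plan is to expand $L(\pi f)(\xi)$ via Taylor's formula to second order around $\bar\xi$ and track the three resulting contributions: a first-order term, a ``Hessian-at-center'' term that should produce $\tfrac12\Delta f(\bar\xi)$ after division by $q(\xi)$, and a remainder controlled by the modulus of continuity of $\nabla^2 f$. Fix $\xi\in\cK\setminus\partial\cK$, so $\check\xi=\bar\xi$ and $q(\xi)=\tfrac1d\mathrm{Tr}[Q(\xi)]$ with $Q(\xi)=\sum_{\eta\in\cN_\xi}(\bar\eta-\bar\xi)\otimes(\bar\eta-\bar\xi)\,m(\eta)/m(O_\xi)$. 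For each $\eta\in\cN_\xi$ write $v_\eta=\bar\eta-\bar\xi$, so $|v_\eta|<\rho(\xi)$, and expand
\[
f(\bar\eta)-f(\bar\xi)=\langle\nabla f(\bar\xi),v_\eta\rangle+\int_0^1(1-t)\langle v_\eta,\nabla^2 f(\bar\xi+tv_\eta)v_\eta\rangle\,dt.
\]
Splitting the integrand as $\nabla^2 f(\bar\xi)+(\nabla^2 f(\bar\xi+tv_\eta)-\nabla^2 f(\bar\xi))$ decomposes $L(\pi f)(\xi)$ into (I) $\frac1{q(\xi)}\sum_\eta\langle\nabla f(\bar\xi),v_\eta\rangle(1-c(\xi,\eta))\,m(\eta)/m(O_\xi)$; (II) $\frac1{2q(\xi)}\sum_\eta\langle v_\eta,\nabla^2 f(\bar\xi)v_\eta\rangle(1-c(\xi,\eta))\,m(\eta)/m(O_\xi)$; and (III) an error term bounded by $\frac1{q(\xi)}\sup_{y\in B(\bar\xi,\rho(\xi))}\|\nabla^2 f(y)-\nabla^2 f(\bar\xi)\|\cdot\sum_\eta|v_\eta|^2(1+|c(\xi,\eta)|)\,m(\eta)/m(O_\xi)$.

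The heart of the argument is to show (I) vanishes and (II) equals $\tfrac12\Delta f(\bar\xi)$ up to the stated error. For (I): by definition of $A(\xi)$ in \eqref{eq:A1}, the vector $\sum_\eta v_\eta(1-c(\xi,\eta))\,m(\eta)/m(O_\xi)$ equals $A(\xi)\mathbf 1 - A(\xi)c(\xi)$ (with $\mathbf1$ the all-ones vector in $\R^{N_\xi}$), and $A(\xi)\mathbf1=b(\xi)$ for $\xi\notin\partial\cK$ by \eqref{eq:b}. Since $c(\xi)=A^+(\xi)b(\xi)$, we get $A(\xi)c(\xi)=A(\xi)A^+(\xi)b(\xi)$, which equals $b(\xi)$ provided $b(\xi)\in\mathrm{range}(A(\xi))$ — this is the one subtlety, and I expect it follows because $b(\xi)=A(\xi)\mathbf1$ is manifestly in the range. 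Hence (I)$=0$ exactly. For (II), note $\sum_\eta\langle v_\eta,\nabla^2 f(\bar\xi)v_\eta\rangle\,m(\eta)/m(O_\xi)=\mathrm{Tr}[\nabla^2 f(\bar\xi)\,Q(\xi)]$. Here we must \emph{also} insert the $-c(\xi,\eta)$ part: the correction contributes $-\sum_\eta c(\xi,\eta)\langle v_\eta,\nabla^2 f(\bar\xi)v_\eta\rangle\,m(\eta)/m(O_\xi)$, which is bounded in absolute value by $\|\nabla^2 f(\bar\xi)\|\max_\eta|c(\xi,\eta)|\sum_\eta|v_\eta|^2 m(\eta)/m(O_\xi)\le\|\nabla^2 f(\bar\xi)\|\,(C\eps(\xi)/\rho(\xi))\,d\,q(\xi)$ using Lemma~\ref{lem:A} and $\sum_\eta|v_\eta|^2 m(\eta)/m(O_\xi)=\mathrm{Tr}[Q(\xi)]=d\,q(\xi)$. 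Dividing by $q(\xi)$ turns this into a term of size $\lesssim(\eps(\xi)/\rho(\xi))\|\nabla^2 f(\bar\xi)\|$, matching the first term on the right-hand side.

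It remains to handle the genuine main term $\frac1{2q(\xi)}\mathrm{Tr}[\nabla^2 f(\bar\xi)Q(\xi)]$ and compare it to $\tfrac12\Delta f(\bar\xi)=\tfrac1{2}\mathrm{Tr}[\nabla^2 f(\bar\xi)]$. Writing $Q(\xi)=q(\xi)I_d+(Q(\xi)-q(\xi)I_d)$, the first piece gives exactly $\tfrac12\Delta f(\bar\xi)$, and the discrepancy is $\frac1{2q(\xi)}\mathrm{Tr}[\nabla^2 f(\bar\xi)(Q(\xi)-q(\xi)I_d)]$, bounded by $\frac{d}{2q(\xi)}\|\nabla^2 f(\bar\xi)\|\,\|Q(\xi)-q(\xi)I_d\|$. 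So I need $\|Q(\xi)-q(\xi)I_d\|\lesssim\eps(\xi)\rho(\xi)$ (equivalently $\lesssim q(\xi)\eps(\xi)/\rho(\xi)$, using $q(\xi)\asymp\rho(\xi)^2$ from Lemma~\ref{lem:A}). This is the estimate I expect to be the main technical obstacle: one must show the empirical second-moment matrix of the neighbor centers is nearly isotropic, with anisotropy controlled by $\eps(\xi)$. The idea is that $Q(\xi)$ differs from $\fint_{B(\bar\xi,\rho(\xi))}(y-\bar\xi)\otimes(y-\bar\xi)\,m(dy)=\frac{\rho(\xi)^2}{d+2}I_d$ only by errors coming from (a) replacing the union $O_\xi$ of cells by the exact ball $B(\bar\xi,\rho(\xi))$, and (b) replacing each point $y\in\eta$ by $\bar\eta$; both discrepancies involve points within distance $\eps(\xi)$ of the respective sets, so a volume/first-moment comparison gives an $O(\eps(\xi)\rho(\xi)^{?})$ bound — this is presumably exactly what is packaged into the proof of Lemma~\ref{lem:A} and can be reused. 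Assembling (I)$=0$, the bound on the $c(\xi,\eta)$-correction to (II), the near-isotropy estimate, the remainder bound (III) together with $1/q(\xi)\lesssim\rho(\xi)^{-2}$ and $\sum_\eta|v_\eta|^2 m(\eta)/m(O_\xi)=d\,q(\xi)$ (so (III)$\lesssim\sup_{B(\bar\xi,\rho(\xi))}\|\nabla^2 f-\nabla^2 f(\bar\xi)\|$), and finally replacing $\|\nabla^2 f(\bar\xi)\|$ by $\sup_{y\in B(\bar\xi,\rho(\xi))}\|\nabla^2 f(y)\|$, yields the claimed inequality with a constant depending only on $d$ and $c_1$.
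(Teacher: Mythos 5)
Your proposal is correct and follows the same overall strategy as the paper: Taylor expansion around $\bar{\xi}$, exact cancellation of the first-order term via the defining property of the corrector, near-isotropy of the second-moment matrix to produce $\tfrac12\Delta f(\bar{\xi})$, and $q(\xi)\asymp\rho(\xi)^2$ to normalize. Two points of genuine (if minor) divergence are worth recording. First, your pivot for the second-order term is the discrete matrix $Q(\xi)$, whereas the paper converts the discrete sum into the continuous average $\widetilde{Q}(\xi)=\fint_{O_\xi}(y-\bar{\xi})\otimes(y-\bar{\xi})\,m(dy)$ (Lemma~\ref{lem:I4}) and then invokes $\|\widetilde{Q}(\xi)/q(\xi)-I_d\|\le C\eps(\xi)/\rho(\xi)$ (Lemma~\ref{lem:Q/q}). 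Your route therefore needs $\|Q(\xi)-q(\xi)I_d\|\lesssim\eps(\xi)\rho(\xi)$, which you correctly flag as the technical crux; it does follow from the paper's ingredients, but note that Lemma~\ref{lem:qtq} compares $Q$ and $\widetilde{Q}$ only at the level of traces, so you must extend that center-of-gravity argument entrywise (which works verbatim: each entry of $Q-\widetilde{Q}$ is $\fint_\eta(y_i-\bar{\xi}_i)(\bar{\eta}_j-y_j)\,m(dy)$-type and is $O(\eps\rho)$) and then combine with the $\widetilde{Q}$-versus-$\frac{\rho^2}{d+2}I_d$ and $q$-versus-$\frac{\rho^2}{d+2}$ bounds from Lemmas~\ref{lem:Q/q} and \ref{lem:qr}. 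Second, your justification of $A(\xi)A^{+}(\xi)b(\xi)=b(\xi)$ via $b(\xi)=A(\xi)\mathbf{1}\in\mathrm{range}(A(\xi))$ (indeed $AA^{+}A\mathbf{1}=A\mathbf{1}$ by the Penrose identities) is more elementary than the paper's argument, which derives it from the full-rank statement of Lemma~\ref{lemma:span}; this buys you a cancellation that does not depend on nondegeneracy of $Q(\xi)$, though the full-rank fact is still needed elsewhere to control $|c(\xi,\eta)|$.
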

Define  a subspace $C_{\text{Neu}}^2(\R^d)$ of $C^2(\R^d)$ as
\[
C_{\text{Neu}}^2(\R^d)=\left\{f \in C^2(\mathbb{R}^d) \relmiddle| \langle \nabla f(x),\nu(x) \rangle=0\text{ for every $x \in \partial D$} \right\}.
\]
\begin{theorem}\label{thm:2}
Assume that $D$ is a $C^{1,\alpha}$-domain for some $\alpha \in (0,1]$.  Assume in addition  that \eqref{eq:B} holds.  Then,  there exists $C>0$ depending only on $D$ and $c_2$ such that for any $f \in C_{\text{\rm Neu}}^2(\R^d)$  and  $\xi \in \partial \cK$ with $\rho(\xi)<R$,
\begin{align*}
\left| L(\pi f)(\xi)- \pi \left(\frac{\Delta f}{2} \right)(\xi)  \right| &\le C\left(\frac{ \eps(\xi) \vee \dl(\xi)}{\rho(\xi)}+\rho(\xi)^\alpha \right) \sup_{y \in B(\hat{\xi},\rho(\xi))}\|\nabla^2 f(y)\| \\
&\quad+C \sup_{y \in B(\hat{\xi},\rho(\xi))} \|\nabla^2 f(y)-\nabla^2 f(\hat{\xi}) \|.
\end{align*} 
Here,  $R$ is the positive constant in Definition~\ref{defn:Dc1a}.
\end{theorem}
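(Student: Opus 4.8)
The plan is to mimic the Taylor-expansion computation from the introduction, but carried out at the boundary point $\hat\xi$ rather than at $\bar\xi$, and to exploit the definition of $b(\xi)$ and $c(\xi)$ on $\partial\cK$ together with the Neumann condition to kill the first-order term. Fix $\xi \in \partial\cK$ with $\rho(\xi)<R$ and $f \in C_{\mathrm{Neu}}^2(\R^d)$. Writing $L(\pi f)(\xi) = \frac{1}{q(\xi)}\sum_{\eta\in\cN_\xi}(f(\bar\eta)-f(\bar\xi))(1-c(\xi,\eta))\frac{m(\eta)}{m(O_\xi)}$, I first replace $f(\bar\xi)$ by $f(\hat\xi)$ at the cost of a term controlled by $|\nabla f(\hat\xi)|\,\dl(\xi)$ plus a second-order remainder (here $|\bar\xi-\hat\xi|=\mathrm{dist}(\bar\xi,\partial D)\le\dl(\xi)$, which is why $\dl(\xi)/\rho(\xi)$ enters the estimate). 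Then I apply Taylor's formula to $f(\bar\eta)-f(\hat\xi)$ with basepoint $\hat\xi$, getting a first-order term $\langle\nabla f(\hat\xi),\bar\eta-\hat\xi\rangle$, a ``pure Hessian'' term $\frac12\langle\bar\eta-\hat\xi,\nabla^2 f(\hat\xi)(\bar\eta-\hat\xi)\rangle$, and an integral remainder measuring the oscillation of $\nabla^2 f$ over $B(\hat\xi,\rho(\xi))$.

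Next I treat the three resulting sums. The Hessian term, after summing against $(1-c(\xi,\eta))\frac{m(\eta)}{m(O_\xi)}$ and dividing by $q(\xi)$, should produce $\frac12\mathrm{Tr}[\nabla^2 f(\hat\xi)\,\widetilde Q(\xi)]/q(\xi)$ for a corrected second-moment matrix $\widetilde Q(\xi)$; I will need to show $\widetilde Q(\xi)/q(\xi)$ is close to $I_d$, which is exactly the kind of estimate Lemma~\ref{lem:B} provides together with the lower bound $q(\xi)\ge C^{-1}\rho(\xi)^2$ and $\max_\eta|c(\xi,\eta)|\le C(\eps(\xi)/\rho(\xi)+\rho(\xi)^\alpha)$; the difference between $Q(\xi)$ computed with $\bar\eta$ versus its ``correct'' value and the $c$-weights contribute terms of size $\eps(\xi)/\rho(\xi)+\rho(\xi)^\alpha$ times $\|\nabla^2 f(\hat\xi)\|$, hence the first line of the claimed bound. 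The oscillation remainder is directly bounded by $C\sup_{y\in B(\hat\xi,\rho(\xi))}\|\nabla^2 f(y)-\nabla^2 f(\hat\xi)\|$ after using $q(\xi)\ge C^{-1}\rho(\xi)^2$ and $|\bar\eta-\hat\xi|\le\rho(\xi)+\eps(\xi)\lesssim\rho(\xi)$. The crucial first-order sum is $\frac{1}{q(\xi)}\langle\nabla f(\hat\xi),\,\sum_{\eta\in\cN_\xi}(\bar\eta-\hat\xi)(1-c(\xi,\eta))\frac{m(\eta)}{m(O_\xi)}\rangle$; by the defining relation $A(\xi)c(\xi)=A(\xi)A^+(\xi)b(\xi)$ and the fact that $b(\xi)$ lies in the range of $A(\xi)$ (this needs to be verified, presumably part of the setup or Lemma~\ref{lem:B}), the $c$-correction cancels $\sum_\eta(\bar\eta-\hat\xi)\frac{m(\eta)}{m(O_\xi)}$ up to exactly $\beta_d\rho(\xi)\nu(\hat\xi)$, leaving $\frac{\beta_d\rho(\xi)}{q(\xi)}\langle\nabla f(\hat\xi),\nu(\hat\xi)\rangle=0$ by the Neumann condition.

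The main obstacle I anticipate is the normal-direction analysis: one must show that the geometric sum $\sum_{\eta\in\cN_\xi}(\bar\eta-\hat\xi)\frac{m(\eta)}{m(O_\xi)}$ agrees, up to an error of order $(\eps(\xi)/\rho(\xi)+\rho(\xi)^\alpha)\rho(\xi)$, with $\beta_d\rho(\xi)\nu(\hat\xi)$ — i.e.\ that $O_\xi$ looks like a half-ball of radius $\rho(\xi)$ centered at $\hat\xi\in\partial D$, with the half-ball cut by the tangent hyperplane, so that its center of mass relative to $\hat\xi$ is $\beta_d\rho(\xi)\nu(\hat\xi)$. This is where the $C^{1,\alpha}$ regularity of $\partial D$ is essential: the boundary graph $F_{\hat\xi}$ satisfies $F_{\hat\xi}(0)=0$, $\nabla F_{\hat\xi}(0)=0$, and $|\nabla F_{\hat\xi}(y')|\le C|y'|^\alpha$, so the region $B(\hat\xi,\rho(\xi))\cap D$ differs from a flat half-ball by a set of measure $O(\rho(\xi)^{d+\alpha})$, and also $O_\xi$ differs from $B(\hat\xi,\rho(\xi))\cap\cl D$ by cells near the sphere, of total measure $O(\eps(\xi)\rho(\xi)^{d-1})$. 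These are precisely the boundary estimates relegated to Appendix~A (cf.\ the footnote identifying $\beta_d=2\omega_{d-1}/((d+1)\omega_d)$, which is the normalized center of mass of a unit half-ball along its axis), so I would cite Lemma~\ref{lem:edd} and the appendix computations, assemble the three contributions, and collect the error terms into the stated form.
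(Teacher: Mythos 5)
Your overall architecture coincides with the paper's: Taylor expansion at $\hat{\xi}$, cancellation of the first-order sum via $A(\xi)A^{+}(\xi)b(\xi)=b(\xi)$ (which indeed requires $A(\xi)$ to have rank $d$; the paper gets this from the positive-definiteness of $Q(\xi)=\widetilde{A}(\xi)\,{}^{t}\widetilde{A}(\xi)$ in Lemma~\ref{lem:span22}), the identity $\sum_{\eta}(\bar{\eta}-\hat{\xi})(1-c(\xi,\eta))\frac{m(\eta)}{m(O_\xi)}=\beta_d\rho(\xi)\nu(\hat{\xi})$ killed by the Neumann condition, the comparison $\widetilde{Q}(\xi)/q(\xi)\approx I_d$ via the half-ball moment computations of Appendix~A, and the measure estimates $m(O_\xi\bigtriangleup B_D(\hat{\xi},\rho))\lesssim \eps\rho^{d-1}$ and $m(B_D(\hat{\xi},\rho)\bigtriangleup B_{+}(\hat{\xi},\rho))\lesssim \rho^{d+\alpha}$. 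All of that is sound and is essentially Lemmas~\ref{lem:I322}, \ref{lem:I4}, \ref{lem:I322b}--\ref{lem:QL} of the paper.

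There is, however, one genuine gap: your treatment of the replacement $f(\bar{\xi})\to f(\hat{\xi})$. You propose to bound the resulting first-order term by $|\nabla f(\hat{\xi})|\,\dl(\xi)$ and let it contribute to the $\dl(\xi)/\rho(\xi)$ factor. But this term enters $L(\pi f)(\xi)$ with total weight $\Lambda_0(\xi)=\sum_{\eta}(1-c(\xi,\eta))\frac{m(\eta)}{m(O_\xi)}\approx 1$ and is then divided by $q(\xi)\asymp\rho(\xi)^2$, so it produces an error of order $|\nabla f(\hat{\xi})|\,\dl(\xi)/\rho(\xi)^2$ --- a quantity involving $|\nabla f|$ that is not dominated by the theorem's right-hand side, which contains only $\|\nabla^2 f\|$ factors. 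The term cannot be estimated; it must vanish exactly. The point (which the paper uses when observing that $\Lambda_1(\xi)$ is a constant multiple of $\nu(\hat{\xi})$) is that $\hat{\xi}$ realizes $\mathrm{dist}(\bar{\xi},\partial D)$ on the $C^1$ boundary, hence $\bar{\xi}-\hat{\xi}$ is parallel to $\nu(\hat{\xi})$, so $\langle\nabla f(\hat{\xi}),\bar{\xi}-\hat{\xi}\rangle=0$ for $f\in C^2_{\mathrm{Neu}}(\R^d)$ --- a second application of the Neumann condition beyond the one you make for $\beta_d\rho(\xi)\nu(\hat{\xi})$. What actually generates the $\dl(\xi)/\rho(\xi)$ factor in the final bound is the surviving second-order piece $\tfrac12\Lambda_0(\xi)\langle\bar{\xi}-\hat{\xi},\nabla^2 f(\hat{\xi})(\bar{\xi}-\hat{\xi})\rangle$, of size $\dl(\xi)^2\|\nabla^2 f(\hat{\xi})\|$, which after division by $q(\xi)\gtrsim\rho(\xi)^2$ and the inequality $\dl(\xi)<\rho(\xi)$ yields $(\dl(\xi)/\rho(\xi))\|\nabla^2 f(\hat{\xi})\|$. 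With this correction your argument closes and is the paper's proof.
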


In what follows,  we assume that $D$ is a $C^{1,\alpha}$-domain for some $\alpha \in (0,1]$.  Let $\{\cK^{(n)}\}_{n=1}^\infty$ be a sequence of partitions  of $\cl{D}$.  Let $n \in \N$ and fix a positive function $\dl_n \colon \cK^{(n)} \to (0,\infty)$  to define
\[
\partial \mathcal{K}^{(n)}=\{\xi \in \mathcal{K}^{(n)} \mid B(\bar{\xi},\dl_n(\xi)) \not \subset D\}.
\]
 For each $\xi \in \partial \mathcal{K}^{(n)}$,  we  fix $\hat{\xi} \in \partial D$ such that 
$|\bar{\xi}-\hat{\xi} |=\text{dist}(\bar{\xi},\partial D).$  
 We also take $\rho_n \colon \cK^{(n)} \to (0,\infty)$ satisfying
\begin{align*}
\rho_n(\xi)&
\left\{
    \begin{alignedat}{2}
&=\dl_n(\xi)  &\quad&   \text{ if }\quad \xi \in \cK^{(n)} \setminus \partial \cK^{(n)} ,  \\
&> \dl_n(\xi) &  \quad   &    \text{ if }\quad \xi \in \partial \cK^{(n)}.
 \end{alignedat}
    \right.
    \end{align*}
Let $\xi \in \mathcal{K}^{(n)}$ and define $\mathcal{N}_{\xi}^{(n)}$,  $O_{\xi}^{(n)}$,  and $\eps_n(\xi)$ in the same way as \eqref{eq:N1} and \eqref{eq:N2}.   Let  $N_{\xi}^{(n)}=\# \mathcal{N}_{\xi}^{(n)}$ and define a $d \times N_{\xi}^{(n)}$ matrix $A^{(n)}(\xi)$
 in the same way as \eqref{eq:A1}.  We write $A^{(n),+}(\xi)$ for the generalized inverse matrix.   Define vectors $b^{(n)}(\xi) \in \R^d$,  $c^{(n)}(\xi) \in \R^{N_{\xi}^{(n)}}$,  a matrix $Q^{(n)}(\xi) \in \R^d \otimes \R^d$,  and $q^{(n)}(\xi) \in \R$ in the same way as \eqref{eq:b}, \eqref{eq:c0},  and \eqref{eq:dQ1}, respectively.  Define  an operator $L^{(n)} \colon \R^{\cK^{(n)}} \to \R^{\cK^{(n)}}$ by
\[
L^{(n)}f(\xi)=\frac{1}{q^{(n)}(\xi)}\sum_{\eta \in \mathcal{N}_{\xi}^{(n)}}(f(\eta)-f(\xi))(1-c^{(n)}(\xi,\eta))\frac{m(\eta)}{m(O_{\xi}^{(n)})}. \]
Assume that $\sup_{\xi \in \cK^{(n)}}\eps_n(\xi)/\rho_n(\xi)$ and $\sup_{\xi \in \cK^{(n)}}\rho_n(\xi)$ are sufficiently small and  $\inf_{\xi \in \cK^{(n)}}\rho_n(\xi)>0$.   Then,  
Lemmas~\ref{lem:A} and \ref{lem:B} imply that 
\begin{equation}\label{eq:cflgn}
\inf_{\xi \in \cK^{(n)}} q^{(n)}(\xi)>0\,\text{ and }\,|c^{(n)}(\xi,\eta)|<1\text{ for any $\xi \in \cK^{(n)}$ and $\eta \in \mathcal{N}_{\xi}^{(n)}$.}
\end{equation}
We  endow $\cK^{(n)}$ with the discrete topology and    
let $\mathcal{B}_b(\cK^{(n)})$  denote the space of bounded Borel functions on $\cK^{(n)}$ equipped with the sup-norm.  Under \eqref{eq:cflgn},  $L^{(n)}$ is a bounded linear operator on $\mathcal{B}_b(\cK^{(n)})$ and  generates a continuous-time Markov chain on $\cK^{(n)}$.   The associated semigroup is given by $\{e^{t L^{(n)}}\}_{t> 0}$.    Define $\pi_n \colon C(\R^d) \to \R^{\cK^{(n)}}$ by $
\pi_n f(\xi)=f(\bar{\xi})$.  The next theorem  immediately follows  from Theorems~\ref{thm:1} and \ref{thm:2}.
\begin{theorem}\label{thm:3}
Assume that $D$ is a $C^{1,\alpha}$-domain for some $\alpha \in (0,1]$.  Assume in addition that the following conditions are satisfied:
 \begin{align*}
\lim_{n \to \infty}\sup_{\xi \in \cK^{(n)}}\frac{\eps_n(\xi)}{\rho_n(\xi)}
=0,\quad  \lim_{n \to \infty}\sup_{\xi \in \partial \cK^{(n)}}\frac{\dl_n(\xi)}{\rho_n(\xi)}
=0, \quad\text{and}\quad \lim_{n \to \infty}\sup_{\xi \in \cK^{(n)}} \rho_n(\xi)=0.
\end{align*}
Then,  we have for any $f \in C_{\text{\rm Neu}}^2(\R^d)$,
\begin{equation*}\label{eq:cg}
\lim_{n \to \infty}\sup_{\xi \in \cK^{(n)}}\left| L^{(n)}(\pi_n f)(\xi)- \pi_n \left(\frac{\Delta f}{2} \right)(\xi)   \right|=0.
\end{equation*}
\end{theorem}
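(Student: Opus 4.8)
The plan is to obtain the statement as an immediate corollary of Theorems~\ref{thm:1} and~\ref{thm:2}, by splitting the supremum over $\cK^{(n)}$ into the contributions of the interior cells $\cK^{(n)}\setminus\partial\cK^{(n)}$ and of the boundary cells $\partial\cK^{(n)}$.

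First I would fix $f\in C_{\text{\rm Neu}}^2(\R^d)$ and set $M_f:=\sup_{y}\|\nabla^2 f(y)\|$ and $\omega_f(r):=\sup\{\|\nabla^2 f(y)-\nabla^2 f(z)\|\mid |y-z|\le r\}$, with $\omega_f(r)\to 0$ as $r\downarrow 0$; these behave well — $M_f<\infty$ and $\omega_f(r)\to 0$ — at least when $\cl{D}$ is compact, since by the third hypothesis all the balls $B(\check{\xi},\rho_n(\xi))$ occurring below eventually lie in one fixed compact set, over which $\nabla^2 f$ is bounded and uniformly continuous. Then, using the first and third hypotheses, I would pick $n_0\in\N$ with $\sup_{\xi\in\cK^{(n)}}\eps_n(\xi)/\rho_n(\xi)\le c_1\wedge c_2$ and $\sup_{\xi\in\cK^{(n)}}\rho_n(\xi)<R$ for all $n\ge n_0$, where $c_1,c_2$ are the constants of Lemmas~\ref{lem:A} and~\ref{lem:B} and $R$ the constant of Definition~\ref{defn:Dc1a}. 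For such $n$, conditions \eqref{eq:A} and \eqref{eq:B} hold for $\cK^{(n)}$, and the restriction $\rho_n(\xi)<R$ in Theorem~\ref{thm:2} holds at every boundary cell, so Theorems~\ref{thm:1} and~\ref{thm:2} apply.

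I would then apply Theorem~\ref{thm:1} at each $\xi\in\cK^{(n)}\setminus\partial\cK^{(n)}$ and Theorem~\ref{thm:2} at each $\xi\in\partial\cK^{(n)}$, bounding $\sup_{y\in B(\check{\xi},\rho_n(\xi))}\|\nabla^2 f(y)\|\le M_f$, the oscillation term by $\omega_f(\rho_n(\xi))$, and each of $\eps_n(\xi)/\rho_n(\xi)$, $\dl_n(\xi)/\rho_n(\xi)$, $\rho_n(\xi)$ by its supremum over the relevant index set. Since the constants in Theorems~\ref{thm:1} and~\ref{thm:2} depend only on $(d,c_1)$ and $(D,c_2)$, respectively, writing $u_n:=\sup_{\xi\in\cK^{(n)}}\eps_n(\xi)/\rho_n(\xi)$, $v_n:=\sup_{\xi\in\partial\cK^{(n)}}\dl_n(\xi)/\rho_n(\xi)$, and $w_n:=\sup_{\xi\in\cK^{(n)}}\rho_n(\xi)$ — all tending to $0$ by hypothesis — this gives, for $n\ge n_0$,
\[
\sup_{\xi\in\cK^{(n)}}\left| L^{(n)}(\pi_n f)(\xi)-\pi_n\left(\frac{\Delta f}{2}\right)(\xi)\right|\le C\,M_f\left(u_n+v_n+w_n^{\alpha}\right)+C\,\omega_f(w_n),
\]
with $C$ depending only on $D$, $c_1$, $c_2$. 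Letting $n\to\infty$, the right-hand side tends to $0$ because $u_n,v_n,w_n\to 0$ and $\omega_f(w_n)\to 0$, which is the assertion.

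I do not expect a genuine obstacle: the theorem is a bookkeeping corollary of Theorems~\ref{thm:1} and~\ref{thm:2}, which carry all the substance. The two points worth flagging are that the hypothesis $\sup_{\xi\in\partial\cK^{(n)}}\dl_n(\xi)/\rho_n(\xi)\to 0$ — not a consequence of the other two — is exactly what absorbs the $\dl_n(\xi)$ term in the boundary estimate of Theorem~\ref{thm:2}, and that the uniform control of $\nabla^2 f$ over all the balls $B(\check{\xi},\rho_n(\xi))$, which is trivial when $D$ is bounded, is what makes the oscillation terms vanish uniformly in $\xi$.
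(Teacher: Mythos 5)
Your proposal is correct and matches the paper exactly: the paper gives no separate argument for this theorem, stating only that it "immediately follows from Theorems~\ref{thm:1} and \ref{thm:2}," which is precisely the interior/boundary bookkeeping you carry out. Your caveat about needing $\nabla^2 f$ bounded and uniformly continuous on a neighborhood of $\cl{D}$ (automatic when $D$ is bounded, and in any case satisfied by the functions in $C_{c,\text{\rm Neu}}^2(\cl{D})$ used later) is a point the paper itself glosses over, so your write-up is if anything slightly more careful than the original.
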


In what follows,  we assume the following condition.
\begin{itemize}
\item[{\bf (A)}] For sufficiently large $n \in \N$,  we have $\inf_{\xi \in \cK^{(n)}}\rho_n(\xi)>0.$
\end{itemize}
Under condition~{\bf (A)} and all assumptions of Theorem~\ref{thm:3},  $(L^{(n)},\mathcal{B}_b(\cK^{(n)}))$ is a generator for all sufficiently large $n$.  To achieve a weak convergence of the associated Markov chains,  we may assume that all $(L^{(n)},\mathcal{B}_b(\cK^{(n)}))$ are generators.
For $n \in \N$,  let  $X^{(n)}=(\{X_t^{(n)}\}_{t\ge0 },\{P^{(n)}_\xi\}_{\xi \in \cK^{(n)}})$ be the continuous-time Markov chain on $\cK^{(n)}$ associated with $L^{(n)}$. 
 From Theorem~\ref{thm:3},  we expect that the limit of $\{X^{(n)}\}_{n=1}^\infty$ is the RBM $X=(\{X_t\}_{t \ge 0},\{P_x\}_{x \in \cl{D}})$ on $\cl{D}$.  This is a solution to the following stochastic differential equation:
\begin{equation}\label{eq:skorohod}
X_t=x+B_t+\int_{0}^t\nu(X_s)\,dL_s^X,\quad t \ge 0,\quad \text{$P_{x}$-a.s.\  for any $x \in \cl{D}$},
\end{equation}
where  $B=\{B_t\}_{t \ge 0}$ is a $d$-dimensional Brownian motion  and $L^X=\{L_t^X\}_{t \ge 0}$ is the boundary local time of $X$.  Since $D$ is a $C^{1,\alpha}$-domain,  it is a Lipschitz domain.  Thus,  $X$ can be constructed as a Feller process on $\cl{D}$ (see,  e.g., \cite[Theorem~4.4 and Remark~4.6]{BH} or  \cite[Theorems~2.2 and 2.3]{FT}). 
This means that  the semigroup $\{p_t\}_{t>0}$ of $X$ 
is strongly continuous on $C_\infty(\cl{D})$
and satisfies $p_t (C_\infty(\cl{D})) \subset C_\infty(\cl{D})$ for any $t>0.$ Here,  $C_\infty(\cl{D})$ denotes the space of all continuous functions on $\cl{D}$ vanishing at infinity.  If $D$ is bounded,    $C_\infty(\cl{D})$ coincides with the space $C(\cl{D})$ of all continuous functions on $\cl{D}$.  

Let $d_{\mathrm{H}}$ denote the Hausdorff metric between non-empty compact sets in $\R^d.$
Let $\mathcal{X}$ be  the set of  all non-empty compact subsets of $\cl{D}$ equipped with the metric $d_{\mathrm{H}}$,  and let $\mathcal{D}([0,\infty),\mathcal{X})$ denote the space of right continuous functions on $[0,\infty)$ having left limits and  taking values in $\mathcal{X}$ that is equipped with the Skorohod topology.   For any $n \in \N$,  we have $\cK^{(n)} \subset \mathcal{X}$.  Thus,  each $X^{(n)}$  induces  the law on $\mathcal{D}([0,\infty),\mathcal{X})$.  Any $x \in \cl{D}$ is regarded as an element of $\mathcal{X}$ through the canonical embedding.  Since  $X$ is a diffusion process on $\cl{D}$, the RBM $X$ also induces the law on $\mathcal{D}([0,\infty),\mathcal{X})$.  Let $(\mathcal{L},\text{Dom}(\mathcal{L}))$ denote the generator of the Feller process $X$ on $\cl{D}$.  Define a subspace $C_{c,\text{Neu}}^2(\cl{D})$ of $C_{\infty}(\cl{D})$ as
\begin{align*}
C_{c,\text{Neu}}^2(\cl{D})=\left\{f \in C^2_c(\mathbb{R}^d)|_{\cl{D}} \relmiddle| \langle \nabla f(x),\nu(x) \rangle=0\text{ for every $x \in \partial D$} \right\},
\end{align*}
where $C^2_c(\mathbb{R}^d)=C^2(\R^d) \cap C_c(\R^d)$ and $C_c(\R^d)$ is the space of all compactly supported real-valued continuous functions on $\R^d.$  Let  $f \in C_{c,\text{Neu}}^2(\cl{D})$,  $x \in \cl{D}$,  and $t \ge 0$.  
Applying the It\^{o} formula to \eqref{eq:skorohod},  
we have $P_{x}$-a.s.,
\begin{align*}
&f(X_t)-f(x)\\
&=\int_{0}^{t} \langle \nabla f(X_s),\,dB_s \rangle  +\int_{0}^{t} \langle \nabla f(X_s),\nu(X_s) \rangle\,dL_s^{X} +\frac{1}{2}\int_{0}^{t} \Delta f(X_s)\,ds \\
&=\int_{0}^{t} \langle \nabla f(X_s),\,dB_s \rangle  +0+\frac{1}{2}\int_{0}^{t} \Delta f(X_s)\,ds.
\end{align*}
Taking the expectation on both sides of the above equation with respect to $P_x$, we have  \[
p_tf(x)-f(x)=\frac{1}{2}\int_{0}^t p_s(\Delta f)(x)\,ds.\]
This and the strong continuity of $\{p_t\}_{t>0}$ imply that\[\lim_{t \to 0} \frac{p_tf(x)-f(x)}{t}=\frac{\Delta f}{2}(x)\quad\text{uniformly in $x \in \cl{D}$.}\]    Thus,  $ C_{c,\text{Neu}}^2(\cl{D})$ is a subspace of $\text{Dom}(\mathcal{L})$,  and we have 
$
\mathcal{L}f(x)=(\Delta f/2)(x)$
for any  $x \in \cl{D}$ and $f \in C_{c,\text{Neu}}^2(\cl{D})$.    In particular,  letting $n \in \N$ and $\xi \in \cK^{(n)}$ satisfy $\bar{\xi} \in \cl{D}$,    we have for any $f \in C_{c,\text{Neu}}^2(\cl{D})$,
\begin{equation}\label{eq:LD}
\mathcal{L}f(\bar{\xi})=\frac{\Delta f}{2}(\bar{\xi}).
\end{equation}
The following condition is used to obtain this equation for any sufficiently large $n$ and $\xi \in \cK^{(n)}$.
\begin{itemize}
\item[{\bf (B)}] For sufficiently large $n \in \N$ and any $\xi \in \mathcal{K}^{(n)}$,  we have $\bar{\xi} \in \cl{D}$.
\end{itemize}

Assume condition~{\bf (B)} and take $n \in \N$ such that $\bar{\xi} \in \cl{D}$ holds for any  $\xi \in \mathcal{K}^{(n)}$.  Define  $\widetilde{\pi}_n \colon  C(\cl{D}) \to \R^{\cK^{(n)}}$ by  $(\widetilde{\pi}_nf)(\xi)=f(\bar{\xi})$.  Under condition~{\bf (A)} and all assumptions of Theorem~\ref{thm:3},  we use \eqref{eq:LD} to see that  for any $f \in  C_{c,\text{Neu}}^2(\cl{D})$,
\begin{equation}\label{eq:cg'}
\lim_{n \to \infty}\sup_{\xi \in \cK^{(n)}}\left| L^{(n)}(\widetilde{\pi}_n f)(\xi)- \widetilde{\pi}_n \left( \mathcal{L} f \right)(\xi)   \right|=0.
\end{equation}
This suggests that the laws of $\{X^{(n)}\}_{n=1}^\infty$ converge weakly to the law of $X$ in  $\mathcal{D}([0,\infty),\mathcal{X})$.  The main result of this paper is the proof of this convergence.

\begin{theorem}\label{thm:4}
Assume that all conditions of Theorem~\ref{thm:3} are satisfied.  Furthermore,  assume conditions~{\bf (A)} and  {\bf (B)}  and that $ C_{c,\text{\rm Neu}}^2(\cl{D})$ is a core for $(\mathcal{L},\text{\rm Dom}(\mathcal{L}))$.  Let $x \in \cl{D}$,  and let $\xi^{(n)} \in \cK^{(n)}$ ($n=1,2,\ldots$) satisfy     $\lim_{n \to \infty}d_{\mathrm{H}}(\xi^{(n)},x)=0.$ Then, as $n \to \infty$, the laws of  $\{(X^{(n)},P_{\xi^{(n)}}^{(n)})\}_{n=1}^\infty$ converge weakly in $\mathcal{D}([0,\infty),\mathcal{X})$ to the law of the RBM on $\cl{D}$ starting from $x$.
\end{theorem}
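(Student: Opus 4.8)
The plan is to apply a standard semigroup/generator convergence criterion for Markov processes — specifically the Trotter--Kurtz approximation theorem (see \cite[Chapter~1, Theorem~6.5]{EK} or \cite[Chapter~4, Theorem~2.6 and Corollary~8.7]{EK}) — combined with the convergence of generators established in \eqref{eq:cg'}. The principal difficulty is that the state spaces $\cK^{(n)}$ and $\cl{D}$ are \emph{different} metric spaces, so one must first realize everything inside the common Polish space $\mathcal{X}$ of nonempty compact subsets of $\cl{D}$ under $d_{\mathrm{H}}$, and carefully build the connecting linear maps between $C_\infty(\cl{D})$ and $\mathcal{B}_b(\cK^{(n)})$ (or rather the relevant function spaces on $\mathcal{X}$). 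The natural choice is $\widetilde{\pi}_n f(\xi)=f(\bar\xi)$ for $f\in C_\infty(\cl D)$, together with an "inclusion/projection" pair: each $\xi\in\cK^{(n)}$ is a point of $\mathcal{X}$, and $\cl{D}\hookrightarrow\mathcal{X}$ by $x\mapsto\{x\}$; one checks $\|\widetilde{\pi}_n f\|\le\|f\|$ and, using assumption $\lim_n\sup_\xi\operatorname{diam}(\xi)\le\lim_n\sup_\xi\eps_n(\xi)=0$ (so cells shrink uniformly on compacts), that $\widetilde{\pi}_n$ is asymptotically compatible with the embedding in the sense required by \cite[Chapter~1, Section~6]{EK}.

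\textbf{Step 1: Identify the limiting Feller process and its core.} The RBM $X$ on $\cl D$ is the Feller process with generator $(\mathcal L,\operatorname{Dom}(\mathcal L))$ on $C_\infty(\cl D)$, and by the It\^o-formula computation already carried out in the text, $C^2_{c,\mathrm{Neu}}(\cl D)\subset\operatorname{Dom}(\mathcal L)$ with $\mathcal L f=\tfrac12\Delta f$ there. By hypothesis $C^2_{c,\mathrm{Neu}}(\cl D)$ is a core; this is exactly what makes the generator-convergence criterion applicable (one only needs convergence on a core).

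\textbf{Step 2: Verify generator convergence on the core.} This is \eqref{eq:cg'}: for every $f\in C^2_{c,\mathrm{Neu}}(\cl D)$, $\sup_{\xi\in\cK^{(n)}}|L^{(n)}(\widetilde{\pi}_n f)(\xi)-\widetilde{\pi}_n(\mathcal L f)(\xi)|\to 0$, which follows from Theorem~\ref{thm:3} together with \eqref{eq:LD} under conditions {\bf (A)} and {\bf (B)}. Since $\|\widetilde{\pi}_n g\|_{\mathcal B_b(\cK^{(n)})}\le\|g\|_{C_\infty(\cl D)}$, we also have $\widetilde{\pi}_n(\mathcal L f)\to\mathcal L f$ in the appropriate sense. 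Invoking Trotter--Kurtz, the semigroups $e^{tL^{(n)}}$ converge strongly (relative to $\widetilde{\pi}_n$) to $p_t$, uniformly for $t$ in compact intervals.

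\textbf{Step 3: Convergence of initial distributions and promotion to path-space weak convergence.} Given $x\in\cl D$ and $\xi^{(n)}\in\cK^{(n)}$ with $d_{\mathrm H}(\xi^{(n)},x)\to 0$, the point masses $\delta_{\xi^{(n)}}$ on $\mathcal X$ converge weakly to $\delta_{\{x\}}$; combined with $\widetilde{\pi}_n f(\xi^{(n)})=f(\bar{\xi^{(n)}})\to f(x)$ (because $\bar{\xi^{(n)}}$ lies in $\xi^{(n)}$, whose Hausdorff distance to $x$ tends to $0$), the hypotheses of the Markov-chain convergence theorem \cite[Chapter~4, Theorem~2.6 (or Corollary~8.9)]{EK} are met. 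That theorem then yields that the laws of $(X^{(n)},P^{(n)}_{\xi^{(n)}})$, viewed in $\mathcal D([0,\infty),\mathcal X)$, converge weakly to the law of $(X,P_x)$, the RBM started at $x$. One final point to check is that $X^{(n)}$, a priori a process in $\cK^{(n)}\subset\mathcal X$, and $X$, a priori a process in $\cl D\subset\mathcal X$, are genuinely $\mathcal X$-valued c\`adl\`ag processes and that the Skorohod topology on $\mathcal D([0,\infty),\mathcal X)$ is the right one; this is routine since $\cK^{(n)}\subset\mathcal X$ by construction and $x\mapsto\{x\}$ is a homeomorphic embedding $\cl D\hookrightarrow\mathcal X$.

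\textbf{Main obstacle.} The genuinely delicate point is the inter-space bookkeeping in Step 3: making precise the sense in which "$X^{(n)}\to X$" when the $X^{(n)}$ live on the shrinking-cell spaces $\cK^{(n)}$ while $X$ lives on $\cl D$, and checking that the connecting maps $\widetilde{\pi}_n$ together with the embedding $\cl D\hookrightarrow\mathcal X$ satisfy the compatibility axioms of the Ethier--Kurtz framework — in particular that $\sup_{\xi\in\cK^{(n)}}d_{\mathrm H}(\xi,\{\bar\xi\})\to 0$ uniformly on compact subsets of $\cl D$, which is where the hypothesis $\lim_n\sup_\xi\eps_n(\xi)/\rho_n(\xi)=0$ together with $\lim_n\sup_\xi\rho_n(\xi)=0$ (hence $\lim_n\sup_\xi\eps_n(\xi)=0$, hence $\lim_n\sup_\xi\operatorname{diam}(\xi)=0$) enters. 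Once this identification is set up correctly, the weak convergence is a direct application of the cited general theorems, and no tightness argument needs to be run by hand because it is subsumed in the semigroup convergence.
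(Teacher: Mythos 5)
Your proposal is correct and follows essentially the same route as the paper: generator convergence on the core via Theorem~\ref{thm:3} and \eqref{eq:LD}, the Ethier--Kurtz semigroup and process convergence theorems applied to the projected processes $\overline{X^{(n)}}$ on $\cl{D}$, and the uniform bound $d_{\mathrm{H}}(\xi,\{\bar{\xi}\})\le\eps_n(\xi)\to0$ to transfer the conclusion to the $\mathcal{X}$-valued chains (the paper makes this last step explicit through the Skorohod-metric estimate \eqref{eq:skm} and bounded Lipschitz test functions). One minor slip: $\bar{\xi}$ need not lie in $\xi$ for non-convex cells, but $\xi\subset\cl{B}(\bar{\xi},\eps_n(\xi))$ still yields $\overline{\xi^{(n)}}\to x$, so your argument goes through.
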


In the following proposition,  we provide sufficient conditions for $C_{c,\text{Neu}}^2(\cl{D})$ to be a core for $(\mathcal{L},\text{\rm Dom}(\mathcal{L}))$.  The proof is given in Appendix~B.

\begin{proposition}\label{prop:core}
If $D$ is a bounded $C^{1,1}$-domain or a bounded convex $C^{1,\alpha}$-domain for some $\alpha \in (0,1]$,  then $ C_{c,\text{\rm Neu}}^2(\cl{D})$ is a core for $(\mathcal{L},\text{\rm Dom}(\mathcal{L})).$
\end{proposition}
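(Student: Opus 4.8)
\section*{Proof proposal}

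The plan is to use the standard resolvent characterization of a core. Since $(\mathcal{L},\text{Dom}(\mathcal{L}))$ generates a strongly continuous contraction semigroup on $C_\infty(\cl{D})=C(\cl{D})$ (recall $D$ is bounded), for any fixed $\lambda>0$ the resolvent $R_\lambda=(\lambda-\mathcal{L})^{-1}$ is a bounded bijection of $C(\cl{D})$ onto $\text{Dom}(\mathcal{L})$, and a subspace $\mathcal{C}\subset\text{Dom}(\mathcal{L})$ is a core if and only if $(\lambda-\mathcal{L})\mathcal{C}$ is dense in $C(\cl{D})$ (see, e.g., \cite[Chapter~1]{EK}). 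Because $\mathcal{L}f=\tfrac12\Delta f$ for $f\in C_{c,\text{Neu}}^2(\cl{D})$ by \eqref{eq:LD}, it thus suffices to prove that the set $\{\lambda f-\tfrac12\Delta f\mid f\in C_{c,\text{Neu}}^2(\cl{D})\}$ is dense in $C(\cl{D})$.

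As a first, cleaner step I would establish the $L^2$-analogue, which isolates exactly where the hypotheses on $\partial D$ enter. Recall that the RBM is symmetric with respect to Lebesgue measure and is associated with the Dirichlet form $\mathcal{E}(u,v)=\tfrac12\int_D\langle\nabla u,\nabla v\rangle\,dx$ on $H^1(D)$. By Hahn--Banach it is enough to show that if $w\in L^2(D)$ satisfies $\int_D(\lambda f-\tfrac12\Delta f)w\,dx=0$ for all $f\in C_{c,\text{Neu}}^2(\cl{D})$, then $w=0$. Testing against $f\in C_c^\infty(D)\subset C_{c,\text{Neu}}^2(\cl{D})$ gives $\tfrac12\Delta w=\lambda w$ in the interior, so $w\in C^\infty(D)$ by hypoellipticity. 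The key input is now $H^2$-regularity up to the boundary for the Neumann problem, which holds \emph{precisely} because $D$ is a bounded $C^{1,1}$-domain (Agmon--Douglis--Nirenberg) or a bounded convex domain (Grisvard). Hence $w\in H^2(D)$, and Green's identity together with $\langle\nabla f,\nu\rangle=0$ reduces the annihilation relation to $\int_{\partial D}f\,\langle\nabla w,\nu\rangle\,dS=0$; since the boundary traces of $C_{c,\text{Neu}}^2(\cl{D})$ are dense in $L^2(\partial D)$ (the space is an algebra containing the constants and the interior-supported functions, and point separation is supplied by a transversal $C^2$ correction), this forces $\langle\nabla w,\nu\rangle=0$ on $\partial D$. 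The $H^2$ energy identity applied to $w$ itself then yields $\tfrac12\|\nabla w\|_{L^2}^2+\lambda\|w\|_{L^2}^2=0$, whence $w=0$. This proves that $C_{c,\text{Neu}}^2(\cl{D})$ is a core for the $L^2$-generator.

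It then remains to upgrade $L^2$-range density to sup-norm range density. Given $h\in C(\cl{D})$, the resolvent solution $u=R_\lambda h$ lies in $\text{Dom}(\mathcal{L})$ and, by the same boundary regularity, in $W^{2,\infty}(D)$ (in the $C^{1,1}$ case) or $H^2(D)$ (in the convex case), with $\Delta u=2(\lambda u-h)\in C(\cl{D})$; crucially, however, $u$ need \emph{not} belong to $C^2(\cl{D})$. The task is therefore to approximate such $u$ in the graph norm $\|\cdot\|_\infty+\|\Delta(\cdot)\|_\infty$ by elements of $C_{c,\text{Neu}}^2(\cl{D})$, controlling only the Laplacian and not the full Hessian. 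I would do this by a boundary-adapted mollification: extend $u$ across $\partial D$ preserving its $W^{2,\infty}$ (resp.\ $H^2$) bound, mollify to obtain smooth $u_\varepsilon$ whose Laplacians $\Delta u_\varepsilon$ converge to $\Delta u$ uniformly---here the bounded (resp.\ square-integrable) Hessian furnished by the $C^{1,1}$/convexity hypothesis is exactly what promotes convergence of the mollified Laplacians from $L^p$ to the uniform topology---and finally add a small Neumann-restoring correction $v_\varepsilon$ with $\langle\nabla v_\varepsilon,\nu\rangle=-\langle\nabla u_\varepsilon,\nu\rangle\to0$ on $\partial D$, chosen to be $o(1)$ in the graph norm.

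The main obstacle is exactly this last upgrade. On $C^{1,1}$ or merely convex domains $\text{Dom}(\mathcal{L})$ genuinely contains functions that are only $C^{1,1}$ (bounded but discontinuous Hessian) and not $C^2$ up to $\partial D$, so one cannot produce the approximants by solving the resolvent equation inside $C_{c,\text{Neu}}^2(\cl{D})$; the entire difficulty is to approximate in graph norm while tracking only the continuous trace $\Delta u$. The $C^{1,1}$ and convexity assumptions enter decisively, guaranteeing the $W^{2,\infty}$/$H^2$ control that makes the mollified Laplacians converge uniformly and the Neumann correction negligible. For the convex case one may alternatively exhaust $D$ by smooth convex subdomains and pass to the limit, using convexity to control the Neumann data uniformly.
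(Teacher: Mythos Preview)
Your approach diverges from the paper's at the crucial point. You correctly begin with the resolvent-range characterization of a core, but then assume the worst case---that $u=R_\lambda h$ is at best $W^{2,\infty}$ or $H^2$ and not $C^2(\cl{D})$---and try to overcome this by mollification plus a Neumann correction. The paper instead proves that for smooth $h$ (specifically $h\in C_c^\infty(\R^d)|_{\cl{D}}$) the resolvent $G_\beta h$ \emph{does} belong to $C^2(\cl{D})$ with $\langle\nabla G_\beta h,\nu\rangle=0$ on $\partial D$, so that $G_\beta(C_c^\infty(\R^d)|_{\cl{D}})\subset C_{c,\text{Neu}}^2(\cl{D})$ directly (after a Whitney extension using quasiconvexity of $D$). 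The $C^2(\cl{D})$ regularity is obtained by combining $H^2$ regularity (Grisvard \cite{GP} for convex, standard for $C^{1,1}$) with a Schauder-type result of Dong--Lee--Kim \cite{DLK} for conormal problems with Dini mean oscillation data on $C^{1,\text{Dini}^2}$ boundaries; both hypotheses on $D$ ensure $\partial D$ is $C^{1,\text{Dini}^2}$, and smooth $h$ is of Dini mean oscillation. Since $G_\beta(C_c^\infty(\R^d)|_{\cl{D}})$ is automatically a core, the conclusion follows in one line.

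Your mollification-plus-correction route, by contrast, has genuine gaps precisely where you flag the ``main obstacle.'' First, extending $u$ across $\partial D$ and mollifying does not give uniform convergence of the Laplacians near $\partial D$: even though $\Delta u=2(\lambda u-h)$ is continuous on $\cl{D}$, the extension's Laplacian will typically be discontinuous across $\partial D$, and mollification of a discontinuous function fails to converge uniformly there. Second, the Neumann-restoring correction $v_\varepsilon$ must be a $C^2(\R^d)$ function with prescribed normal derivative on $\partial D$ and $o(1)$ graph norm; on a convex $C^{1,\alpha}$ domain with $\alpha<1$ the normal $\nu$ is only $C^\alpha$, and building such a $v_\varepsilon$ runs into exactly the obstruction the paper exhibits in its remark following the proposition (where for certain non-convex $C^{1,\alpha}$ domains every $f\in C_{c,\text{Neu}}^2(\cl{D})$ vanishes on $\partial D$). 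Finally, your claim of $W^{2,\infty}$ regularity in the $C^{1,1}$ case is too strong: $C^{1,1}$ boundary with $L^\infty$ data yields $W^{2,p}$ for all finite $p$, not $W^{2,\infty}$, so the uniform convergence of mollified Hessians you invoke is not available without the Dini/Schauder input. The missing idea is precisely the $C^2(\cl{D})$ boundary regularity that the paper takes from \cite{DLK}.
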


\begin{remark}
The space $ C_{c,\text{Neu}}^2(\cl{D})$ is not necessarily a core for $(\mathcal{L},\text{\rm Dom}(\mathcal{L}))$ even if $D$ is a $C^{1,\alpha}$-domain for some $\alpha \in (0,1)$.   Indeed,   let $\varphi \colon \R \to \R$ be a $C^{1,\alpha}$-function such that $\varphi'$ is non-differentiable at any point of $\R$,  and consider $D=\{(x_1,x_2) \in \R^2 \mid x_2>\varphi(x_1)\}$.  Then,  the inward normal unit vector $\nu$ on $\partial D$ is given by \[\nu(x)=\frac{1}{\sqrt{\varphi'(x_1)^2+1}}\left(-\varphi'(x_1),1 \right),\quad x =(x_1,x_2)\in \partial D.\]
Let $f \in  C_{c,\text{Neu}}^2(\cl{D})$.  Since $\langle \nabla f(x), \nu(x) \rangle=0$ for any $x \in \partial D$,  we have
\begin{equation}\label{eq:NU}
\varphi'(x_1)\frac{\partial f}{\partial x_1}(x_1,\varphi(x_1))=\frac{\partial f}{\partial x_2}(x_1,\varphi(x_1)),\quad x_1\in \R.
\end{equation}
Set $O=\{x_1 \in \R \mid   (\partial f/\partial x_1)(x_1,\varphi(x_1))\neq 0\}$ and suppose that $O$ is non-empty.  Since $f$ is a $C^2$-function,  \eqref{eq:NU} shows that $\varphi'$ is a $C^1$-function on $O$,  which contradicts the definition of $\varphi$.  Thus,  $\nabla f =0$ on $\partial D$,  which implies that $f$ is constant on $\partial D$.   Since the support of $f$ is a compact subset of $\R^d$,  we also see that  $f=0$ on $\partial D$.  This implies that $ C_{c,\text{Neu}}^2(\cl{D})$ is not a dense subspace of $C_{\infty}(\cl{D}).$ In particular,  $ C_{c,\text{Neu}}^2(\cl{D})$ is not a core for $(\mathcal{L},\text{\rm Dom}(\mathcal{L}))$.
\end{remark}

In the rest of this section,  we discuss examples that meet the conditions in Theorem~\ref{thm:4}.

\begin{example}\label{ex:voronoi}
Let $D$ be a bounded convex $C^{1,\alpha}$-domain with some $\alpha \in (0,1]$.  Let $\{Z_k\}_{k=1}^\infty$ be $D$-valued independent and identically distributed random variables defined on a probability space $(\Omega,\mathcal{F},P)$.  Assume that the distribution of $Z_1$ is equal to $m|_D/m(D)$.   For $\omega \in \Omega$ and $n \in \N$,  define $
V^{(n)}(\omega)=\{Z_1(\omega),\ldots, Z_n(\omega)\}$.  
Then, for  $P$-a.s.\ $\omega$,  we have $\# V^{(n)}(\omega)=n$ for any $n \in \N$.
For such $\omega$ and an integer $k$ with $1\le k \le n$,  we define a compact subset $\xi^{(k)}(\omega)$ of $ \cl{D}$ as
\[
\xi^{(k)}(\omega)=
\left\{x \in \cl{D}  \relmiddle||x-Z_k(\omega)| \le |x-Z_l(\omega)|\,\, \text{ for any $1\le l \le n$ 
with $l\neq k$} \right\}.
\]
Let $\cK^{(n)}(\omega):=\{\xi^{1}(\omega),\ldots,\xi^{n}(\omega)\}$, which is called the Voronoi partition associated with $V^{(n)}(\omega).$    From \cite[Proposition~5.1]{DS},   there exists $C>0$ such that for $P$-a.s.\  $\omega$ and sufficiently large $n$ (depending on $\omega$),  
\begin{equation}\label{eq:voron}
\max_{\xi \in \cK^{(n)}(\omega)}\text{diam}(\xi) \le C \left(\frac{\log n}{n}\right)^{1/d}.
\end{equation}
Let $\{a_n\}_{n=1}^\infty$ and $\{b_n\}_{n=1}^\infty$ be  two sequences of positive numbers such that 
\begin{equation*}
\lim_{n \to \infty}\left.   \left(\frac{\log n}{n}\right)^{1/d}  \right/  a_n=0,\quad \lim_{n \to \infty}\frac{a_n}{b_n}=0,\quad \text{and }\quad \lim_{n \to \infty}b_n=0.
\end{equation*}
For $\omega  \in \Omega$ and $n \in \N$,  define $\dl_n \colon \cK^{(n)}(\omega) \to (0,\infty)$ and  $\rho_n \colon \cK^{(n)}(\omega) \to (0,\infty)$ by $
\dl_n(\xi)=a_n$ and
\begin{align*}
\rho_n(\xi)&=
\left\{
    \begin{alignedat}{2}
&a_n  &\quad&   \text{ if } \xi \in \cK^{(n)}(\omega) \setminus \partial \cK^{(n)}(\omega) ,  \\
& b_n&  \quad   &    \text{ if }  \xi \in \partial \cK^{(n)}(\omega).
 \end{alignedat}
    \right.
    \end{align*}
Then,  \eqref{eq:voron} implies that for $P$-a.s.\  $\omega$,
 \begin{align*}
&\lim_{n \to \infty}\sup_{\xi \in \cK^{(n)}(\omega) }\frac{\eps_n(\xi)}{\rho_n(\xi)}
=0,\quad  \lim_{n \to \infty}\sup_{\xi \in \partial \cK^{(n)}(\omega) }\frac{\dl_n(\xi)}{\rho_n(\xi)}
=0,\quad\text{and}\\
& \lim_{n \to \infty}\sup_{\xi \in \cK^{(n)}(\omega) } \rho_n(\xi)=0.
\end{align*}
Since  $D$ is a bounded convex domain and each $\cK^{(n)}(\omega)$ is a Voronoi partition,  any $\xi \in \cK^{(n)}(\omega)$ is a convex set.  In particular,  we have  $\bar{\xi} \in \xi \subset  \cl{D}$ for any   $\xi \in \cK^{(n)}(\omega)$.   This implies that condition~{\bf (B)} holds.  Since $D$ is bounded,  condition~{\bf (A)} also holds. From Proposition~\ref{prop:core},  we see that  $ C_{c,\text{Neu}}^2(\cl{D})$ is a core for $(\mathcal{L},\text{\rm Dom}(\mathcal{L}))$.  Therefore,   the assumptions of Theorem~\ref{thm:4} are satisfied.  
\end{example}

\begin{example}\label{ex:srw}
We consider the case of $D=\R^d.$  In this case,  the RBM $X$ is nothing but the $d$-dimensional Brownian motion $B=(\{B_t\}_{t \ge 0},\{P_x\}_{x \in \R^d})$.  For $n \in \N$,  we set \begin{equation}\label{eq:cl}
\mathcal{K}^{(n)}=\left\{  \prod_{i=1}^d \left[x_i-\frac{1}{2n},x_i+\frac{1}{2n} \right] \relmiddle| x=(x_1,\ldots,x_d) \in \frac{1}{n}\mathbb{Z}^d\right\}
\end{equation}
and  let $\dl_n\colon \cK^{(n)} \to (0,\infty)$.  
Because $D=\R^d$,   we have $\partial  \cK^{(n)}=\emptyset$.    Let $\{a_n\}_{n=1}^\infty$ be a sequence of positive numbers  such that 
$\lim_{n \to \infty}n^{-1}/a_n=0$ and $\lim_{n \to \infty}a_n=0$.   For $n \in \N$,  let $\rho_n \colon \cK^{(n)} \to (0,\infty)$  and assume that $\rho_n(\xi)=\dl_n(\xi)=a_n$ for any $\xi \in \cK^{(n)}$.  
Since $\eps_n(\xi)=\sqrt{d}/(2n)$ for any $n \in \N$ and $\xi \in \cK^{(n)}$,  we have
 \[
\lim_{n \to \infty}\sup_{\xi \in \cK^{(n)}}\frac{\eps_n(\xi)}{\rho_n(\xi)} =0\quad\text{and}\quad \lim_{n \to \infty}\sup_{\xi \in \cK^{(n)}} \rho_n(\xi)=0.
\]
Obviously,  conditions~{\bf (A)} and {\bf (B)} hold.  We have
$ C_{c,\text{Neu}}^2(\R^d)= C^2_c(\mathbb{R}^d)$,  and  $C^2_c(\mathbb{R}^d)$ is a core for  the generator of  the Feller process  $B$ on $\R^d.$   Thus,  the assumptions of Theorem~\ref{thm:4} are satisfied.  
\end{example}

\begin{remark}\label{rem:srw}
Let $n \in \N$ and regard $n^{-1}\mathbb{Z}^d$  as a graph with edge set $\{\{x,y\} \mid x,y \in n^{-1}\mathbb{Z}^d,\,  |x-y|\le 1/n \}$.  It is a well-known classical result that the $d$-dimensional Brownian motion can be approximated by  a continuous-time simple random walk on $n^{-1}\mathbb{Z}^d$ with an exponentially distributed waiting time.   In our formulation,   regard $\cK^{(n)}$ given in \eqref{eq:cl} as a graph with edge set $\{\{\xi,\eta\} \mid \xi,\eta \in \cK^{(n)},|\bar{\xi}-\bar{\eta}|\le 1/n\}$.    Then,   $\cl{\cK^{(n)}}(=\{\bar{\xi} \mid \xi \in \cK^{(n)}\})$ coincides with $n^{-1}\mathbb{Z}^d$.
 Let  $\dl_n$ be a positive function on $\cK^{(n)}$.   Since $D=\R^d$,   we have $\partial \cK^{(n)}=\emptyset$.
Let $\rho_n \colon \cK^{(n)} \to (0,\infty)$ satisfy
 $\rho_n(\xi)=\dl_n(\xi)$ for any $\xi \in \cK^{(n)}$.  
As stated in Example~\ref{ex:srw},   we have $\eps_n(\xi)=\sqrt{d}/(2n)$ for any $\xi \in \cK^{(n)}$.    It is easy to verify that the Markov chain $X^{(n)}$ on $\cK^{(n)}$ becomes a continuous-time simple random walk on $\cK^{(n)}$ if and only if $\rho_n\colon \cK^{(n)} \to (0,\infty)$ satisfies $1/n<\rho_n(\xi)<\sqrt{d}/n$ for any   $\xi \in \cK^{(n)}$.  
Unfortunately,  Theorem~\ref{thm:4} does not cover this scaling speed,  in exchange for the generality of the setting.
\end{remark}

\section{Preliminaries}

In this section,  we provide some  basic estimates.  Henceforth,  we fix a partition $\cK$ of $\cl{D}$.  Let $\delta\colon \cK \to (0,1)$ and define $\partial \cK$ as described in \eqref{eq:boundarycell}.  For each $\xi \in \partial \cK$,  we fix $\hat{\xi} \in \partial D$ such that  $|\bar{\xi}-\hat{\xi}|=\text{dist}(\bar{\xi},\partial D).$  
 We also fix   $\rho \colon \cK \to (0,\infty)$ satisfying \eqref{eq:r}. 
The next lemma immediately follows  from the definitions stated above and ones given in \eqref{eq:N1} and \eqref{eq:N2}. 
 
\begin{lemma}\label{lem:ed}
\begin{itemize}
\item[{\rm (1)}]For any $\xi \in \cK$,  $\eta \in \mathcal{N}_\xi$,  and $y \in \eta $,  we have  $|y-\bar{\eta}| \le \eps(\xi)$.  In particular,  $\xi \subset \cl{B}(\bar{\xi},\eps(\xi))$.
\item[{\rm (2)}] For any $\xi \in \partial \cK$,  we have $
|\bar{\xi}-\hat{\xi}| < \dl(\xi).$
\end{itemize}
\end{lemma}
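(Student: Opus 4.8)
The statement is, as the surrounding text says, essentially a matter of unwinding the definitions \eqref{eq:boundarycell}, \eqref{eq:N1}, and \eqref{eq:N2}, so the plan is to read each assertion off those definitions directly. The only clause that is not immediate is the ``in particular'' part of (1), since it requires knowing that $\xi$ is one of its own neighbors, i.e.\ $\xi \in \mathcal{N}_\xi$; I would obtain this from (2) together with \eqref{eq:r}. Accordingly I would prove (2) first, then the main inequality of (1), and finally combine them.

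For (2), I would start from the defining property of $\partial\cK$ in \eqref{eq:boundarycell}: $\xi \in \partial\cK$ means $B(\bar\xi,\dl(\xi)) \not\subset D$, which is equivalent to $\text{dist}(\bar\xi,\R^d\setminus D) < \dl(\xi)$. Hence the open ball $B(\bar\xi,\dl(\xi))$ contains some point $z \notin D$. In the relevant situation $\bar\xi \in \cl{D}$, so either $\bar\xi \in \partial D$, in which case $|\bar\xi-\hat\xi| = \text{dist}(\bar\xi,\partial D) = 0 < \dl(\xi)$, or $\bar\xi \in D$. In the latter case the segment $[\bar\xi,z]$ joins a point of $D$ to a point of its complement and therefore meets $\partial D$ at some $w$ with $|w-\bar\xi| \le |z-\bar\xi| < \dl(\xi)$; consequently $|\bar\xi-\hat\xi| = \text{dist}(\bar\xi,\partial D) \le |w-\bar\xi| < \dl(\xi)$, which is (2).

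For (1), the main inequality is purely definitional: given $\xi \in \cK$, $\eta \in \mathcal{N}_\xi$, and $y \in \eta$, the inclusion $\eta \in \mathcal{N}_\xi \subset \mathcal{N}_\xi \cup \widetilde{\mathcal{N}}_\xi$ and the definition of $\eps(\xi)$ in \eqref{eq:N2} give $|y-\bar\eta| \le \sup_{y' \in \eta}|y'-\bar\eta| \le \eps(\xi)$. For the ``in particular'' clause I would verify $\xi \in \mathcal{N}_\xi$ and then apply this inequality with $\eta = \xi$. By \eqref{eq:N1}, $\xi \in \mathcal{N}_\xi$ is exactly the condition $|\bar\xi-\check\xi| < \rho(\xi)$. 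If $\xi \in \cK\setminus\partial\cK$ then $\check\xi = \bar\xi$ and the left-hand side is $0 < \rho(\xi)$; if $\xi \in \partial\cK$ then $\check\xi = \hat\xi$, and combining (2) with the strict inequality $\dl(\xi) < \rho(\xi)$ from \eqref{eq:r} yields $|\bar\xi-\hat\xi| < \dl(\xi) < \rho(\xi)$. In either case $\xi \in \mathcal{N}_\xi$, so the displayed bound with $\eta = \xi$ gives $|y-\bar\xi| \le \eps(\xi)$ for every $y \in \xi$, that is, $\xi \subset \cl{B}(\bar\xi,\eps(\xi))$.

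There is no serious obstacle here; the argument is bookkeeping once the definitions are in front of us. The single point that deserves a moment of care is the location of the barycenter $\bar\xi$ in the proof of (2): the segment-crossing argument is transparent precisely when $\bar\xi \in \cl{D}$, and a non-convex domain with an awkward cell could in principle place $\bar\xi$ in the exterior of $\cl{D}$. In the settings of interest this does not occur (for instance, convex cells satisfy $\bar\xi \in \xi \subset \cl{D}$, as in Example~\ref{ex:voronoi}, and condition~{\bf (B)} enforces $\bar\xi \in \cl{D}$ directly), so I would either restrict to that regime or note that $\bar\xi \in \cl{D}$ is the only hypothesis the clean argument needs.
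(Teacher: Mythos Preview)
Your argument is exactly the definitional unwinding the paper has in mind: the paper gives no proof of this lemma beyond the sentence ``immediately follows from the definitions,'' and your treatment of the main inequality in (1), of the membership $\xi\in\mathcal{N}_\xi$ via (2) and \eqref{eq:r}, and of (2) via a segment-crossing argument is the natural way to cash that sentence out.

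Your caveat about the location of $\bar\xi$ is not mere paranoia but a genuine point the paper glosses over. As stated, part~(2) can fail when $\bar\xi\notin\cl{D}$: take $D=\{x\in\R^2:1<|x|<10\}$ and a cell $\xi$ equal to a thin full annular strip near the inner boundary; then $\bar\xi=0\notin\cl{D}$, the condition $B(\bar\xi,\dl(\xi))\not\subset D$ is automatic for any $\dl(\xi)>0$, yet $|\bar\xi-\hat\xi|=\text{dist}(0,\partial D)=1$, which exceeds $\dl(\xi)$ whenever $\dl(\xi)<1$. So your proposed resolution---either invoke condition~{\bf (B)} or record $\bar\xi\in\cl{D}$ as a standing hypothesis---is exactly right, and is in fact more careful than the paper itself.
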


 \begin{lemma}\label{lem:edd}
Let $\xi \in \cK$ and suppose $\rho(\xi)>\eps(\xi)$. Then, we have\[ O_{\xi} \subset \cl{D} \cap B(\check{\xi},\rho(\xi)+\eps(\xi)).\]
It also follows that $
\cl{D} \cap B(\check{\xi},\rho(\xi)-\eps(\xi)) \subset O_{\xi}.$
\end{lemma}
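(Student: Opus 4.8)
The plan is to prove both inclusions directly from the definitions in \eqref{eq:N1} and \eqref{eq:N2}, using Lemma~\ref{lem:ed}(1) and nothing more than the triangle inequality. No appeal to smoothness of $D$ or to the partition's local finiteness \eqref{eq:lf} is needed; only the covering property $\cl{D}=\bigcup_{\eta\in\cK}\eta$ and the fact that each cell lies in $\cl{D}$ will be used.

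For the inclusion $O_\xi\subset\cl{D}\cap B(\check\xi,\rho(\xi)+\eps(\xi))$, I would take an arbitrary $y\in O_\xi$ and pick $\eta\in\mathcal{N}_\xi$ with $y\in\eta$. Since $\eta\subset\cl{D}$, we get $y\in\cl{D}$. By Lemma~\ref{lem:ed}(1), $|y-\bar\eta|\le\eps(\xi)$, and by $\eta\in\mathcal{N}_\xi$ we have $\bar\eta\in B(\check\xi,\rho(\xi))$, i.e.\ $|\bar\eta-\check\xi|<\rho(\xi)$. The triangle inequality then gives $|y-\check\xi|\le|y-\bar\eta|+|\bar\eta-\check\xi|<\eps(\xi)+\rho(\xi)$, which is exactly $y\in B(\check\xi,\rho(\xi)+\eps(\xi))$. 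I note that this half of the lemma requires no hypothesis relating $\rho(\xi)$ and $\eps(\xi)$.

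For the inclusion $\cl{D}\cap B(\check\xi,\rho(\xi)-\eps(\xi))\subset O_\xi$, I would first observe that there is nothing to prove unless $\rho(\xi)>\eps(\xi)$, since otherwise the left-hand set is empty. So assume $\rho(\xi)>\eps(\xi)$ and take $y\in\cl{D}\cap B(\check\xi,\rho(\xi)-\eps(\xi))$. Because $\cl{D}=\bigcup_{\eta\in\cK}\eta$, there exists a cell $\eta\in\cK$ with $y\in\eta$. The key step is to notice that $y\in\eta\cap B(\check\xi,\rho(\xi))$ (indeed $|y-\check\xi|<\rho(\xi)-\eps(\xi)<\rho(\xi)$), so $\eta\in\widetilde{\mathcal{N}}_\xi$; hence, by the definition of $\eps(\xi)$ in \eqref{eq:N2} as a maximum over $\mathcal{N}_\xi\cup\widetilde{\mathcal{N}}_\xi$, we get $|y-\bar\eta|\le\eps(\xi)$. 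Combining, $|\bar\eta-\check\xi|\le|\bar\eta-y|+|y-\check\xi|<\eps(\xi)+(\rho(\xi)-\eps(\xi))=\rho(\xi)$, so $\bar\eta\in B(\check\xi,\rho(\xi))$, i.e.\ $\eta\in\mathcal{N}_\xi$, and therefore $y\in\eta\subset O_\xi$.

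There is no serious obstacle here; the one substantive point — the reason the definition of $\eps(\xi)$ in \eqref{eq:N2} is phrased with $\mathcal{N}_\xi\cup\widetilde{\mathcal{N}}_\xi$ rather than $\mathcal{N}_\xi$ alone — is that in the second inclusion the cell $\eta\ni y$ is a priori known only to meet the ball $B(\check\xi,\rho(\xi))$, not to have its barycenter inside it, so the bound $|y-\bar\eta|\le\eps(\xi)$ must be extracted from the $\widetilde{\mathcal{N}}_\xi$ part of the definition. Everything else is routine use of the triangle inequality.
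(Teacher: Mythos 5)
Your proof is correct and follows essentially the same route as the paper's: the first inclusion via Lemma~\ref{lem:ed}(1) and the triangle inequality, and the second via the covering property of $\cK$, membership of the covering cell in $\widetilde{\mathcal{N}}_\xi$, and the bound $|y-\bar\eta|\le\eps(\xi)$ from the definition in \eqref{eq:N2}. The only cosmetic difference is that the paper works with the separate quantities $\eps_1(\xi)$ and $\eps_2(\xi)$ (the maxima over $\mathcal{N}_\xi$ and $\widetilde{\mathcal{N}}_\xi$ respectively) before passing to $\eps(\xi)$, whereas you use $\eps(\xi)$ directly throughout; both are valid.
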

\begin{proof}
Let $\eps_1(\xi)=\max_{\eta \in \mathcal{N}_{\xi} }\sup_{y \in \eta}|y-\bar{\eta}|$.  Let $x \in O_{\xi}$.  Then, there exists $\eta \in \mathcal{N}_\xi$ such that $x \in \eta$. In particular, $x \in \cl{D}$. Lemma~3.1 implies $|x-\bar{\eta}|\le \eps_1(\xi)$. From the definition of $\mathcal{N}_\xi$, we have $|\bar{\eta}-\check{\xi}|<\rho(\xi)$.  Therefore, $x \in B(\check{\xi},\rho(\xi)+\eps_1(\xi))$. Since $\eps(\xi) \ge \eps_1(\xi)$, we see that
 \[ O_{\xi} \subset \cl{D} \cap B(\check{\xi},\rho(\xi)+\eps(\xi)).\]
 Let $\eps_2(\xi)=\max_{\eta \in \widetilde{\mathcal{N}}_{\xi} }\sup_{y \in \eta}|y-\bar{\eta}|.$
Since $\eps(\xi) \ge \eps_2(\xi)$,  we have $\rho(\xi)>\eps_2(\xi)$. Let $x \in \cl{D} \cap B(\check{\xi},\rho(\xi)-\eps_2(\xi))$. Because $\cl{D}=\bigcup_{\eta \in \cK}\eta$, there exists $\eta \in \cK$ such that $x \in \eta$. 
Since $x \in \eta \cap B(\check{\xi},\rho(\xi)-\eps_2(\xi))$,  we have $\eta \in \widetilde{\mathcal{N}}_{\xi}$.  Thus, we have $|x-\bar{\eta}| \le \eps_2(\xi)$ and
\begin{align*}
|\bar{\eta}-\check{\xi}|&\le |\bar{\eta}-x|+|x-\check{\xi}| 
\\
&<\eps_2(\xi)+(\rho(\xi)-\eps_2(\xi)) =\rho(\xi).
\end{align*}
This implies $\eta \in \mathcal{N}_\xi$ and $x \in \eta$. Therefore,  $B(\check{\xi},\rho(\xi)-\eps_2(\xi)) \subset O_\xi$.
Since $\eps(\xi)\ge \eps_2(\xi)$, we  obtain that $
\cl{D} \cap B(\check{\xi},\rho(\xi)-\eps(\xi)) \subset O_{\xi}$.
\end{proof}

 In this section and those that follow,  $C$ (resp.\  $C_D$) denotes a positive constant depending only on $d$ and $c_1$ (resp.\  $D$ and $c_2$). 
For $A,B \subset \R^d$,   let $A \bigtriangleup B$ denote the symmetric difference of $A$ and $B$: $A \bigtriangleup B=(A\setminus B) \cup (B\setminus A)$.  For $x \in \partial D$ and $r>0$,  we define $B_{D}(x,r)=D \cap B(x,r)$.  
\begin{lemma}\label{lem:trngl}
\begin{itemize}
\item[{\rm (1)}] Assume \eqref{eq:A}.  Then,  there exists $C>0$ such that
\[
m\left(O_{\xi} \bigtriangleup B(\bar{\xi},\rho(\xi)) \right)  \le C\eps(\xi)\rho(\xi)^{d-1},\quad \xi \in \cK \setminus  \partial \cK.
\]
\item[{\rm (2)}]  Assume \eqref{eq:B}.  Then,  there exists $C>0$ such that
\[
m\left(O_{\xi} \bigtriangleup B_{D}(\hat{\xi},\rho(\xi)) \right) \le C\eps(\xi)\rho(\xi)^{d-1},\quad \xi \in  \partial \cK.
\]
\end{itemize}
\end{lemma}
\begin{proof}
To simplify the notation,  let $\rho=\rho(\xi)$ and $\eps=\eps(\xi)$.  \\
(1): Since $c_1 \in (0,1)$,  we have $\rho>\eps$. Since 
$\xi \in \cK \setminus \partial \cK$,  we have $B(\bar{\xi},\rho) \subset D$.  Thus,  Lemma~\ref{lem:edd} implies that  \[O_{\xi} \bigtriangleup B(\bar{\xi},\rho) \subset  B(\bar{\xi},\rho+\eps) \setminus B(\bar{\xi},\rho-\eps).\]  Therefore, 
\begin{align}\label{eq:trngl1}
m\left(O_{\xi} \bigtriangleup B(\bar{\xi},\rho) \right)&\le m(B(\bar{\xi},\rho+\eps) \setminus B(\bar{\xi},\rho-\eps))  \\
&=\omega_d  \{(\rho+\eps)^d-(\rho-\eps)^d \}. \notag
\end{align}
We also obtain that
\begin{align}\label{eq:trngl2}
&(\rho+\eps)^d-(\rho-\eps)^d =\int_{\rho-\eps}^{\rho+\eps} (t^{d})' \,dt   \\
&\le \int_{\rho-\eps}^{\rho+\eps} d(\rho+\eps)^{d-1} \,dt=2d\eps(\rho+\eps)^{d-1}\le 2d\eps(1+c_1)^{d-1}\rho^{d-1}. \notag
\end{align}
In the last inequality,  we used \eqref{eq:A}.   The claim follows from \eqref{eq:trngl1} and \eqref{eq:trngl2}.\\
(2):   Since $c_2 \in (0,1)$,  we also obtain  $\rho>\eps$. It follows from Lemma~\ref{lem:edd} that
\begin{align*}O_{\xi} \bigtriangleup B_D(\hat{\xi},\rho) &\subset \cl{D} \cap [B(\hat{\xi},\rho+\eps) \setminus B(\hat{\xi},\rho-\eps)].
\end{align*}
Therefore, 
\[
m(O_{\xi} \bigtriangleup B_D(\hat{\xi},\rho)) \le  m(B(\hat{\xi},\rho+\eps) \setminus B(\hat{\xi},\rho-\eps)).
\]
The rest of the proof is similar to that of (1).
\end{proof}

Let $S^{d-1}:=\{x \in \R^d \mid |x|=1\}$ be the unit sphere in $\R^d$ and $\sigma$ the $(d-1)$-dimensional Hausdorff measure on $\R^d.$ Using polar coordinates,  we obtain that for any $r>0$,
\begin{align}
\fint_{B(0,r)}|y|^2\,m(dy)&=\frac{1}{m(B(0,r))}\int_{S^{d-1}}\left(\int_{0}^{r}s^{2}s^{d-1}\,ds\right)\sigma(du) \label{eq:a0} \\
&=\frac{1}{\omega_d r^d} \frac{\sigma(S^{d-1})r^{d+2}}{d+2}=\frac{d r^2}{d+2}. \notag  
\end{align}
In the last equation,  we used the fact that $\sigma(S^{d-1})/\omega_d=d$.   We will use  \eqref{eq:a0} to prove the following lemma.
\begin{lemma}\label{lemma:span}
Assume \eqref{eq:A}.  Then,  there exists $C>0$ such that for any $\xi \in \cK \setminus  \partial \cK$,
\begin{align}
& \min_{u \in S^{d-1}}\langle u,  Q(\xi) u \rangle \ge C\rho(\xi)^2, \quad q(\xi) \ge C \rho(\xi)^2, \quad \text{and} \quad |b(\xi)|\le C\eps(\xi).  \label{eq:span}
 \end{align}
\end{lemma}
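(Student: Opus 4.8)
The plan is to reduce all three estimates to the comparison between the discrete measure $\sum_{\eta \in \mathcal{N}_\xi} \frac{m(\eta)}{m(O_\xi)}\,\dl_{\bar\eta}$ appearing in $Q(\xi)$, $q(\xi)$, and $b(\xi)$, and the normalized Lebesgue measure on the ball $B(\bar\xi,\rho(\xi))$; since $\xi \in \cK \setminus \partial\cK$ we have $\check\xi = \bar\xi$. Abbreviate $\rho = \rho(\xi)$, $\eps = \eps(\xi)$, and write $\mu$ for the measure $\fint_{O_\xi}(\cdot)\,m$. First I would show that for a Lipschitz (or smooth, bounded) test function $g$ on $\cl B(\bar\xi, \rho + \eps)$,
\[
\left| \sum_{\eta \in \mathcal{N}_\xi} g(\bar\eta)\,\frac{m(\eta)}{m(O_\xi)} - \fint_{B(\bar\xi,\rho)} g(y)\,m(dy) \right|
\]
is controlled by two contributions: the error from replacing $g(\bar\eta)$ by $\fint_\eta g\,dm$ on each cell, which by Lemma~\ref{lem:ed}(1) (diam of each $\eta$ is $\le 2\eps$) costs at most $\mathrm{Lip}(g)\cdot\eps$ since $\bar\eta = \fint_\eta y\,m(dy)$, and the error from the symmetric difference $O_\xi \bigtriangleup B(\bar\xi,\rho)$, which by Lemma~\ref{lem:trngl}(1) has measure $\le C\eps\rho^{d-1}$, hence contributes at most $\|g\|_\infty \cdot C\eps\rho^{d-1}/m(B(\bar\xi,\rho)) = C'\|g\|_\infty\,\eps/\rho$ after dividing by $m(O_\xi) \asymp \rho^d$ (and one also needs $m(O_\xi)/m(B(\bar\xi,\rho)) \to 1$, again from Lemma~\ref{lem:trngl}(1) together with \eqref{eq:A}).

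Applying this with $g(y) = \langle u, y - \bar\xi\rangle\langle y - \bar\xi, u\rangle$ for a fixed $u \in S^{d-1}$ (so $\mathrm{Lip}(g) \le C\rho$ and $\|g\|_\infty \le \rho^2$ on the relevant ball) gives
\[
\langle u, Q(\xi) u\rangle \ge \fint_{B(0,\rho)}\langle u, y\rangle^2\,m(dy) - C\eps\rho = \frac{\rho^2}{d+2} - C\eps\rho
\]
by \eqref{eq:a0} (the cross term vanishes: $\fint_{B(0,\rho)}\langle u,y\rangle^2 m(dy) = \rho^2/(d+2)$ by rotational symmetry, splitting the average of $|y|^2$ equally among coordinates). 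Since \eqref{eq:A} gives $\eps/\rho \le c_1 < \min\{t>0: a_1(t)=0\}$, and $a_1(c_1) > 0$ forces $\frac{1}{d+2} - c_1 \cdot(\text{the relevant constant}) > 0$ after tracking the $(1+c_1)^{d-1}$ factors from Lemma~\ref{lem:trngl}, we get $\langle u, Q(\xi)u\rangle \ge C\rho^2$ uniformly in $u$. Taking $u$ to range over an eigenbasis of the symmetric matrix $Q(\xi)$ and averaging gives $q(\xi) = \frac1d\mathrm{Tr}[Q(\xi)] \ge C\rho^2$. For $b(\xi) = \sum_{\eta}(\bar\eta - \bar\xi)\frac{m(\eta)}{m(O_\xi)}$, apply the comparison estimate coordinatewise with $g(y) = y_i - \bar\xi_i$, noting that $\fint_{B(\bar\xi,\rho)}(y - \bar\xi)\,m(dy) = 0$ by symmetry; this yields $|b(\xi)| \le C\eps$ directly (here $\mathrm{Lip}(g) = 1$ and $\|g\|_\infty \le \rho$, and the symmetric-difference term gives $\rho \cdot C\eps\rho^{d-1}/\rho^d = C\eps$).

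The main obstacle is making the constant bookkeeping in the lower bound for $\langle u, Q(\xi)u\rangle$ genuinely uniform: one must verify that the negative error term, which carries factors like $\eps(1+c_1)^{d-1}$ and corrections from $m(O_\xi)/m(B(\bar\xi,\rho)) = 1 + O(\eps/\rho)$, stays strictly below the main term $\rho^2/(d+2)$ for \emph{all} $\xi$, and this is exactly what the choice $c_1 \in (0,\min\{t>0: a_1(t)=0\})$ with $a_1(t) = \frac{1}{d+2}\frac{(1-t)^{d+2}}{(1+t)^d} - t^2$ is engineered to guarantee — so the real work is checking that the crude bounds above can be sharpened (e.g.\ using $O_\xi \supset B(\bar\xi,\rho-\eps)$ so that $\langle u,Q(\xi)u\rangle \ge \frac{m(B(\bar\xi,\rho-\eps))}{m(O_\xi)}\fint_{B(0,\rho-\eps)}\langle u,y\rangle^2 - (\text{diam correction})$, producing precisely the $\frac{(1-t)^{d+2}}{(1+t)^d}$ shape) to land inside the region where $a_1$ is positive. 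Everything else is a routine Taylor/symmetry computation.
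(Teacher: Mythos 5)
Your overall strategy is the paper's: reduce everything to comparing the discrete measure $\sum_{\eta}\frac{m(\eta)}{m(O_\xi)}\delta_{\bar\eta}$ with normalized Lebesgue measure, use the inclusions $B(\bar{\xi},\rho-\eps)\subset O_\xi\subset B(\bar{\xi},\rho+\eps)$ together with \eqref{eq:a0} for the quadratic form, and use Lemma~\ref{lem:trngl}(1) against $\fint_{B(\bar{\xi},\rho)}(y-\bar{\xi})\,m(dy)=0$ for $b(\xi)$. The $b(\xi)$ and $q(\xi)$ parts are essentially identical to the paper's argument and are fine.

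There is, however, one concrete gap in the $Q(\xi)$ lower bound, and it is exactly the step you defer as ``the real work.'' Your per-cell discretization error for $g(y)=\langle u,y-\bar{\xi}\rangle^2$ is stated as $\mathrm{Lip}(g)\cdot\eps\sim\rho\eps$, which after the ball-inclusion sharpening yields a lower bound of the shape $\bigl(\tfrac{1}{d+2}\tfrac{(1-t)^{d+2}}{(1+t)^d}-Ct\bigr)\rho^2$ with $t=\eps/\rho$ and $C\ge 2$. But the lemma must hold for every $c_1$ up to the first zero of $a_1(t)=\tfrac{1}{d+2}\tfrac{(1-t)^{d+2}}{(1+t)^d}-t^2$, and since $Ct>t^2$ near that zero, your error term can swamp the main term there; the Lipschitz bound is quantitatively too weak to close the argument under hypothesis \eqref{eq:A} as stated. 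The fix is the exact second-order identity that the paper records as \eqref{eq:a2}: because $\bar{\eta}$ is the center of gravity of $\eta$ and $g$ is quadratic, the linear term in the Taylor expansion averages to zero on each cell, giving
\begin{equation*}
\fint_{\eta}\langle y-\bar{\xi},u\rangle^2\,m(dy)-\langle\bar{\eta}-\bar{\xi},u\rangle^2=\fint_{\eta}\langle y-\bar{\eta},u\rangle^2\,m(dy)\in[0,\eps^2],
\end{equation*}
so the per-cell error is $O(\eps^2)$ with a definite sign, not $O(\rho\eps)$. Summing, $\langle u,Q(\xi)u\rangle\ge\fint_{O_\xi}\langle y-\bar{\xi},u\rangle^2\,m(dy)-\eps^2$, and combining with the ball inclusions produces exactly $a_1(\eps/\rho)\rho^2$, which is bounded below by a positive multiple of $\rho^2$ on $[0,c_1]$ by the choice of $c_1$. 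You invoke ``since $\bar{\eta}=\fint_\eta y\,m(dy)$'' in justifying the per-cell error, which suggests you sensed this cancellation, but the bound you actually wrote does not use it; without the $\eps^2$ improvement the constant in \eqref{eq:span} cannot be obtained for all admissible $c_1$.
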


\begin{proof}
Let $\xi \in \cK \setminus  \partial \cK$ and let $u \in S^{d-1}$.  A direct computation shows that
\begin{align}
 &\langle u, Q(\xi)u  \rangle   =\sum_{j=1}^{N_\xi} \langle  \overline{\eta^{(j)}}-\bar{\xi},u \rangle^2 \frac{m(\eta^{(j)})}{m(O_{\xi})} =\sum_{\eta \in \mathcal{N}_{\xi}}\langle \bar{\eta}-\bar{\xi},u \rangle^2 \frac{m(\eta)}{m(O_{\xi})}.
 \label{eq:a1}
\end{align}
For any $\eta \in \mathcal{N}_\xi$, we have  
\begin{align}\label{eq:a2}
&\fint_{\eta}(\langle y-\bar{\xi},u \rangle^2- \langle y-\bar{\eta},u \rangle^2)\,m(dy)  \\
&=\fint_{\eta}(\langle y,u \rangle^2-2\langle y,u \rangle\langle \bar{\xi},u \rangle+\langle \bar{\xi},u \rangle^2)\,m(dy) \notag \\
&\quad- \fint_{\eta}(\langle y,u \rangle^2-2\langle y,u \rangle\langle \bar{\eta},u \rangle+\langle \bar{\eta},u \rangle^2)\,m(dy)  \notag \\
&=-2\langle \bar{\eta},u \rangle\langle \bar{\xi},u \rangle+\langle \bar{\xi},u \rangle^2+2\langle \bar{\eta},u \rangle^2+\langle \bar{\eta},u \rangle^2 \notag \\
&=\langle \bar{\eta}-\bar{\xi},u \rangle^2. \notag
\end{align}
In the second equation,  we used the fact that $\bar{\eta}$ is the center of gravity of $\eta$.  From \eqref{eq:a1} and  \eqref{eq:a2},  it follows that
\begin{align*}
\langle u, Q(\xi)u  \rangle&=\sum_{\eta \in \mathcal{N}_{\xi}} \left(\fint_{\eta}(\langle y-\bar{\xi},u \rangle^2- \langle y-\bar{\eta},u \rangle^2)\,m(dy) \right)\frac{m(\eta)}{m(O_{\xi})} \\
&=\fint_{O_\xi}\langle y-\bar{\xi},u \rangle^2\,m(dy) -\sum_{\eta \in \mathcal{N}_{\xi}}\left(\fint_{\eta}  \langle y-\bar{\eta},u \rangle^2\,m(dy) \right) \frac{m(\eta)}{m(O_{\xi})}.
\end{align*}
Let $\rho=\rho(\xi)$ and $\eps=\eps(\xi)$. 
From Lemma~\ref{lem:ed}~(1) and  the above equation,
\begin{align} 
&\langle u,Q(\xi)u \rangle \ge \fint_{O_{\xi}}\langle y-\bar{\xi},u \rangle^2\,m(dy)-\eps^2. \label{eq:a3}
\end{align}
From Lemma~\ref{lem:edd},   we have  $
O_{\xi} \supset B(\bar{\xi},\rho-\eps)$
and 
\begin{align}\label{eq:a4}
\int_{O_{\xi}}\langle y-\bar{\xi},u \rangle^2\,m(dy) &\ge  \int_{B(\bar{\xi},\rho-\eps) }\langle y-\bar{\xi},u \rangle^2\,m(dy) \\
&=\frac{1}{d}\int_{B(\bar{\xi},\rho-\eps)}| y-\bar{\xi}|^2\,m(dy) .\notag
\end{align}
In the second line,  we used the fact that $u \in S^{d-1}.$
Lemma~\ref{lem:edd} also implies
\begin{align}\label{eq:a5}
O_\xi \subset B(\bar{\xi},\rho+\eps).
\end{align}
From \eqref{eq:a0},  \eqref{eq:a3},  \eqref{eq:a4},  and \eqref{eq:a5},  it follows that
\[
 \langle u, Q(\xi)u  \rangle  \ge  \frac{1}{d+2}  \frac{(\rho-\eps)^{d+2}}{(\rho+\eps)^d}-  \eps^2=a_1(\eps/\rho)\rho^2 .\]
We now use \eqref{eq:A} to see that there exists $C>0$ such that $
\langle u,Q(\xi)u \rangle \ge  C\rho^2.$
Since $u \in S^{d-1}$ is arbitrarily chosen,  this implies  the first inequality of \eqref{eq:span}.  This also implies the second inequality  since $q(\xi)$ is equal to the sum of the eigenvalues of $Q(\xi)/d$.

We lastly prove the third inequality of \eqref{eq:span}.  Note that \[
b(\xi)=\sum_{\eta \in \mathcal{N}_{\xi}} (\bar{\eta}-\bar{\xi}) \frac{m(\eta)}{m(O_{\xi})}=\fint_{O_\xi}(y-\bar{\xi})\,m(dy).\]
Therefore,  we obtain that 
\begin{align*}
\left|b(\xi)\right| &=\left|\fint_{O_{\xi}}(y-\bar{\xi})\,m(dy)-0\right|=\left|\fint_{O_{\xi}}(y-\bar{\xi})\,m(dy)-\fint_{B(\bar{\xi},\rho)}(y-\bar{\xi})\,m(dy)\right|\\
&\le  \left|\frac{1}{m(O_{\xi})}-\frac{1}{m(B(\bar{\xi},\rho))} \right|\int_{O_{\xi}}|y-\bar{\xi}|\,m(dy)\\
&\quad + \frac{1}{m\left(B(\bar{\xi},\rho)\right)} \left|\int_{O_{\xi} }(y-\bar{\xi})\,m(dy) -\int_{ B(\bar{\xi},\rho)}(y-\bar{\xi})\,m(dy) \right|.
\end{align*}
This implies that
\begin{equation}\label{eq:a8''}
\left|b(\xi)\right| \le  \frac{2m(O_{\xi} \bigtriangleup B(\bar{\xi},\rho))}{m(B(\bar{\xi},\rho))} \sup_{y \in O_\xi \cup B(\bar{\xi},\rho)}|y-\bar{\xi}|.
\end{equation}
From  \eqref{eq:A} and \eqref{eq:a5},  we have $O_\xi \cup B(\bar{\xi},\rho) \subset B(\bar{\xi},(1+c_1)\rho)$  and  \[\sup_{y \in O_\xi \cup B(\bar{\xi},\rho)}|y-\bar{\xi}| \le (1+c_1)\rho.\]  Applying this and Lemma~\ref{lem:trngl}~(1) to \eqref{eq:a8''},  we  complete the proof.
\end{proof}

\begin{lemma}\label{lem:span2}
For any $\xi \in \partial \cK$ and $u \in S^{d-1}$,
\begin{align*}
\langle u, Q(\xi)u \rangle &\ge \fint_{O_{\xi}}  \langle y-\hat{\xi},u \rangle^2 \,m(dy) -\eps(\xi)^2 -2\eps(\xi)\rho(\xi).
 \end{align*}
\end{lemma}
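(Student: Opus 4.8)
The plan is to repeat the computation from the proof of Lemma~\ref{lemma:span}, now with $\check\xi=\hat\xi$, while keeping in mind that $\hat\xi$ need not be the center of gravity of any cell, so the clean cancellation used there is no longer automatic. Fix $\xi\in\partial\cK$ and $u\in S^{d-1}$, and abbreviate $\rho=\rho(\xi)$ and $\eps=\eps(\xi)$. Since $\check\xi=\hat\xi$ for $\xi\in\partial\cK$, the definition \eqref{eq:dQ1} of $Q(\xi)$ gives
\[
\langle u,Q(\xi)u\rangle=\sum_{\eta\in\mathcal{N}_\xi}\langle\bar\eta-\hat\xi,u\rangle^2\,\frac{m(\eta)}{m(O_\xi)}.
\]

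The next step is a per-cell estimate. For $\eta\in\mathcal{N}_\xi$ and $y\in\eta$, I would write $y-\hat\xi=(y-\bar\eta)+(\bar\eta-\hat\xi)$ and expand
\[
\langle y-\hat\xi,u\rangle^2=\langle y-\bar\eta,u\rangle^2+2\langle y-\bar\eta,u\rangle\langle\bar\eta-\hat\xi,u\rangle+\langle\bar\eta-\hat\xi,u\rangle^2.
\]
Taking the mean $\fint_\eta\,\cdot\,m(dy)$ over $\eta$: by Lemma~\ref{lem:ed}~(1) we have $|y-\bar\eta|\le\eps$ for $y\in\eta$, so the first term averages to at most $\eps^2$; and since $\eta\in\mathcal{N}_\xi$ means $\bar\eta\in B(\check\xi,\rho)=B(\hat\xi,\rho)$ by \eqref{eq:N1}, the Cauchy--Schwarz inequality bounds the cross term by $2\eps\rho$ in absolute value. (In fact $\fint_\eta\langle y-\bar\eta,u\rangle\,m(dy)=0$ because $\bar\eta$ is the center of gravity of $\eta$, so the cross term vanishes and one even gets the sharper error $\eps^2$; but this refinement is not needed.) Rearranging,
\[
\langle\bar\eta-\hat\xi,u\rangle^2\ge\fint_\eta\langle y-\hat\xi,u\rangle^2\,m(dy)-\eps^2-2\eps\rho,\qquad\eta\in\mathcal{N}_\xi.
\]

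Finally I would multiply this inequality by $m(\eta)/m(O_\xi)$ and sum over $\eta\in\mathcal{N}_\xi$. Because distinct cells of $\cK$ meet in an $m$-null set and $O_\xi=\bigcup_{\eta\in\mathcal{N}_\xi}\eta$, one has $\sum_{\eta\in\mathcal{N}_\xi}\int_\eta\langle y-\hat\xi,u\rangle^2\,m(dy)=\int_{O_\xi}\langle y-\hat\xi,u\rangle^2\,m(dy)$, so the weighted sum of the first terms is exactly $\fint_{O_\xi}\langle y-\hat\xi,u\rangle^2\,m(dy)$; meanwhile $\sum_{\eta\in\mathcal{N}_\xi}m(\eta)/m(O_\xi)=1$, so the weighted sum of the error terms is $\eps^2+2\eps\rho$. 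Combined with the formula for $\langle u,Q(\xi)u\rangle$ from the first paragraph, this is precisely the asserted inequality. There is no substantial obstacle here — it is a direct computation — so the only point requiring a little care is the splitting of the integral over $O_\xi$ into integrals over the constituent cells, which is just the defining null-overlap property of a partition.
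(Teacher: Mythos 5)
Your proof is correct, and it takes a slightly more direct route than the paper's. The paper first splits $\bar{\eta}-\hat{\xi}=(\bar{\eta}-\bar{\xi})+(\bar{\xi}-\hat{\xi})$, expands $\langle u,Q(\xi)u\rangle$ into three sums, converts $\langle\bar{\eta}-\bar{\xi},u\rangle^2$ and $\langle\bar{\eta}-\bar{\xi},u\rangle$ into cell averages via the identity \eqref{eq:a2}, recombines everything into $\fint_{O_\xi}\langle y-\hat{\xi},u\rangle^2\,m(dy)$, and then bounds the leftover terms by $\eps(\xi)^2+2\dl(\xi)\eps(\xi)\le\eps(\xi)^2+2\eps(\xi)\rho(\xi)$. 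You instead apply the same center-of-gravity cancellation directly at the level of the integrand, writing $y-\hat{\xi}=(y-\bar{\eta})+(\bar{\eta}-\hat{\xi})$ on each cell; this collapses the bookkeeping to a single per-cell identity and, as you note, the cross term vanishes exactly, so your argument actually yields the sharper error $-\eps(\xi)^2$ (the $-2\eps(\xi)\rho(\xi)$ term is not needed). Both proofs rest on the same three ingredients — $\bar{\eta}$ being the center of gravity of $\eta$, the bound $|y-\bar{\eta}|\le\eps(\xi)$ from Lemma~\ref{lem:ed}~(1), and $|\bar{\eta}-\hat{\xi}|<\rho(\xi)$ from the definition of $\mathcal{N}_\xi$ — and your splitting of $\int_{O_\xi}$ into integrals over the cells via the null-overlap property is exactly what the paper does implicitly. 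There is no gap.
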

\begin{proof}
Let $\xi \in  \partial \cK$ and $u \in S^{d-1}$.  
A direct computation shows that 
\begin{align*}
&\langle u, Q(\xi) u  \rangle \\
&=\sum_{\eta \in \mathcal{N}_{\xi}}\langle \bar{\eta}-\hat{\xi},u \rangle^2 \frac{m(\eta)}{m(O_{\xi})}  =\sum_{\eta \in \mathcal{N}_{\xi}}\langle \bar{\eta}-\bar{\xi}+\bar{\xi}-\hat{\xi},u \rangle^2 \frac{m(\eta)}{m(O_{\xi})} \\
&=\sum_{\eta \in \mathcal{N}_{\xi}}\langle \bar{\eta}-\bar{\xi},u \rangle^2 \frac{m(\eta)}{m(O_{\xi})}+2\langle \bar{\xi}-\hat{\xi},u \rangle\sum_{\eta \in \mathcal{N}_{\xi}}\langle \bar{\eta}-\bar{\xi},u \rangle  \frac{m(\eta)}{m(O_{\xi})}+ \langle \bar{\xi}-\hat{\xi},u \rangle^2.
\end{align*}
We note that \eqref{eq:a2} holds even when $\xi \in \partial \cK$.  Thus,  we have for any $\eta \in \mathcal{N}_\xi$,
\begin{align*}
&\langle \bar{\eta}-\bar{\xi},u \rangle^2 =\fint_{\eta}(\langle y-\bar{\xi},u \rangle^2- \langle y-\bar{\eta},u \rangle^2)\,m(dy).
\end{align*} 
It also follows that
\begin{align*}
&\langle \bar{\eta}-\bar{\xi},u \rangle=\fint_{\eta}(\langle y-\bar{\xi},u \rangle- \langle y-\bar{\eta},u \rangle)\,m(dy) \notag. 
\end{align*}
Combining the above four equations gives
\begin{align*}
&\langle u, Q(\xi) u  \rangle\\
&=  \sum_{\eta \in \mathcal{N}_{\xi}}\left( \fint_{\eta}\langle y-\bar{\xi},u \rangle^2\,m(dy) \right) \frac{m(\eta)}{m(O_{\xi})}-\sum_{\eta \in \mathcal{N}_{\xi}} \left( \fint_{\eta} \langle y-\bar{\eta},u \rangle^2\,m(dy) \right) \frac{m(\eta)}{m(O_{\xi})} \\
&\quad +2\langle \bar{\xi}-\hat{\xi},u \rangle\sum_{\eta \in \mathcal{N}_{\xi}} \left(\fint_{\eta}\langle y-\bar{\xi},u \rangle\,m(dy)  \right) \frac{m(\eta)}{m(O_{\xi})} \\
&\quad -2\langle \bar{\xi}-\hat{\xi},u \rangle\sum_{\eta \in \mathcal{N}_{\xi}} \left(\fint_{\eta} \langle y-\bar{\eta},u \rangle\,m(dy)  \right) \frac{m(\eta)}{m(O_{\xi})} +\langle \bar{\xi}-\hat{\xi},u \rangle^2.
\end{align*}
The sum of the first, third, and fifth terms of the right-hand side is equal to
\begin{align*}
&\fint_{O_{\xi}} \langle y-\bar{\xi},u \rangle^2\,m(dy)+2\langle \bar{\xi}-\hat{\xi},u \rangle \fint_{O_{\xi}} \langle y-\bar{\xi},u \rangle\,m(dy) +\langle \bar{\xi}-\hat{\xi},u \rangle^2 \\
&=\fint_{O_{\xi}} \left( \langle y-\bar{\xi},u \rangle+\langle \bar{\xi}-\hat{\xi},u \rangle \right)^2\,m(dy)=\fint_{O_{\xi}}  \langle y-\hat{\xi},u \rangle^2 \,m(dy).
\end{align*}
Thus,  it suffices to estimate the remainder terms.  From Lemma~\ref{lem:ed},
\begin{align*}
& \left| \sum_{\eta \in \mathcal{N}_{\xi}} \left[\fint_{\eta} \langle y-\bar{\eta},u \rangle^2\,m(dy) +2\langle \bar{\xi}-\hat{\xi},u \rangle \left(\fint_{\eta} \langle y-\bar{\eta},u \rangle\,m(dy)  \right) \right] \frac{m(\eta)}{m(O_{\xi})} \right| \notag \\
& \le \eps(\xi)^2+2\dl(\xi)\eps(\xi).
\end{align*}
Since $\dl(\xi)\le \rho(\xi)$,  we obtain the claim. 
\end{proof}

If $D$ is a $C^{1,\alpha}$-domain for some $\alpha  \in (0,1]$,  it satisfies the uniform cone condition.  This implies that there exists $C_D>0$ such that  
\begin{equation}\label{eq:D}
m\left(D \cap B(x,r) \right) \ge C_Dm\left(B(x,r)\right),\quad x \in \partial D,\,\,r \in (0,1).
\end{equation}
This estimate will be used to prove the following lemma.

\begin{lemma}\label{lem:span22}
Assume that $D$ is a $C^{1,\alpha}$-domain for some $\alpha \in (0,1]$.  Assume in addition that \eqref{eq:B} holds.  Then,  there exists $C_D>0$ such that for any $\xi\in   \partial \cK$,  \begin{align*}
 &\min_{u \in S^{d-1}}\langle u, Q(\xi) u \rangle \ge C_D \rho(\xi)^2, \quad q(\xi) \ge C_D \rho(\xi)^2, \\
&\text{and} \quad \frac{|b(\xi)|}{\rho(\xi)}\le C_D \left( \frac{\eps(\xi)}{\rho(\xi)}+\rho(\xi)^\alpha \right).
 \end{align*}
\end{lemma}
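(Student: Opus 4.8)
The plan is to prove the three assertions in turn, obtaining the first two from Lemma~\ref{lem:span2} and the third from the boundary comparison estimates of Section~3.

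\emph{The quadratic-form and $q$ bounds.} First I would bound $\fint_{O_\xi}\langle y-\hat\xi,u\rangle^2\,m(dy)$ from below for $u\in S^{d-1}$. By the choice of $R_D$ there is $e\in O_\xi$ with $B(e,\rho(\xi)/R_D)\subset O_\xi$; writing $\langle y-\hat\xi,u\rangle=\langle y-e,u\rangle+\langle e-\hat\xi,u\rangle$, the cross term integrates to zero over $B(e,\rho(\xi)/R_D)$ since $e$ is its barycenter, the remaining square term is nonnegative, and $\int_{B(e,r)}\langle y-e,u\rangle^2\,m(dy)=\tfrac1d\int_{B(e,r)}|y-e|^2\,m(dy)=\tfrac{\omega_d r^{d+2}}{d+2}$ by \eqref{eq:a0}. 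With $m(O_\xi)\le\omega_d(\rho(\xi)+\eps(\xi))^d$ from Lemma~\ref{lem:edd}, this gives $\fint_{O_\xi}\langle y-\hat\xi,u\rangle^2\,m(dy)\ge\tfrac{R_D^{-(d+2)}}{d+2}\,\rho(\xi)^2/(1+\eps(\xi)/\rho(\xi))^d$, so Lemma~\ref{lem:span2} yields $\langle u,Q(\xi)u\rangle\ge a_2(\eps(\xi)/\rho(\xi))\,\rho(\xi)^2$. Since $a_2$ is continuous and strictly decreasing with $a_2(0)>0$, and $c_2$ lies below the smallest positive zero of $a_2$, we have $a_2\ge a_2(c_2)>0$ on $[0,c_2]$; with \eqref{eq:B} this proves $\min_{u\in S^{d-1}}\langle u,Q(\xi)u\rangle\ge C_D\rho(\xi)^2$, and then $q(\xi)=\tfrac1d\operatorname{Tr}[Q(\xi)]\ge C_D\rho(\xi)^2$ because the arithmetic mean of the eigenvalues of $Q(\xi)$ dominates its smallest eigenvalue.

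\emph{The bound on $b(\xi)$.} Using $\sum_{\eta\in\mathcal{N}_\xi}\bar\eta\,m(\eta)=\int_{O_\xi}y\,m(dy)$ I would first rewrite $b(\xi)=\fint_{O_\xi}(y-\hat\xi)\,m(dy)-\beta_d\rho(\xi)\nu(\hat\xi)$. One may assume $\rho(\xi)<R\wedge1$, since otherwise $|b(\xi)|\le(1+c_2+\beta_d)\rho(\xi)$ and $\rho(\xi)^\alpha\ge(R\wedge1)^\alpha$, so the claimed inequality holds after enlarging $C_D$. Then I would compare $\fint_{O_\xi}(y-\hat\xi)\,m(dy)$ with $\fint_{B_D(\hat\xi,\rho(\xi))}(y-\hat\xi)\,m(dy)$ by the same two-term estimate as in the derivation of \eqref{eq:a8''}, invoking Lemma~\ref{lem:trngl}~(2) for $m(O_\xi\bigtriangleup B_D(\hat\xi,\rho(\xi)))\le C\eps(\xi)\rho(\xi)^{d-1}$, \eqref{eq:D} for $m(B_D(\hat\xi,\rho(\xi)))\ge C_D\rho(\xi)^d$, and Lemma~\ref{lem:edd} for $\sup_{O_\xi\cup B_D(\hat\xi,\rho(\xi))}|y-\hat\xi|\le(1+c_2)\rho(\xi)$; this bounds the difference by $C_D\eps(\xi)$. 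Next, in the coordinate system $CS_{\hat\xi}$ of Definition~\ref{defn:Dc1a}, where $\hat\xi=0$, $\nu(\hat\xi)=e_d$, and $D\cap B(0,\rho(\xi))=\{y\in B(0,\rho(\xi)):y_d>F_{\hat\xi}(y')\}$, I would use $F_{\hat\xi}(0)=0$, $\nabla F_{\hat\xi}(0)=0$ and the H\"older bound on $\nabla F_{\hat\xi}$ to get $|F_{\hat\xi}(y')|\le\tfrac{C}{1+\alpha}|y'|^{1+\alpha}$, so that the symmetric difference between $D\cap B(0,\rho(\xi))$ and the half-ball $H:=\{y\in B(0,\rho(\xi)):y_d>0\}$ lies in $\{|y_d|\le\tfrac{C}{1+\alpha}|y'|^{1+\alpha}\}$ and has measure at most $C_D\rho(\xi)^{d+\alpha}$. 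A further application of the two-term estimate then shows that $\fint_{B_D(\hat\xi,\rho(\xi))}(y-\hat\xi)\,m(dy)$ is within $C_D\rho(\xi)^{1+\alpha}$ of $\fint_H y\,m(dy)$, which equals $\beta_d\rho(\xi)e_d=\beta_d\rho(\xi)\nu(\hat\xi)$ by the elementary half-space integral computed in Appendix~A (cf.\ \eqref{eq:uph1}). Combining the two comparisons via the triangle inequality gives $|b(\xi)|\le C_D(\eps(\xi)+\rho(\xi)^{1+\alpha})$, and dividing by $\rho(\xi)$ completes the proof.

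\emph{Main obstacle.} The delicate part is the half-space comparison: checking $\fint_H y\,m(dy)=\beta_d\rho(\xi)\nu(\hat\xi)$ and estimating $m\bigl((D\cap B(\hat\xi,\rho(\xi)))\bigtriangleup H\bigr)$. This is precisely where the $C^{1,\alpha}$-regularity of $\partial D$ enters and where the $\rho(\xi)^\alpha$ term originates; the remaining steps are adaptations of the computations already carried out for interior cells in Lemmas~\ref{lemma:span} and~\ref{lem:trngl}.
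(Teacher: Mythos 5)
Your proposal is correct and follows essentially the same route as the paper: the quadratic-form bound via the inscribed ball $B(e,\rho(\xi)/R_D)$ and Lemma~\ref{lem:span2}, and the bound on $b(\xi)$ via the two comparisons $O_\xi\leftrightarrow B_D(\hat\xi,\rho(\xi))$ (Lemma~\ref{lem:trngl}~(2) and \eqref{eq:D}) and $B_D(\hat\xi,\rho(\xi))\leftrightarrow B_+(\hat\xi,\rho(\xi))$, the latter being exactly the content of Lemma~\ref{lem:ub}, which you re-derive inline instead of citing. Your explicit reduction to the case $\rho(\xi)<R\wedge 1$ is a welcome extra detail, since the paper's proof invokes Lemma~\ref{lem:ub} (stated only for $r<R\wedge 1$) without recording that restriction in the statement of the lemma.
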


\begin{proof}
Let $\xi \in \partial \cK$  and $\rho=\rho(\xi)$.  From the definition of $R_D$, there exists $e \in O_\xi $ such that   $B(e,\rho/R_D) \subset O_{\xi}$.   Let $r=\rho/R_D$ and $B=B(e,r)$. For $u \in S^{d-1}$,  we have 
\begin{align}\label{eq:a10} 
&\int_{O_{\xi}}  \langle y-\hat{\xi},u \rangle^2 \,m(dy)\ge \int_{B} \langle y-\hat{\xi},u \rangle^2 \,m(dy) \\
&=\int_{B} \langle (y-e)-(\hat{\xi}-e),u \rangle^2 \,m(dy)   \notag \\
&=\int_{B}  \langle y-e,u \rangle^2\,m(dy) -  2\langle \hat{\xi}-e,u \rangle \int_{B}  \langle y-e,u \rangle \,m(dy) \notag \\
&\quad+\int_{B}  \langle \hat{\xi}-e,u \rangle^2  \,m(dy)  \notag \\
&= \int_{B}  \langle y-e,u \rangle^2\,m(dy) +0 +\int_{B}  \langle \hat{\xi}-e,u \rangle^2  \,m(dy) \notag \\
&\ge  \int_{B}  \langle y-e,u \rangle^2\,m(dy)=\frac{1}{d}\int_{B}|y-e|^2\,m(dy). \notag 
\end{align}
Let $\eps=\eps(\xi)$.  Lemma~\ref{lem:edd} implies $O_{\xi} \subset B(\hat{\xi},\rho+\eps)$.  Thus,  using \eqref{eq:a0} and \eqref{eq:a10},  we have
\begin{align}
\fint_{O_{\xi}}  \langle y-\hat{\xi},u \rangle^2 \,m(dy)
&\ge  \frac{m(B(e,r))}{m( \cl{B}(\hat{\xi},\rho+\eps))} \frac{r^2}{d+2}= \frac{1}{d+2}\frac{ (\rho/R_D)^{d+2}}{(\rho+\eps)^d} .\label{eq:a12}
\end{align}
From Lemma~\ref{lem:span2} and \eqref{eq:a12},  it follows that
\begin{align*}
\langle u, Q(\xi) u \rangle   
\notag \ge  \frac{1}{d+2}\frac{ (\rho/R_D)^{d+2}}{(\rho+\eps)^d} -\eps^2 -2\eps \rho=a_2(\eps/\rho)\rho^2.
 \end{align*}
Since $u \in S^{d-1}$ is arbitrarily chosen,  this and \eqref{eq:B} imply that there exists $C_D>0$ such that \[\min_{u \in S^{d-1}}\langle u, Q(\xi) u \rangle \ge C_D\rho^2.\]
 Since  $q(\xi)$ is equal to the sum of the eigenvalues of $Q(\xi)/d$,   we also see that  $q(\xi)\ge C_D \rho^2$.   From the same argument as in the proof of Lemma~\ref{lemma:span},  it follows that
\begin{align*}
\left|b(\xi) \right|&=\left|\fint_{O_{\xi}}(y-\hat{\xi})\,m(dy)- \fint_{B_{D}(\hat{\xi},\rho)}(y-\hat{\xi})\,m(dy)\right| \notag  \\
&\quad +\left| \fint_{B_{D}(\hat{\xi},\rho)}(y-\hat{\xi})\,m(dy)-\beta_d\rho\nu(\hat{\xi}) \right| \notag \\
& \le 2\left(1+c_2\right)\rho \frac{m(O_{\xi} \bigtriangleup B_D(\hat{\xi},\rho))}{m(B_D(\hat{\xi},\rho))} +\left| \fint_{B_{D}(\hat{\xi},\rho)}(y-\hat{\xi})\,m(dy)-\beta_d \rho\nu(\hat{\xi}) \right|.
\end{align*}
Applying Lemmas~\ref{lem:trngl}~(2) and \ref{lem:ub}  to the above inequality,  we have $C_D>0$ such that
\begin{align*}
\frac{\left|b(\xi) \right|}{\rho}& \le  \frac{C_D \eps \rho^{d-1}}{m(B_D(\hat{\xi},\rho))}+C_D \rho^{\alpha}.
\end{align*}
 This and \eqref{eq:D} imply the third desired inequality.
\end{proof}

Let $\xi \in \cK$,  and  let $ \mathcal{N}_{\xi}  $ be expressed  as  $\mathcal{N}_{\xi}=\{\eta^{(1)},\ldots,\eta^{(N_\xi)}\}$.   Let  $\widetilde{A}(\xi)$ be the  $d \times N_\xi$ matrix  whose $(i,j)$-th element is given by
\begin{align*}
        &(\cl{\eta^{(j)}}_i-\check{\xi}_i) \sqrt{\frac{m(\eta^{(j)})}{m(O_{\xi})}}.
\end{align*}
We write $\widetilde{A}^{+}(\xi)$ for the generalized inverse matrix of $\widetilde{A}(\xi)$.   Suppose \eqref{eq:A} or \eqref{eq:B} accordingly as $\xi \in \cK \setminus \partial \cK$ or $\xi \in \partial \cK$.  Since  $\widetilde{A}(\xi){}^{\,t\!}\widetilde{A}(\xi)=Q(\xi)$,   Lemmas~\ref{lemma:span} and \ref{lem:span22}  imply  that the rank of $\widetilde{A}(\xi){}^{\,t\!}\widetilde{A}(\xi)$ is equal to $d$.  Therefore,  we have
\[\widetilde{A}^{+}(\xi)={}^{\,t\!}\widetilde{A}(\xi) (\widetilde{A}(\xi) {}^{\,t\!}\widetilde{A}(\xi))^{-1}={}^{\,t\!}\widetilde{A}(\xi)Q(\xi)^{-1}.
\]
Let $D(\xi)$ denote the diagonal matrix of size $N_\xi$ whose $(k,k)$-th element is given by 
$\sqrt{m(\eta^{(k)})/m(O_{\xi})}$.  Then,  we have $A(\xi)=\widetilde{A}(\xi)D(\xi)$ and 
\[
A^{+}(\xi)=D(\xi)^{-1} \widetilde{A}^{+}(\xi)=(D(\xi)^{-1})({}^{\,t\!}\widetilde{A}(\xi)) Q(\xi)^{-1}.
\]
 Applying this equation to \eqref{eq:c0},  we obtain that
\begin{align}
c(\xi)&=A^{+}(\xi)b(\xi)=(D(\xi)^{-1})({}^{\,t\!}\widetilde{A}(\xi)) Q(\xi)^{-1} b(\xi).    \label{eq:a+}
\end{align}
If $\xi \in \mathcal{K} \setminus \partial \cK$, the $(k,i)$-th element of $(D(\xi)^{-1}) ({}^{\,t\!}\widetilde{A}(\xi))$ is equal to 
\begin{equation}\label{eq:a++}
\sqrt{\frac{m(O_\xi)}{m(\eta^{(k)})}} \times (\overline{\eta^{(k)}}_i-\bar{\xi}_i) \sqrt{\frac{m(\eta^{(k)})}{m(O_{\xi})}}=\overline{\eta^{(k)}}_i-\bar{\xi}_i.
\end{equation}
If $\xi \in  \partial \cK$, the $(k,i)$-th element  is equal to 
\begin{equation}\label{eq:a+++}
\sqrt{\frac{m(O_\xi)}{m(\eta^{(k)})}} \times (\overline{\eta^{(k)}}_i-\hat{\xi}_i) \sqrt{\frac{m(\eta^{(k)})}{m(O_{\xi})}}=\overline{\eta^{(k)}}_i-\hat{\xi}_i. 
\end{equation}

We now prove Lemmas~\ref{lem:A} and \ref{lem:B}.
\begin{proof}[Proof of Lemma~\ref{lem:A}]
From Lemma~\ref{lemma:span},  it suffices to prove the first inequality.  Let $\xi \in \cK \setminus  \partial \cK.$ 
From the first and third inequalities of Lemma~\ref{lemma:span},  it follows that the norm of $ Q(\xi)^{-1} b(\xi)$ is bounded above by $C\eps(\xi)/\rho(\xi)^2$ for some $C>0$.   From \eqref{eq:a++},  the absolute value of any entry of $(D(\xi)^{-1}) ({}^{\,t\!}\widetilde{A}(\xi))$ is bounded above by $\rho(\xi)$.  These facts and \eqref{eq:a+} imply $\max_{\eta \in \mathcal{N}_{\xi}}|c(\xi,\eta)|  \le  C \eps(\xi)/\rho(\xi) $. 
\end{proof}

The proof of Lemma~\ref{lem:B} is similar to that of Lemma~\ref{lem:A}.
\begin{proof}[Proof of Lemma~\ref{lem:B}]
From Lemma~\ref{lemma:span},  it suffices to prove the first inequality,  which is obtained by combining \eqref{eq:a+},   \eqref{eq:a+++},  and the first and third inequalities of Lemma~\ref{lemma:span}. 
\end{proof}

Define $\widetilde{Q} \colon \cK \to \R^d \otimes \R^d$ and $\widetilde{q}\colon \cK \to \R$ by \[
\widetilde{Q}(\xi)=\fint_{O_{\xi}}(y-\check{\xi})\otimes (y-\check{\xi})\,m(dy),\quad \widetilde{q}(\xi)=\frac{1}{d}\text{Tr}[\widetilde{Q}(\xi)].\]

\begin{lemma}\label{lem:qtq}
For any $\xi \in \cK$, $
|q(\xi)-\widetilde{q}(\xi)| \le (\eps(\xi)+2\rho(\xi))\eps(\xi).$
\end{lemma}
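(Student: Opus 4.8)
The plan is to reduce the claim to the scalar identity $\text{Tr}[v\otimes v]=|v|^2$ and then exploit that, up to a null set, $O_\xi$ is the disjoint union of the cells in $\mathcal{N}_\xi$. Taking traces in \eqref{eq:dQ1} and in the definition of $\widetilde Q(\xi)$ gives
\[
q(\xi)=\frac1d\sum_{\eta\in\mathcal{N}_\xi}|\bar\eta-\check\xi|^2\,\frac{m(\eta)}{m(O_\xi)},\qquad
\widetilde q(\xi)=\frac1d\fint_{O_\xi}|y-\check\xi|^2\,m(dy).
\]
Since $O_\xi=\bigcup_{\eta\in\mathcal{N}_\xi}\eta$ with $m(\eta\cap\eta')=0$ for distinct $\eta,\eta'\in\mathcal{N}_\xi$ (property (2) of a partition), the second average splits as $\widetilde q(\xi)=\frac1d\sum_{\eta\in\mathcal{N}_\xi}\bigl(\fint_\eta|y-\check\xi|^2\,m(dy)\bigr)\tfrac{m(\eta)}{m(O_\xi)}$. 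Subtracting, the weights $m(\eta)/m(O_\xi)$ (which sum to $1$) are preserved, so it suffices to control $\bigl|\,|\bar\eta-\check\xi|^2-\fint_\eta|y-\check\xi|^2\,m(dy)\,\bigr|$ for each $\eta\in\mathcal{N}_\xi$.

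For a single cell I would write $y-\check\xi=(y-\bar\eta)+(\bar\eta-\check\xi)$, expand the square, and use that $\bar\eta$ is the center of gravity of $\eta$, i.e.\ $\fint_\eta(y-\bar\eta)\,m(dy)=0$; the cross term then drops out and $\fint_\eta|y-\check\xi|^2\,m(dy)=\fint_\eta|y-\bar\eta|^2\,m(dy)+|\bar\eta-\check\xi|^2$. Hence the per-cell difference equals exactly $\fint_\eta|y-\bar\eta|^2\,m(dy)$, which by Lemma~\ref{lem:ed}(1) (applicable since $\eta\in\mathcal{N}_\xi\subset\mathcal{N}_\xi\cup\widetilde{\mathcal{N}}_\xi$) lies in $[0,\eps(\xi)^2]$. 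Summing against the weights yields $|q(\xi)-\widetilde q(\xi)|\le\eps(\xi)^2/d\le\eps(\xi)^2\le(\eps(\xi)+2\rho(\xi))\eps(\xi)$, i.e.\ the asserted estimate (in fact a bit stronger). This argument is uniform in $\xi$ and uses neither \eqref{eq:A}/\eqref{eq:B} nor the distinction between $\xi\in\cK\setminus\partial\cK$ and $\xi\in\partial\cK$, because the cancellation only needs $\bar\eta$ to be the barycenter of $\eta$, not of $\xi$.

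I do not expect a genuine obstacle here; the only care-points are bookkeeping ones — justifying that the average over $O_\xi$ is the convex combination of the averages over the cells (this is exactly where partition property (2) is used), and checking that the barycenter cancellation remains valid in the boundary case $\check\xi=\hat\xi\neq\bar\xi$. If one prefers to bypass the barycenter cancellation altogether, an alternative is to bound the per-cell difference directly via $|\bar\eta-\check\xi|^2-|y-\check\xi|^2=\bigl(|\bar\eta-\check\xi|-|y-\check\xi|\bigr)\bigl(|\bar\eta-\check\xi|+|y-\check\xi|\bigr)$ together with $\bigl||\bar\eta-\check\xi|-|y-\check\xi|\bigr|\le|y-\bar\eta|\le\eps(\xi)$, $|\bar\eta-\check\xi|<\rho(\xi)$, and $|y-\check\xi|\le\rho(\xi)+\eps(\xi)$; this reproduces the factor $\eps(\xi)+2\rho(\xi)$ literally as in the statement.
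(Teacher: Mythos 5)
Your proof is correct, and it follows the same overall strategy as the paper's: reduce to traces, split the average over $O_\xi$ into the weighted averages over the cells $\eta\in\mathcal{N}_\xi$ (using that distinct cells overlap in a null set), and cancel via the barycenter property of $\bar\eta$. The one genuine difference is in the per-cell algebra: the paper inserts the cross term $\fint_{\eta}(y_i-\check{\xi}_i)(\bar{\eta}_i-\check{\xi}_i)\,m(dy)$ and is left with $\fint_{\eta}(y_i-\check{\xi}_i)(\bar{\eta}_i-y_i)\,m(dy)$, which it bounds by $(\eps(\xi)+2\rho(\xi))\eps(\xi)$, whereas you complete the square fully and obtain the exact identity
\[
|\bar{\eta}-\check{\xi}|^2-\fint_{\eta}|y-\check{\xi}|^2\,m(dy)=-\fint_{\eta}|y-\bar{\eta}|^2\,m(dy)\in[-\eps(\xi)^2,0],
\]
yielding the sharper bound $|q(\xi)-\widetilde{q}(\xi)|\le\eps(\xi)^2/d$; both versions are more than sufficient for the stated inequality, and your argument correctly needs no case distinction between interior and boundary cells, just as the paper's does not.
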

\begin{proof}
Let $\xi \in \cK$.    It is straightforward to see that
\begin{align}\label{eq:qtq}
&\text{Tr}[Q(\xi)]-\text{Tr}[\widetilde{Q}(\xi)] \\
&= \sum_{i=1}^d \left(\sum_{\eta \in \mathcal{N}_{\xi}} (\bar{\eta}_i-\check{\xi}_i)^2\, \frac{m(\eta)}{m(O_{\xi})}  -\fint_{O_{\xi}}(y_i-\check{\xi}_i)^2\,m(dy)\right) \notag \\
&= \sum_{i=1}^d  \sum_{\eta \in \mathcal{N}_{\xi}}  \left[(\bar{\eta}_i-\check{\xi}_i)^2 -\fint_{\eta}(y_i-\check{\xi}_i)^2\,m(dy) \right]\frac{m(\eta)}{m(O_{\xi})}  .  \notag
\end{align}
Let $\eta \in \mathcal{N}_\xi$,  and let $i \in \N$ satisfy  $1\le i\le d$. Then, we have
\begin{align}\label{eq:qtq2}
&(\bar{\eta}_i-\check{\xi}_i)^2 -\fint_{\eta}(y_i-\check{\xi}_i)^2\,m(dy) \\
&=(\bar{\eta}_i-\check{\xi}_i)^2- \fint_{\eta}(y_i-\check{\xi}_i)(\bar{\eta}_i-\check{\xi}_i)\,m(dy) \notag \\
&\quad+\fint_{\eta}(y_i-\check{\xi}_i)(\bar{\eta}_i-\check{\xi}_i)\,m(dy) -\fint_{\eta}(y_i-\check{\xi}_i)^2\,m(dy)  \notag \\
&=0+\fint_{\eta}(y_i-\check{\xi}_i)(\bar{\eta}_i-y_i)\,m(dy).   \notag
\end{align}  
In the last equation,  we used the fact that $\bar{\eta}$ is the center of gravity of $\eta.$   We also use  Lemma~\ref{lem:ed}~(1) to see that
\begin{align}
|(y_i-\check{\xi}_i)(\bar{\eta}_i-y_i)| &\le (|y_i-\bar{\eta}_i|+|\bar{\eta}_i-\bar{\xi}_i|+|\bar{\xi}_i-\check{\xi}_i|)|\bar{\eta}_i-y_i| \label{eq:add3} \\
&\le  (\eps(\xi)+\rho(\xi)+\dl(\xi))\eps(\xi)  \notag \\
&\le (\eps(\xi)+2\rho(\xi))\eps(\xi). \notag
\end{align}
Using  \eqref{eq:qtq2}, \eqref{eq:qtq}, and \eqref{eq:add3}, we obtain
\begin{align*}
|\text{Tr}[Q(\xi)]-\text{Tr}[\widetilde{Q}(\xi)]| &\le \sum_{i=1}^d  \left(\sum_{\eta \in \mathcal{N}_{\xi}} \fint_{\eta}|y_i-\check{\xi}_i||\bar{\eta}_i-y_i|\,m(dy) \right)\frac{m(\eta)}{m(O_{\xi})} \\
&\le d(\eps(\xi)+2\rho(\xi))\eps(\xi).
\end{align*}
Dividing  both sides of the above inequality by $d$,  we obtain the conclusion.
\end{proof}

\begin{lemma}\label{lem:qr}
Assume that \eqref{eq:A} holds.  Then,  there exists $C>0$ such that for any $\xi \in \cK \setminus \partial \cK$,
\[
\left|q(\xi)-\frac{ \rho(\xi)^{2} }{d+2}  \right| \le C\eps(\xi)\rho(\xi).
\]
\end{lemma}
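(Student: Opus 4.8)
The plan is to reduce everything to Lemma~\ref{lem:qtq}, the volume estimate Lemma~\ref{lem:trngl}~(1), and the explicit second-moment formula \eqref{eq:a0}. First I would note that for an interior cell $\xi\in\cK\setminus\partial\cK$ one has $\check\xi=\bar\xi$, so $\widetilde q(\xi)=\frac1d\operatorname{Tr}[\widetilde Q(\xi)]=\frac1d\fint_{O_\xi}|y-\bar\xi|^2\,m(dy)$. By \eqref{eq:a0}, $\frac1d\fint_{B(\bar\xi,\rho(\xi))}|y-\bar\xi|^2\,m(dy)=\rho(\xi)^2/(d+2)$, so it suffices to compare the averages of $f(y):=|y-\bar\xi|^2$ over $O_\xi$ and over $B(\bar\xi,\rho(\xi))$, and then to control $|q(\xi)-\widetilde q(\xi)|$.

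For the second point, Lemma~\ref{lem:qtq} gives $|q(\xi)-\widetilde q(\xi)|\le(\eps(\xi)+2\rho(\xi))\eps(\xi)$, and since \eqref{eq:A} yields $\eps(\xi)\le c_1\rho(\xi)\le\rho(\xi)$ this is at most $C\eps(\xi)\rho(\xi)$. For the comparison of averages I would write, with $\rho=\rho(\xi)$, $\eps=\eps(\xi)$, and $B=B(\bar\xi,\rho)$,
\[
\fint_{O_\xi}f-\fint_{B}f
=\Bigl(\tfrac1{m(O_\xi)}-\tfrac1{m(B)}\Bigr)\int_{O_\xi}f+\tfrac1{m(B)}\Bigl(\int_{O_\xi}f-\int_{B}f\Bigr),
\]
exactly as in the proof of the third inequality of Lemma~\ref{lemma:span}. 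Both $\int_{O_\xi}f-\int_Bf$ and $m(O_\xi)-m(B)$ are supported on $O_\xi\bigtriangleup B$, on which \eqref{eq:A} together with Lemma~\ref{lem:edd} gives $O_\xi\cup B\subset B(\bar\xi,(1+c_1)\rho)$ and hence $f\le(1+c_1)^2\rho^2$; meanwhile $m(O_\xi\bigtriangleup B)\le C\eps\rho^{d-1}$ by Lemma~\ref{lem:trngl}~(1), $m(B)=\omega_d\rho^d$, and $m(O_\xi)\ge\omega_d(1-c_1)^d\rho^d$ again by Lemma~\ref{lem:edd}. Feeding these bounds into the two terms above gives $\bigl|\fint_{O_\xi}f-\fint_{B}f\bigr|\le C\eps\rho$, hence $|\widetilde q(\xi)-\rho^2/(d+2)|\le C\eps\rho$, and combining with the previous estimate completes the proof.

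I do not expect a genuine obstacle here: this is a short ``stability of the second moment under a small perturbation of the averaging domain'' argument. The only point requiring care is bookkeeping — verifying that every constant ($C$ in Lemma~\ref{lem:trngl}~(1), the radius blow-up factor $1+c_1$, the volume lower bound $(1-c_1)^d$, and the bound from Lemma~\ref{lem:qtq}) depends only on $d$ and $c_1$, so that the final $C$ does as well — and performing the algebraic rearrangement of the two averages in the same way as in Lemma~\ref{lemma:span}.
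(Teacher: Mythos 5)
Your proposal is correct and follows essentially the same route as the paper: the paper likewise splits $|q(\xi)-\rho^2/(d+2)|$ via $\widetilde q(\xi)$ using Lemma~\ref{lem:qtq}, applies the identity \eqref{eq:a0} for the ball average, and compares $\fint_{O_\xi}$ with $\fint_{B(\bar\xi,\rho)}$ through the same two-term decomposition bounded by $\frac{2m(O_\xi\bigtriangleup B(\bar\xi,\rho))}{m(B(\bar\xi,\rho))}\sup_{O_\xi\cup B(\bar\xi,\rho)}|y-\bar\xi|^2$ together with Lemma~\ref{lem:trngl}~(1) and Lemma~\ref{lem:edd}. The only cosmetic difference is that you invoke a lower bound on $m(O_\xi)$ where the paper lets $m(O_\xi)$ cancel; both are valid and all constants depend only on $d$ and $c_1$.
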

\begin{proof}
Let $\xi \in \cK \setminus  \partial \cK$ and $\rho=\rho(\xi)$.  Using \eqref{eq:a0},  we obtain 
\begin{align} \label{eq:qr}
&d\left| \widetilde{q}(\xi)-\frac{ \rho^{2}}{d+2} \right| =\left| \fint_{O_\xi}|y-\bar{\xi}|^2\,m(dy)-\fint_{B(\bar{\xi},\rho)}|y-\bar{\xi}|^2\,m(dy)  \right|  \\
&\le \left|\frac{1}{m(O_{\xi})}-\frac{1}{m(B(\bar{\xi},\rho))}\right|  \int_{O_\xi} |y-\bar{\xi}|^2 \,m(dy) \notag  \\
&\quad + \frac{1}{m(B(\bar{\xi},\rho))} \left|  \int_{O_{\xi}  } |y-\bar{\xi}|^2 \,m(dy)-\int_{B(\bar{\xi},\rho)} |y-\bar{\xi}|^2  \,m(dy) \right|  \notag \\
&\le \frac{2m(O_{\xi} \bigtriangleup B(\bar{\xi},\rho))}{m(B(\bar{\xi},\rho))}  \sup_{y \in O_{\xi}  \cup B(\bar{\xi},\rho)} |y-\bar{\xi}|^2 . \notag
\end{align}
By using  \eqref{eq:A} and Lemma~\ref{lem:edd},  we  have  $O_{\xi}  \cup B(\bar{\xi},\rho) \subset B(\bar{\xi},(1+c_1)\rho)$ and 
\begin{equation}\label{eq:qrb}
\sup_{y \in O_{\xi}  \cup B(\bar{\xi},\rho)} |y-\bar{\xi}|^2\le (1+c_1)^2\rho^2.
\end{equation}
From \eqref{eq:qr},  \eqref{eq:qrb},   and Lemma~\ref{lem:trngl}~(1),  
we have \[
\left|\widetilde{q}(\xi)-\frac{  \rho(\xi)^{2} }{d+2}  \right| \le C\eps(\xi)\rho(\xi)
\]
for some $C>0$.  This and Lemma~\ref{lem:qtq} complete the proof.
\end{proof}

\begin{lemma}\label{lem:qr2}
Assume that $D$ is a $C^{1,\alpha}$-domain for some $\alpha \in (0,1]$.  Assume in addition that \eqref{eq:B} holds.  Then,  there exists $C_D>0$ such that for any $\xi \in  \partial \cK$ with $\rho(\xi)<R$,
\[
\left|q(\xi)-\frac{\rho(\xi)^{2} }{d+2}\right| \le C_D\left(\eps(\xi)\rho(\xi)+\rho(\xi)^{2+\alpha}\right),\]
where  $R$ is the positive constant in Definition~\ref{defn:Dc1a}.
\end{lemma}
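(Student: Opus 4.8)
The plan is to mirror the proof of Lemma~\ref{lem:qr}, replacing the Euclidean ball by the boundary-adapted set $B_D(\hat\xi,\rho(\xi))$ and absorbing the curvature of $\partial D$ into the extra term $C_D\rho(\xi)^{2+\alpha}$. Throughout write $\rho=\rho(\xi)$ and $\eps=\eps(\xi)$; we may assume $R\le 1$ in Definition~\ref{defn:Dc1a} (shrinking $R$ only strengthens that condition), so that $\rho<R$ permits invoking \eqref{eq:D} with $r=\rho$. First I would reduce $q$ to $\widetilde q$: since $c_2\in(0,1)$, \eqref{eq:B} gives $\eps\le\rho$, so Lemma~\ref{lem:qtq} yields $|q(\xi)-\widetilde q(\xi)|\le(\eps+2\rho)\eps\le 3\eps\rho$; and since $\check\xi=\hat\xi$ for $\xi\in\partial\cK$ and $\operatorname{Tr}[(y-\hat\xi)\otimes(y-\hat\xi)]=|y-\hat\xi|^2$, we have $\widetilde q(\xi)=\tfrac1d\fint_{O_\xi}|y-\hat\xi|^2\,m(dy)$.

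Next I would replace $O_\xi$ by $B_D(\hat\xi,\rho)$ exactly as in Lemma~\ref{lem:qr}. By Lemma~\ref{lem:edd} both $O_\xi$ and $B_D(\hat\xi,\rho)$ lie in $B(\hat\xi,(1+c_2)\rho)$, where $|y-\hat\xi|^2\le(1+c_2)^2\rho^2$; by \eqref{eq:D}, $m(B_D(\hat\xi,\rho))\ge C_D\rho^d$; and by Lemma~\ref{lem:trngl}(2), $m(O_\xi\bigtriangleup B_D(\hat\xi,\rho))\le C\eps\rho^{d-1}$. The two-term splitting of a difference of averages used in Lemmas~\ref{lemma:span} and \ref{lem:qr} then gives
\[
\left|\fint_{O_\xi}|y-\hat\xi|^2\,m(dy)-\fint_{B_D(\hat\xi,\rho)}|y-\hat\xi|^2\,m(dy)\right|\le\frac{2\,m(O_\xi\bigtriangleup B_D(\hat\xi,\rho))}{m(B_D(\hat\xi,\rho))}(1+c_2)^2\rho^2\le C_D\,\eps\rho.
\]

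The remaining step, where the $\rho^{2+\alpha}$ term and the $C^{1,\alpha}$ hypothesis enter, is to evaluate $\fint_{B_D(\hat\xi,\rho)}|y-\hat\xi|^2\,m(dy)$ up to an error $C_D\rho^{2+\alpha}$. Work in the coordinate system $CS_{\hat\xi}$ of Definition~\ref{defn:Dc1a} (origin at $\hat\xi$); since $\rho<R$ we have $B_D(\hat\xi,\rho)=\{y\in B(0,\rho)\mid y_d>F_{\hat\xi}(y')\}$. From $F_{\hat\xi}(0)=0$, $\nabla F_{\hat\xi}(0)=0$ and the Hölder bound on $\nabla F_{\hat\xi}$ one gets $|F_{\hat\xi}(y')|\le C|y'|^{1+\alpha}\le C\rho^{1+\alpha}$ for $|y'|<\rho$, so the symmetric difference of $B_D(\hat\xi,\rho)$ with the half-ball $H=\{y\in B(0,\rho)\mid y_d>0\}$ lies in $\{|y'|<\rho,\ |y_d|\le C\rho^{1+\alpha}\}$; hence $m(B_D(\hat\xi,\rho)\bigtriangleup H)\le C\rho^{d+\alpha}$, which controls both $|m(B_D(\hat\xi,\rho))-m(H)|\le C\rho^{d+\alpha}$ and, since $|y|^2\le\rho^2$ on $B(0,\rho)$, $\bigl|\int_{B_D(\hat\xi,\rho)}|y|^2\,m(dy)-\int_H|y|^2\,m(dy)\bigr|\le C\rho^{d+2+\alpha}$. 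Because $|y|^2$ is invariant under $y_d\mapsto-y_d$, \eqref{eq:a0} gives $\fint_H|y|^2\,m(dy)=\fint_{B(0,\rho)}|y|^2\,m(dy)=\tfrac{d\rho^2}{d+2}$, with $m(H)=\tfrac12\omega_d\rho^d$. A routine quotient estimate — using $m(H)\ge c\rho^d$, $m(B_D(\hat\xi,\rho))\ge C_D\rho^d$ and $\int_H|y|^2\,m(dy)\le C\rho^{d+2}$ — then yields $\bigl|\fint_{B_D(\hat\xi,\rho)}|y-\hat\xi|^2\,m(dy)-\tfrac{d\rho^2}{d+2}\bigr|\le C_D\rho^{2+\alpha}$. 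Dividing by $d$ and combining the three steps by the triangle inequality gives $|q(\xi)-\rho^2/(d+2)|\le C_D(\eps\rho+\rho^{2+\alpha})$, as claimed.

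The main obstacle is this last step: one must set up the graph chart so that $\rho<R$ genuinely presents $B_D(\hat\xi,\rho)$ as a subgraph, extract $|F_{\hat\xi}(y')|\le C|y'|^{1+\alpha}$ from the $C^{1,\alpha}$ data, and thereby bound the symmetric difference with the half-ball by $C\rho^{d+\alpha}$; the one structural point worth isolating is that the reflection symmetry of $|y|^2$ makes the half-ball average coincide exactly with the full-ball average, which is what keeps the leading constant equal to $1/(d+2)$. This boundary estimate is of the same type as the one underlying Lemma~\ref{lem:ub}, and could alternatively be quoted from Appendix~A.
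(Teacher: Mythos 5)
Your proof is correct and follows essentially the same route as the paper: reduce $q$ to $\widetilde q$ via Lemma~\ref{lem:qtq}, compare $O_\xi$ with $B_D(\hat\xi,\rho)$ using Lemma~\ref{lem:trngl}~(2) and \eqref{eq:D}, and then compare with the half-ball, whose exact second-moment average $\rho^2/(d+2)$ is the content of Lemma~\ref{lem:uph2}. The only cosmetic difference is that you carry out two separate average comparisons and re-derive the flattening estimate $m(B_D(\hat\xi,\rho)\bigtriangleup B_{+}(\hat\xi,\rho))\le C\rho^{d+\alpha}$ inline, whereas the paper bounds $m(O_\xi\bigtriangleup B_{+}(\hat\xi,\rho))$ in one step by citing Lemma~\ref{lem:ub}.
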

\begin{proof}
Let $\xi \in \partial \cK$ with $\rho(\xi)<R$.  Let $\rho=\rho(\xi)$ and define \[
B_{+}(\hat{\xi},\rho)=\{y=(y',y_d) \in B(0,\rho) \text{ in $CS_{\hat{\xi}}$} \mid y_d>0\},\] where   $CS_{\hat{\xi}}$ is an orthonormal coordinate system with its origin at $\hat{\xi}$. By using Lemma~\ref{lem:uph2},  we obtain that
\[
d\left| \widetilde{q}(\xi)-\frac{\rho^{2}  }{d+2}\right|=\left| \fint_{O_\xi}|y-\bar{\xi}|^2\,m(dy)-\fint_{B_{+}(\hat{\xi},\rho)}|y-\hat{\xi}|^2\,m(dy)  \right|.
\]
An argument similar to the one in Lemma~\ref{lem:qr}  shows that there exists $C_D>0$ such that
\begin{equation}\label{eq:qr2}
\left| \widetilde{q}(\xi)-\frac{\rho^{2}}{d+2} \right| \le \frac{C_D m(O_{\xi} \bigtriangleup B_{+}(\hat{\xi},\rho))}{m(B_{+}(\hat{\xi},\rho))} \rho^2.
\end{equation}
Lemmas~\ref{lem:trngl} and \ref{lem:ub}  imply that there exists $C_D>0$ such that
\begin{align}\label{eq:qr3}
m(O_{\xi} \bigtriangleup B_{+}(\hat{\xi},\rho)) &\le m(O_{\xi} \bigtriangleup B_{D}(\hat{\xi},\rho))+m(B_{D}(\hat{\xi},\rho) \bigtriangleup B_{+}(\hat{\xi},\rho))   \\
&\le C_D (\eps \rho^{d-1}+\rho^{d+\alpha})  . \notag
\end{align} 
Note that $m(B_{+}(\hat{\xi},\rho))=\omega_d \rho^d/2$.
Thus,  applying  \eqref{eq:qr3}  to  \eqref{eq:qr2} and using Lemma~\ref{lem:qtq},  we obtain the conclusion.
\end{proof}

\begin{lemma}\label{lem:Q/q}
Assume \eqref{eq:A}.   Then,  there exists $C>0$ such that
\begin{align*}
 \left \|\frac{\widetilde{Q}(\xi)}{q(\xi)} -I_d \right\| \le  \frac{C\eps(\xi)}{\rho(\xi)},\quad \xi \in \cK \setminus \partial \cK.
\end{align*}
\end{lemma}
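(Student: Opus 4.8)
The plan is to write
\[
\frac{\widetilde{Q}(\xi)}{q(\xi)}-I_d=\frac{1}{q(\xi)}\bigl(\widetilde{Q}(\xi)-q(\xi)I_d\bigr),
\]
bound the numerator in operator norm, and then divide by $q(\xi)$, which is bounded below by $C^{-1}\rho(\xi)^2$ thanks to Lemma~\ref{lemma:span}. Since $\xi\in\cK\setminus\partial\cK$ means $\check{\xi}=\bar{\xi}$, the matrix $\widetilde{Q}(\xi)-q(\xi)I_d$ is symmetric, so its operator norm equals $\sup_{u\in S^{d-1}}\bigl|\langle u,(\widetilde{Q}(\xi)-q(\xi)I_d)u\rangle\bigr|$; hence it suffices to control the quadratic form uniformly in $u$.

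First I would record that, as in the computation leading to \eqref{eq:a4}, for every $u\in S^{d-1}$ one has $\fint_{B(\bar{\xi},\rho(\xi))}\langle y-\bar{\xi},u\rangle^2\,m(dy)=\frac1d\fint_{B(\bar{\xi},\rho(\xi))}|y-\bar{\xi}|^2\,m(dy)=\rho(\xi)^2/(d+2)$ by \eqref{eq:a0}. Writing $\rho=\rho(\xi)$, $\eps=\eps(\xi)$, and
\[
\langle u,\widetilde{Q}(\xi)u\rangle-\frac{\rho^2}{d+2}=\fint_{O_{\xi}}\langle y-\bar{\xi},u\rangle^2\,m(dy)-\fint_{B(\bar{\xi},\rho)}\langle y-\bar{\xi},u\rangle^2\,m(dy),
\]
I would estimate the right-hand side exactly as in the proof of Lemma~\ref{lem:qr}: split off the difference of normalizing constants and the difference of the integrals over the symmetric difference, bound the whole thing by
\[
\frac{2\,m\bigl(O_{\xi}\bigtriangleup B(\bar{\xi},\rho)\bigr)}{m(B(\bar{\xi},\rho))}\sup_{y\in O_{\xi}\cup B(\bar{\xi},\rho)}|y-\bar{\xi}|^2,
\]
and then use \eqref{eq:A}, Lemma~\ref{lem:edd} (which give $O_{\xi}\cup B(\bar{\xi},\rho)\subset B(\bar{\xi},(1+c_1)\rho)$, cf.\ \eqref{eq:a5}), and Lemma~\ref{lem:trngl}~(1) to conclude $\bigl|\langle u,\widetilde{Q}(\xi)u\rangle-\rho^2/(d+2)\bigr|\le C\eps\rho$ uniformly in $u\in S^{d-1}$.

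Finally I would invoke Lemma~\ref{lem:qr} to obtain $\bigl|q(\xi)-\rho^2/(d+2)\bigr|\le C\eps\rho$, so that
\[
\bigl\|\widetilde{Q}(\xi)-q(\xi)I_d\bigr\|\le\sup_{u\in S^{d-1}}\Bigl|\langle u,\widetilde{Q}(\xi)u\rangle-\frac{\rho^2}{d+2}\Bigr|+\Bigl|\frac{\rho^2}{d+2}-q(\xi)\Bigr|\le C\eps\rho,
\]
and dividing by $q(\xi)\ge C^{-1}\rho^2$ yields $\|\widetilde{Q}(\xi)/q(\xi)-I_d\|\le C\eps/\rho$, as desired. (Alternatively, Lemma~\ref{lem:qtq} could replace Lemma~\ref{lem:qr} in the last step by comparing $q(\xi)$ with $\widetilde q(\xi)=\frac1d\mathrm{Tr}[\widetilde Q(\xi)]$.) I do not expect a genuine obstacle here: the argument is a recombination of Lemmas~\ref{lemma:span}, \ref{lem:qr}, \ref{lem:trngl}, and \ref{lem:edd}; the only point needing a line of care is the isotropy identity $\fint_{B(0,\rho)}y\otimes y\,m(dy)=\frac{\rho^2}{d+2}I_d$ (equivalently the per-direction identity above), which is immediate from \eqref{eq:a0} together with the symmetry of the ball.
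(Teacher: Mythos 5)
Your proof is correct and follows essentially the same route as the paper: both pass through the intermediate matrix $\frac{\rho(\xi)^2}{d+2}I_d$, bound $\|\widetilde{Q}(\xi)-\frac{\rho(\xi)^2}{d+2}I_d\|$ by the symmetric-difference estimate of Lemma~\ref{lem:trngl}~(1) together with Lemma~\ref{lem:edd} and \eqref{eq:a0}, control $|q(\xi)-\frac{\rho(\xi)^2}{d+2}|$ via Lemma~\ref{lem:qr}, and divide by the lower bound $q(\xi)\ge C^{-1}\rho(\xi)^2$ from Lemma~\ref{lemma:span}. Your only cosmetic deviation is working with the quadratic form over $u\in S^{d-1}$ instead of directly with the matrix norm, which is equivalent since $\widetilde{Q}(\xi)$ is symmetric.
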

\begin{proof}
Let $\xi \in \cK \setminus  \partial \cK$ and let $\rho=\rho(\xi)$.  We have
\begin{equation}\label{eq:Q/q}
\widetilde{Q}(\xi) -q(\xi) I_d   =\left(\widetilde{Q}(\xi) -\frac{ \rho^2 }{d+2}I_d \right)-\left(q(\xi)I_d-\frac{\rho^{2}}{d+2} I_d  \right).
\end{equation}
Let $\eps=\eps(\xi)$.  Applying the third inequality of Lemma~\ref{lemma:span} and Lemma~\ref{lem:qr} to the above equation,  we see that there exists $C>0$ such that
\begin{equation}\label{eq:Q/q1}
 \left \|\frac{\widetilde{Q}(\xi)}{q(\xi)} -I_d\right\| \le \frac{C}{\rho^2}\left\| \widetilde{Q}(\xi) -\frac{\rho^2 }{d+2}I_d  \right \|+\frac{C\eps}{\rho}.
\end{equation}
We also see from \eqref{eq:a0} that\[\fint_{B(\bar{\xi},\rho)} (y-\bar{\xi}) \otimes (y-\bar{\xi}) \,m(dy) =\frac{\rho^{2}}{d+2} I_d. \]
From this identity and an argument similar to that in Lemma~\ref{lem:qr},   it follows that
\begin{align*}
\left \|\widetilde{Q}(\xi)- \frac{\rho^{2}}{d+2} I_d \right\| &\le \frac{2m(O_{\xi} \bigtriangleup B(\bar{\xi},\rho))}{m(B(\bar{\xi},\rho))}  \sup_{y \in O_{\xi}  \cup B(\bar{\xi},\rho)} \| (y-\bar{\xi})\otimes (y-\bar{\xi})\| \\
&\le   C\eps\rho
\end{align*}
for some $C>0$.  Combining this inequality and \eqref{eq:Q/q1},   we obtain the conclusion.
\end{proof}

\begin{lemma}\label{lem:Q/q2}
Assume  that $D$ is a $C^{1,\alpha}$-domain for some $\alpha  \in (0,1]$.   Assume in addition that  \eqref{eq:B} holds.   Then,  there exists $C_D>0$ such that for any $\xi \in \partial \cK$ with $\rho(\xi)<R$,
\begin{align*}
 \left \|\frac{\widetilde{Q}(\xi)}{q(\xi)} -I_d \right\| \le  C_D  \left( \frac{\eps(\xi)}{\rho(\xi)}+\rho(\xi)^\alpha \right),
\end{align*}
where  $R$ is the positive constant in Definition~\ref{defn:Dc1a}.
\end{lemma}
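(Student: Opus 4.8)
The plan is to follow the proof of Lemma~\ref{lem:Q/q} almost verbatim, with the full ball $B(\bar\xi,\rho)$ replaced by the half-ball at the boundary point $\hat\xi$. Fix $\xi \in \partial\cK$ with $\rho := \rho(\xi) < R$ and $\eps := \eps(\xi)$, and work in the coordinate system $CS_{\hat\xi}$ of Definition~\ref{defn:Dc1a}, in which $B_+(\hat\xi,\rho) := \{y = (y',y_d) \in B(0,\rho) \mid y_d > 0\}$ (origin identified with $\hat\xi$). I would begin from the decomposition
\[
\widetilde{Q}(\xi) - q(\xi)I_d = \Bigl(\widetilde{Q}(\xi) - \tfrac{\rho^2}{d+2}I_d\Bigr) - \Bigl(q(\xi) - \tfrac{\rho^2}{d+2}\Bigr)I_d,
\]
divide by $q(\xi)$, and use the lower bound $q(\xi) \ge C_D^{-1}\rho^2$ from Lemma~\ref{lem:span22} to reduce the problem to bounding $\rho^{-2}\bigl\|\widetilde{Q}(\xi) - \frac{\rho^2}{d+2}I_d\bigr\|$ and $\rho^{-2}\bigl|q(\xi) - \frac{\rho^2}{d+2}\bigr|$ each by $C_D(\eps/\rho + \rho^\alpha)$. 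The second of these is exactly Lemma~\ref{lem:qr2}, so the only real task is the matrix term.

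For that term, the key point is that the half-ball is still isotropic: a reflection argument in each coordinate hyperplane shows that
\[
\fint_{B_+(\hat\xi,\rho)}(y-\hat\xi)\otimes(y-\hat\xi)\,m(dy) = \frac{\rho^2}{d+2}I_d .
\]
Since $\check\xi = \hat\xi$ on $\partial\cK$, we have $\widetilde{Q}(\xi) = \fint_{O_\xi}(y-\hat\xi)\otimes(y-\hat\xi)\,m(dy)$, so $\widetilde{Q}(\xi) - \frac{\rho^2}{d+2}I_d$ is the difference of two normalized second-moment integrals, over $O_\xi$ and over $B_+(\hat\xi,\rho)$. As in the proofs of Lemmas~\ref{lem:Q/q} and \ref{lem:qr2}, I would then estimate
\[
\Bigl\|\widetilde{Q}(\xi) - \tfrac{\rho^2}{d+2}I_d\Bigr\| \le \frac{2\,m\bigl(O_\xi \bigtriangleup B_+(\hat\xi,\rho)\bigr)}{m\bigl(B_+(\hat\xi,\rho)\bigr)}\, \sup_{y \in O_\xi \cup B_+(\hat\xi,\rho)}\|(y-\hat\xi)\otimes(y-\hat\xi)\| ,
\]
and insert $m(B_+(\hat\xi,\rho)) = \omega_d\rho^d/2$, the bound $\sup|y-\hat\xi|^2 \le (1+c_2)^2\rho^2$ coming from Lemma~\ref{lem:edd} together with \eqref{eq:B}, and the symmetric-difference estimate $m(O_\xi \bigtriangleup B_+(\hat\xi,\rho)) \le C_D(\eps\rho^{d-1} + \rho^{d+\alpha})$, which is precisely \eqref{eq:qr3} already obtained in the proof of Lemma~\ref{lem:qr2} from Lemmas~\ref{lem:trngl}~(2) and \ref{lem:ub}. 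Multiplying these gives $\|\widetilde{Q}(\xi) - \frac{\rho^2}{d+2}I_d\| \le C_D(\eps\rho + \rho^{2+\alpha})$, hence $\rho^{-2}\bigl\|\widetilde{Q}(\xi)-\frac{\rho^2}{d+2}I_d\bigr\| \le C_D(\eps/\rho + \rho^\alpha)$, and combining with Lemma~\ref{lem:qr2} completes the argument.

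I do not expect a genuine obstacle here: the whole argument is a routine adaptation of Lemma~\ref{lem:Q/q} to the boundary, with every auxiliary estimate quoted directly from Lemmas~\ref{lem:span22}, \ref{lem:qr2}, \ref{lem:trngl}, \ref{lem:edd}, and \ref{lem:ub}. The one point requiring a moment's care — and thus the closest thing to a main obstacle — is identifying the correct comparison region: near a boundary cell, $O_\xi$ is modeled by $B_D(\hat\xi,\rho)$, which is in turn close to the half-space slice $B_+(\hat\xi,\rho)$, and one must verify that passing to this half-ball does not destroy the isotropy $\frac{\rho^2}{d+2}I_d$ of the second-moment matrix. It does not, by the coordinate-hyperplane symmetry noted above, after which everything goes through as in the interior case.
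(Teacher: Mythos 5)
Your proposal is correct and follows essentially the same route as the paper: the decomposition \eqref{eq:Q/q} combined with the lower bound $q(\xi)\ge C_D^{-1}\rho(\xi)^2$ and Lemma~\ref{lem:qr2} for the scalar term, the isotropy $\fint_{B_{+}(\hat{\xi},\rho)}(y-\hat{\xi})\otimes(y-\hat{\xi})\,m(dy)=\frac{\rho^2}{d+2}I_d$ (which is exactly Lemma~\ref{lem:uph2}, proved there by the same reflection argument you sketch), and the symmetric-difference bound \eqref{eq:qr3} for the matrix term. No gaps.
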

\begin{proof}
Let $\xi \in  \partial \cK$ with $\rho(\xi)<R$.   
Note that \eqref{eq:Q/q} holds even for $\xi$ in $\partial \cK$.
Thus,  using the third inequality of Lemma~\ref{lemma:span} and Lemma~\ref{lem:qr2},  we see that there exists $C_D>0$ such that
\[
 \left \|\frac{\widetilde{Q}(\xi)}{q(\xi)} -I_d \right\| \le \frac{C_D}{\rho^2}\left\| \widetilde{Q}(\xi) -\frac{\rho^2 }{d+2}I_d  \right \|+C_D\left(\frac{\eps}{\rho}+\rho^{\alpha}\right).
\]
From Lemma~\ref{lem:uph2} and a calculation similar to the one in Lemma~\ref{lem:qr2},  
\[
\left\|\widetilde{Q}(\xi)- \frac{\rho^{2}}{d+2}I_d \right\|\le \frac{C_Dm(O_{\xi} \bigtriangleup B_{+}(\hat{\xi},\rho))}{m(B_{+}(\hat{\xi},\rho))} \rho^2.
\]
The rest of the proof is similar to that of Lemma~\ref{lem:qr2}. 
\end{proof}

\section{Proofs of Theorems~\ref{thm:1} and \ref{thm:2}}

Define an operator  $U \colon \R^{\cK} \to \R^{\cK}$ by 
 \begin{align*}
Uf(\xi)&=\sum_{\eta \in \mathcal{N}_{\xi}}\left(f(\eta)- f(\xi) \right)\,\frac{m(\eta)}{m(O_{\xi})}. 
\end{align*}
Throughout this section,  we fix  $\xi \in \cK $  and $f \in C^2(\R^d)$.  
We first observe that 
\begin{align}\label{eq:U}
U(\pi f)(\xi)&= \sum_{\eta \in \mathcal{N}_{\xi}}(f(\bar{\eta}) - f(\bar{\xi})) \frac{ m(\eta)}{m(O_{\xi})} \\
&=\sum_{\eta \in \mathcal{N}_{\xi}}  (f(\bar{\eta}) -f(\check{\xi}))\frac{m(\eta)}{m(O_{\xi})} -(f(\bar{\xi})-f(\check{\xi})) \notag \\
&=:I_1-I_2. \notag 
\end{align}
For $a,b \in \R^d$ and $t \in [0,1]$,  define 
\[
u(t,a,b)=(1-t)\langle b-a, \nabla^2 f(a+t(b-a))(b-a)\rangle. \]
It then follows that
\begin{align} \label{eq:I134}
I_1
&= \sum_{\eta \in \mathcal{N}_{\xi}}  \langle \nabla f(\check{\xi}),\bar{\eta} -\check{\xi} \rangle   \frac{ m(\eta)}{m(O_{\xi})} + \sum_{\eta \in \mathcal{N}_{\xi}}\left( \int_{0}^1 u(t,\check{\xi},\bar{\eta})\,dt \right) \frac{m(\eta)  }{m(O_{\xi})}   \\
&=:I_3+I_4.\notag 
\end{align}

\begin{lemma}\label{lem:I321}
Assume \eqref{eq:A} and that $\xi$ belongs to $\cK \setminus  \partial \cK.$ Then,  
\begin{align*}
&I_3-\sum_{\eta \in \mathcal{N}_{\xi}} (\pi f(\eta)-\pi f(\xi))c(\xi,\eta)\frac{m(\eta)}{m(O_{\xi})}-I_2 \notag \\
&=-\sum_{\eta \in \mathcal{N}_{\xi}} \left(\int_{0}^1 u(t,\bar{\xi},\bar{\eta})\,dt \right) c(\xi,\eta)\frac{m(\eta)}{m(O_{\xi})}.
\end{align*}
\end{lemma}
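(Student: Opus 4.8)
The plan is to expand $\pi f(\eta)-\pi f(\xi)=f(\bar\eta)-f(\bar\xi)$ by Taylor's theorem and then exploit the defining property of the corrector $c(\xi)=A^{+}(\xi)b(\xi)$. Since $\xi\in\cK\setminus\partial\cK$ we have $\check\xi=\bar\xi$, so $I_2=f(\bar\xi)-f(\check\xi)=0$ and, with $u$ as defined above,
\[
f(\bar\eta)-f(\bar\xi)=\langle\nabla f(\bar\xi),\bar\eta-\bar\xi\rangle+\int_0^1 u(t,\bar\xi,\bar\eta)\,dt,\qquad\eta\in\mathcal N_\xi.
\]
Substituting this into $\sum_{\eta}(\pi f(\eta)-\pi f(\xi))c(\xi,\eta)\,m(\eta)/m(O_\xi)$ splits that sum into a ``linear'' part $\sum_{\eta}\langle\nabla f(\bar\xi),\bar\eta-\bar\xi\rangle c(\xi,\eta)\,m(\eta)/m(O_\xi)$ and a ``remainder'' part $\sum_{\eta}\bigl(\int_0^1 u(t,\bar\xi,\bar\eta)\,dt\bigr)c(\xi,\eta)\,m(\eta)/m(O_\xi)$. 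The remainder part is exactly (the negative of) the right-hand side of the claimed identity, so it remains only to show that $I_3$ cancels the linear part, i.e.
\[
I_3=\sum_{\eta\in\mathcal N_\xi}\langle\nabla f(\bar\xi),\bar\eta-\bar\xi\rangle\,c(\xi,\eta)\,\frac{m(\eta)}{m(O_\xi)}.
\]

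To prove this, first rewrite $I_3=\sum_{\eta}\langle\nabla f(\bar\xi),\bar\eta-\bar\xi\rangle\,m(\eta)/m(O_\xi)=\langle\nabla f(\bar\xi),b(\xi)\rangle$, using the definition \eqref{eq:b} of $b(\xi)$ on $\cK\setminus\partial\cK$. Next, reading off the $(i,j)$-entry $(\overline{\eta^{(j)}}_i-\check\xi_i)\,m(\eta^{(j)})/m(O_\xi)$ of $A(\xi)$ from \eqref{eq:A1} (with $\check\xi=\bar\xi$), one sees that the linear part equals $\langle\nabla f(\bar\xi),A(\xi)c(\xi)\rangle=\langle\nabla f(\bar\xi),A(\xi)A^{+}(\xi)b(\xi)\rangle$. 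Hence the whole lemma reduces to the identity $A(\xi)A^{+}(\xi)b(\xi)=b(\xi)$.

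For the last point I would invoke the full-rank information already established: writing $A(\xi)=\widetilde A(\xi)D(\xi)$ with $D(\xi)$ invertible and $\widetilde A(\xi){}^{\,t\!}\widetilde A(\xi)=Q(\xi)$, Lemma~\ref{lemma:span} (which uses \eqref{eq:A}) gives $\min_{u\in S^{d-1}}\langle u,Q(\xi)u\rangle>0$, so $Q(\xi)$ is positive definite and $A(\xi)$ has rank $d$, i.e.\ it is a $d\times N_\xi$ matrix of full row rank. For such a matrix $A(\xi)A^{+}(\xi)=I_d$ (equivalently, the column space of $A(\xi)$ is all of $\R^d$, so it certainly contains $b(\xi)$), and therefore $A(\xi)A^{+}(\xi)b(\xi)=b(\xi)$, completing the proof. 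The only point requiring any care is this rank argument and the resulting identity $A(\xi)A^{+}(\xi)=I_d$, which is where the hypothesis $\xi\in\cK\setminus\partial\cK$ together with \eqref{eq:A} enters; the rest is bookkeeping with Taylor's formula.
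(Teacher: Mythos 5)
Your proposal is correct and follows essentially the same route as the paper: Taylor expansion of $f(\bar\eta)-f(\bar\xi)$, identification of the linear part with $\langle\nabla f(\bar\xi),A(\xi)A^{+}(\xi)b(\xi)\rangle$ via \eqref{eq:c0}, and the full-rank argument (rank of $A(\xi)$ equals $d$ by Lemma~\ref{lemma:span}) giving $A(\xi)A^{+}(\xi)b(\xi)=b(\xi)$. The paper phrases the last step as the rank of $\widetilde{A}(\xi)$, hence of $A(\xi)$, being $d$, which is exactly your observation that $A(\xi)A^{+}(\xi)$ is the identity (the projection onto the full column space).
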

\begin{proof}
Since $\xi \in \cK \setminus  \partial \cK$,  we have $I_2=0$.
From Taylor's formula,
\begin{align}
&\sum_{\eta \in \mathcal{N}_{\xi}} (\pi f(\eta)-\pi f(\xi))c(\xi,\eta)\frac{m(\eta)}{m(O_{\xi})}=\sum_{\eta \in \mathcal{N}_{\xi}}(f(\bar{\eta})-f(\bar{\xi}))c(\xi,\eta)\frac{m(\eta)}{m(O_{\xi})}  \label{eq:add} \\
&=\sum_{\eta \in \mathcal{N}_{\xi}} \langle \nabla f(\bar{\xi}), \bar{\eta}-\bar{\xi} \rangle c(\xi,\eta)\frac{m(\eta)}{m(O_{\xi})}+\sum_{\eta \in \mathcal{N}_{\xi}} \left(\int_{0}^1 u(t,\bar{\xi},\bar{\eta})\,dt \right) c(\xi,\eta)\frac{m(\eta)}{m(O_{\xi})} .  \notag
\end{align}
From the definition of $c(\xi)$ (see \eqref{eq:c0}), it follows that
\begin{align*}
&\sum_{\eta \in \mathcal{N}_{\xi}} \langle \nabla f(\bar{\xi}), \bar{\eta}-\bar{\xi} \rangle c(\xi,\eta)\frac{m(\eta)}{m(O_{\xi})}   \\
&=\langle \nabla f(\bar{\xi}), A (\xi)c(\xi) \rangle =\langle \nabla f(\bar{\xi}), A (\xi)A^{+}(\xi)b(\xi) \rangle.  \notag
\end{align*}
Lemma~\ref{lemma:span} implies that the rank of $\widetilde{A}(\xi)$ is equal to $d$.  Thus, the rank of $A(\xi)$ is also equal to $d$, and it holds that $A(\xi)A^{+}(\xi)b(\xi)=b(\xi)$. Hence, we have
\begin{align}
&\sum_{\eta \in \mathcal{N}_{\xi}} \langle \nabla f(\bar{\xi}), \bar{\eta}-\bar{\xi} \rangle c(\xi,\eta)\frac{m(\eta)}{m(O_{\xi})}  =\langle \nabla f(\bar{\xi}), b(\xi) \rangle=I_3.  \label{eq:add2}
\end{align}
We use \eqref{eq:add} and \eqref{eq:add2} to obtain the conclusion.
\end{proof}
For $a,b \in \R^d$ and $t \in [0,1]$,  define
\[
v(t,a,b)=(1-t)\langle b-a, (\nabla^2 f(a+t(b-a))-\nabla^2 f(a))(b-a)\rangle.
\]
 Then,  
 \begin{align}\label{eq:tayl}
u(t,a,b)&=v(t,a,b)+(1-t)\langle b-a, \nabla^2 f(a)(b-a) \rangle  \\
&=v(t,a,b)+(1-t)\text{\rm Tr}\left[\{(b-a)\otimes (b-a) \}\nabla^2 f(a)\right].\notag
\end{align}
\begin{lemma}\label{lem:I322}
Assume \eqref{eq:B} and that $\xi$ belongs to $ \partial \cK.$  Then,  it follows that
\begin{align*}
 &I_3-\sum_{\eta \in \mathcal{N}_{\xi}}( \pi f(\eta)-\pi f(\xi))c(\xi,\eta)\frac{m(\eta)}{m(O_{\xi})}-I_2\\
 &=-\sum_{\eta \in \mathcal{N}_{\xi}} \left(\int_{0}^1 u(t,\hat{\xi},\bar{\eta})\,dt \right)c(\xi,\eta) \frac{m(\eta)}{m(O_\xi)}\notag \\
&\quad  -\Lambda_0(\xi)\int_{0}^1 v(t,\hat{\xi},\bar{\xi})\,dt+\langle \nabla f(\hat{\xi}),\Lambda_1(\xi) \rangle-\frac{1}{2}\text{\rm Tr}\left[\Lambda_2(\xi)\nabla^2 f(\hat{\xi}) \right],
 \end{align*}
where we define $\Lambda_0(\xi)$, $\Lambda_1(\xi)$,  and $\Lambda_2(\xi)$ by
\begin{align*}
\Lambda_0(\xi)&= \sum_{\eta \in \mathcal{N}_{\xi}} \left(1-c(\xi,\eta)\right)\frac{m(\eta)}{m(O_{\xi})},\\
\Lambda_1(\xi)&=\beta_d \rho(\xi) \nu(\hat{\xi})- \Lambda_0(\xi) (\bar{\xi}-\hat{\xi}),\\
\Lambda_2(\xi)&=\Lambda_0(\xi)(\bar{\xi}-\hat{\xi})\otimes (\bar{\xi}-\hat{\xi}).
\end{align*}
\end{lemma}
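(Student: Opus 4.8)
The plan is to follow the proof of Lemma~\ref{lem:I321}, but now performing every Taylor expansion around the boundary point $\hat{\xi}=\check{\xi}$ instead of around $\bar{\xi}$, while keeping track of the extra increment $f(\bar{\xi})-f(\hat{\xi})$ (which was absent in the interior case, where $\check{\xi}=\bar{\xi}$ and $I_2=0$) together with the correction $-\beta_d\rho(\xi)\nu(\hat{\xi})$ built into $b(\xi)$.

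First I would write, inside $\sum_{\eta\in\mathcal{N}_{\xi}}(\pi f(\eta)-\pi f(\xi))c(\xi,\eta)\,m(\eta)/m(O_{\xi})$, the increment as $\pi f(\eta)-\pi f(\xi)=(f(\bar{\eta})-f(\hat{\xi}))-(f(\bar{\xi})-f(\hat{\xi}))$, so that this sum splits into a piece indexed by $\bar{\eta}$ and the piece $(f(\bar{\xi})-f(\hat{\xi}))\sum_{\eta}c(\xi,\eta)m(\eta)/m(O_{\xi})$. Taylor's formula with base point $\hat{\xi}$, namely $f(\bar{\eta})-f(\hat{\xi})=\langle\nabla f(\hat{\xi}),\bar{\eta}-\hat{\xi}\rangle+\int_{0}^{1}u(t,\hat{\xi},\bar{\eta})\,dt$, converts the first piece into $\langle\nabla f(\hat{\xi}),A(\xi)c(\xi)\rangle$ plus the $c(\xi,\eta)$-weighted integral term. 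By Lemma~\ref{lem:span22} the matrix $\widetilde{A}(\xi)$, and hence $A(\xi)$, has rank $d$, so $A(\xi)A^{+}(\xi)=I_d$ and $A(\xi)c(\xi)=A(\xi)A^{+}(\xi)b(\xi)=b(\xi)$; since for $\xi\in\partial\cK$ we have $b(\xi)=\sum_{\eta}(\bar{\eta}-\hat{\xi})m(\eta)/m(O_{\xi})-\beta_d\rho(\xi)\nu(\hat{\xi})$, this linear term equals $I_3-\beta_d\rho(\xi)\langle\nabla f(\hat{\xi}),\nu(\hat{\xi})\rangle$. Using $\sum_{\eta}m(\eta)/m(O_{\xi})=1$ to write $\sum_{\eta}c(\xi,\eta)m(\eta)/m(O_{\xi})-1=-\Lambda_0(\xi)$ and subtracting $I_2=f(\bar{\xi})-f(\hat{\xi})$, I obtain that the left-hand side of the asserted identity equals
\[
\beta_d\rho(\xi)\langle\nabla f(\hat{\xi}),\nu(\hat{\xi})\rangle-\sum_{\eta\in\mathcal{N}_{\xi}}\Bigl(\int_{0}^{1}u(t,\hat{\xi},\bar{\eta})\,dt\Bigr)c(\xi,\eta)\frac{m(\eta)}{m(O_{\xi})}-\Lambda_0(\xi)\bigl(f(\bar{\xi})-f(\hat{\xi})\bigr).
\]

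It then remains to expand $f(\bar{\xi})-f(\hat{\xi})$. Applying Taylor's formula with base point $\hat{\xi}$ and inserting the decomposition \eqref{eq:tayl} with $(a,b)=(\hat{\xi},\bar{\xi})$, together with $\int_{0}^{1}(1-t)\,dt=1/2$, gives
\[
f(\bar{\xi})-f(\hat{\xi})=\langle\nabla f(\hat{\xi}),\bar{\xi}-\hat{\xi}\rangle+\int_{0}^{1}v(t,\hat{\xi},\bar{\xi})\,dt+\frac{1}{2}\text{\rm Tr}\bigl[\{(\bar{\xi}-\hat{\xi})\otimes(\bar{\xi}-\hat{\xi})\}\nabla^2 f(\hat{\xi})\bigr].
\]
Multiplying by $-\Lambda_0(\xi)$, recombining $\beta_d\rho(\xi)\langle\nabla f(\hat{\xi}),\nu(\hat{\xi})\rangle-\langle\nabla f(\hat{\xi}),\Lambda_0(\xi)(\bar{\xi}-\hat{\xi})\rangle=\langle\nabla f(\hat{\xi}),\Lambda_1(\xi)\rangle$, and noting that $\Lambda_0(\xi)(\bar{\xi}-\hat{\xi})\otimes(\bar{\xi}-\hat{\xi})=\Lambda_2(\xi)$ in the remaining trace term, yields exactly the claimed formula.

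The computation is essentially bookkeeping. The only delicate point is the interplay between the boundary base point $\hat{\xi}$ and the center of gravity $\bar{\xi}$: one must expand both the $\bar{\eta}$-increments and the $(\bar{\xi}-\hat{\xi})$-increment around $\hat{\xi}$, and separate the term $\beta_d\rho(\xi)\nu(\hat{\xi})$ of $b(\xi)$ cleanly enough that it later pairs with $-\Lambda_0(\xi)(\bar{\xi}-\hat{\xi})$ to form $\Lambda_1(\xi)$. The identity $\sum_{\eta}m(\eta)/m(O_{\xi})=1$ is what collapses the various $m(\eta)/m(O_{\xi})$-weighted remainders into the single scalar $\Lambda_0(\xi)$, and this is the step where a sign or a leftover term would most easily be mishandled.
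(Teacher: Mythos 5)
Your proof is correct and follows essentially the same route as the paper's: Taylor expansion of both the $\bar{\eta}$-increments and of $f(\bar{\xi})-f(\hat{\xi})$ around $\hat{\xi}$, the identity $A(\xi)c(\xi)=A(\xi)A^{+}(\xi)b(\xi)=b(\xi)$ guaranteed by the rank-$d$ statement from Lemma~\ref{lem:span22}, and $\sum_{\eta}m(\eta)/m(O_{\xi})=1$ to collapse the weights into $\Lambda_0(\xi)$. The only (immaterial) difference is that you cancel $I_3$ immediately by splitting $\langle\nabla f(\hat{\xi}),b(\xi)\rangle=I_3-\beta_d\rho(\xi)\langle\nabla f(\hat{\xi}),\nu(\hat{\xi})\rangle$, whereas the paper carries $I_3=\langle\nabla f(\hat{\xi}),b(\xi)+\beta_d\rho(\xi)\nu(\hat{\xi})\rangle$ to the end before cancelling.
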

\begin{proof}
Since $\xi \in \partial \cK$,  we have 
\begin{equation}\label{eq:w1}
I_3=\langle \nabla f(\hat{\xi}), b(\xi) + \beta_d \rho(\xi) \nu(\hat{\xi}) \rangle. 
\end{equation}
Using Taylor's formula,  we obtain that
\begin{align}\label{eq:w2}
&\sum_{\eta \in \mathcal{N}_{\xi}} (\pi f(\eta)-\pi f(\xi))c(\xi,\eta)\frac{m(\eta)}{m(O_{\xi})}  \\
&=\sum_{\eta \in \mathcal{N}_{\xi}}(f(\bar{\eta})-f(\hat{\xi}))c(\xi,\eta)\frac{m(\eta)}{m(O_{\xi})} -\sum_{\eta \in \mathcal{N}_{\xi}} (f(\bar{\xi})-f(\hat{\xi}))c(\xi,\eta)\frac{m(\eta)}{m(O_{\xi})}  \notag \\
&=\sum_{\eta \in \mathcal{N}_{\xi}} \langle \nabla f(\hat{\xi}),\bar{\eta}-\hat{\xi}\rangle c(\xi,\eta)\frac{m(\eta)}{m(O_{\xi})}+\sum_{\eta \in \mathcal{N}_{\xi}} \left(\int_{0}^1 u(t,\hat{\xi},\bar{\eta})\,dt \right) c(\xi,\eta) \frac{m(\eta)}{m(O_\xi)}  \notag \\
&\quad -\sum_{\eta \in \mathcal{N}_{\xi}} (f(\bar{\xi})-f(\hat{\xi}))c(\xi,\eta)\frac{m(\eta)}{m(O_{\xi})} .\notag
\end{align}
From Taylor's formula and \eqref{eq:tayl}, 
\begin{align}
I_2
=f(\bar{\xi})-f(\check{\xi})&=\langle \nabla f(\hat{\xi}), \bar{\xi}-\hat{\xi} \rangle+\int_{0}^1v(t,\hat{\eta},\bar{\xi})\,dt \label{eq:w3} \\
&\quad+\frac{1}{2} \text{\rm Tr}\left[\{(\bar{\xi}-\hat{\xi})\otimes (\bar{\xi}-\hat{\xi})\}\nabla^2 f(\hat{\xi}) \right]. \notag
\end{align}
Combining \eqref{eq:w1},  \eqref{eq:w2},  and \eqref{eq:w3},  we obtain that
\begin{align}
&I_3-\sum_{\eta \in \mathcal{N}_{\xi}} ( \pi f(\eta)-\pi f(\xi))c(\xi,\eta)\frac{m(\eta)}{m(O_{\xi})}-I_2  \label{eq:I3221c}
 \\
 &=\langle \nabla f(\hat{\xi}), b(\xi) +\beta_d \rho(\xi)\nu(\hat{\xi}) \rangle-\sum_{\eta \in \mathcal{N}_{\xi}} \langle \nabla f(\hat{\xi}),\bar{\eta}-\hat{\xi}\rangle c(\xi,\eta)\frac{m(\eta)}{m(O_{\xi})} \notag  \\
&\quad-\sum_{\eta \in \mathcal{N}_{\xi}} \left(\int_{0}^1 u(t,\hat{\xi},\bar{\eta})\,dt \right) c(\xi,\eta) \frac{m(\eta)}{m(O_\xi)}  -\Lambda_0(\xi)\langle \nabla f(\hat{\xi}), \bar{\xi}-\hat{\xi} \rangle \notag \\
&\quad -\Lambda_0(\xi) \int_{0}^1 v(t,\hat{\xi},\bar{\xi})\,dt-  \frac{1}{2} \text{\rm Tr}\left[\Lambda_2(\xi)\nabla^2 f(\hat{\xi}) \right]. \notag
\end{align}
Using \eqref{eq:c0},   we obtain that
\begin{align}
&\langle \nabla f(\hat{\xi}), b(\xi) +\beta_d \rho(\xi)\nu(\hat{\xi}) \rangle-\sum_{\eta \in \mathcal{N}_{\xi}} \langle \nabla f(\hat{\xi}),\bar{\eta}-\hat{\xi}\rangle c(\xi,\eta)\frac{m(\eta)}{m(O_{\xi})}  
\label{eq:I3221d}  \\
&\quad -  \Lambda_0(\xi)\langle \nabla f(\hat{\xi}), \bar{\xi}-\hat{\xi} \rangle  \notag  \\
&=\langle \nabla f(\hat{\xi}), b(\xi) \rangle
-\sum_{\eta \in \mathcal{N}_{\xi}} \langle \nabla f(\hat{\xi}),\bar{\eta}-\hat{\xi}\rangle c(\xi,\eta)\frac{m(\eta)}{m(O_{\xi})}  \notag  \\
&\quad +\langle \nabla f(\hat{\xi}), \beta_d \rho(\xi)\nu(\hat{\xi})- \Lambda_0(\xi) (\bar{\xi}-\hat{\xi}) \rangle \notag \\
&=\langle \nabla f(\hat{\xi}), b(\xi) \rangle
-\sum_{\eta \in \mathcal{N}_{\xi}} \langle \nabla f(\hat{\xi}),\bar{\eta}-\hat{\xi}\rangle c(\xi,\eta)\frac{m(\eta)}{m(O_{\xi})}  \notag  \\
&\quad +\langle \nabla f(\hat{\xi}), \beta_d \rho(\xi)\nu(\hat{\xi})- \Lambda_0(\xi) (\bar{\xi}-\hat{\xi}) \rangle \notag\\
&=\langle \nabla f(\hat{\xi}), b(\xi) \rangle
-\sum_{\eta \in \mathcal{N}_{\xi}} \langle \nabla f(\hat{\xi}),\bar{\eta}-\hat{\xi}\rangle c(\xi,\eta)\frac{m(\eta)}{m(O_{\xi})}   \notag \\
&\quad+\langle \nabla f(\hat{\xi}),  \Lambda_1(\xi) \rangle.\notag 
\end{align}
From Lemma~\ref{lem:span22} and an argument similar to that in Lemma~\ref{lem:I321},   it follows that
\begin{align}
&\langle \nabla f(\hat{\xi}), b(\xi) \rangle
-\sum_{\eta \in \mathcal{N}_{\xi}} \langle \nabla f(\hat{\xi}),\bar{\eta}-\hat{\xi}\rangle c(\xi,\eta)\frac{m(\eta)}{m(O_{\xi})} \label{eq:I3221dd} \\
&=\langle \nabla f(\hat{\xi}), b(\xi) \rangle
-\langle \nabla f(\hat{\xi}),A(\xi)A^{+}(\xi)b(\xi)\rangle =0. \notag
\end{align}
Combining  \eqref{eq:I3221c}, \eqref{eq:I3221d},  and \eqref{eq:I3221dd}, we obtain the claim. 
\end{proof}
\begin{lemma}\label{lem:I4}
We have 
\begin{align*}
I_4&= \sum_{\eta \in \mathcal{N}_{\xi}}\left( \int_{0}^1 v(t,\check{\xi},\bar{\eta}) \,dt \right) \frac{m(\eta)}{m(O_{\xi})} -\frac{1}{2} \sum_{\eta \in \mathcal{N}_{\xi}} \left( \fint_{\eta } u(0,\bar{\eta},y)\,m(dy) \right)\frac{m(\eta)}{m(O_{\xi})} \\ 
 &\quad+\frac{1}{2} \sum_{\eta \in \mathcal{N}_{\xi}} \left( \fint_{\eta } \langle y-\check{\xi}, (\nabla^2 f(\check{\xi})-\nabla^2 f(\bar{\eta}))(\bar{\eta}-y) \rangle\,m(dy) \right)\frac{m(\eta)}{m(O_{\xi})} \notag  \\
&\quad +\frac{1}{2}\text{\rm Tr}\left[ \widetilde{Q}(\xi)\nabla^2 f(\check{\xi})\right] .
\end{align*}
\end{lemma}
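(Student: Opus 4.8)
The plan is to obtain the identity by a straightforward rearrangement of the Taylor remainder $I_4$ from \eqref{eq:I134}, using \eqref{eq:tayl} together with two elementary cancellations coming from the fact that each $\bar\eta$ is the center of gravity of $\eta$. There is no deep point here; the work is entirely in the bookkeeping.

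First I would apply \eqref{eq:tayl} with $a=\check{\xi}$ and $b=\bar{\eta}$ and integrate in $t$ over $[0,1]$. Since $\int_0^1(1-t)\,dt=1/2$, this gives
\[
\int_0^1 u(t,\check{\xi},\bar{\eta})\,dt=\int_0^1 v(t,\check{\xi},\bar{\eta})\,dt+\frac12\,\mathrm{Tr}\bigl[\{(\bar{\eta}-\check{\xi})\otimes(\bar{\eta}-\check{\xi})\}\nabla^2 f(\check{\xi})\bigr].
\]
Multiplying by $m(\eta)/m(O_\xi)$, summing over $\eta\in\mathcal{N}_\xi$, and recalling the definition \eqref{eq:dQ1} of $Q(\xi)$, I arrive at
\[
I_4=\sum_{\eta\in\mathcal{N}_\xi}\Bigl(\int_0^1 v(t,\check{\xi},\bar{\eta})\,dt\Bigr)\frac{m(\eta)}{m(O_\xi)}+\frac12\,\mathrm{Tr}\bigl[Q(\xi)\nabla^2 f(\check{\xi})\bigr].
\]

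It then remains to convert $\tfrac12\mathrm{Tr}[Q(\xi)\nabla^2 f(\check{\xi})]$ into the last three terms of the statement. Expanding $(y-\check{\xi})\otimes(y-\check{\xi})=\{(y-\bar{\eta})+(\bar{\eta}-\check{\xi})\}^{\otimes 2}$ and using $\fint_\eta(y-\bar{\eta})\,m(dy)=0$ to annihilate the cross terms, I get, for each $\eta$,
\[
(\bar{\eta}-\check{\xi})\otimes(\bar{\eta}-\check{\xi})=\fint_\eta(y-\check{\xi})\otimes(y-\check{\xi})\,m(dy)-\fint_\eta(y-\bar{\eta})\otimes(y-\bar{\eta})\,m(dy),
\]
hence $Q(\xi)=\widetilde{Q}(\xi)-\sum_{\eta\in\mathcal{N}_\xi}\bigl(\fint_\eta(y-\bar{\eta})\otimes(y-\bar{\eta})\,m(dy)\bigr)m(\eta)/m(O_\xi)$. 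Tracing against $\nabla^2 f(\check{\xi})$ produces the term $\tfrac12\mathrm{Tr}[\widetilde{Q}(\xi)\nabla^2 f(\check{\xi})]$ plus $-\tfrac12\sum_\eta\bigl(\fint_\eta\langle y-\bar{\eta},\nabla^2 f(\check{\xi})(y-\bar{\eta})\rangle\,m(dy)\bigr)m(\eta)/m(O_\xi)$. In the integrand of the latter I would write $\nabla^2 f(\check{\xi})=\nabla^2 f(\bar{\eta})+M_\eta$ with $M_\eta:=\nabla^2 f(\check{\xi})-\nabla^2 f(\bar{\eta})$; the $\nabla^2 f(\bar{\eta})$-piece is exactly $u(0,\bar{\eta},y)$, yielding the second term of the statement, while for the $M_\eta$-piece I would use the pointwise identity $\langle y-\bar{\eta},M_\eta(y-\bar{\eta})\rangle=-\langle y-\check{\xi},M_\eta(\bar{\eta}-y)\rangle+\langle\bar{\eta}-\check{\xi},M_\eta(\bar{\eta}-y)\rangle$ and again $\fint_\eta(\bar{\eta}-y)\,m(dy)=0$ to kill the last summand, which gives
\[
\fint_\eta\langle y-\bar{\eta},M_\eta(y-\bar{\eta})\rangle\,m(dy)=-\fint_\eta\langle y-\check{\xi},M_\eta(\bar{\eta}-y)\rangle\,m(dy),
\]
i.e.\ the third term of the statement. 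Assembling these pieces yields the claimed four-term expression for $I_4$.

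The step most prone to slips is this last rewriting: one must apply the cancellation $\fint_\eta(\bar{\eta}-y)\,m(dy)=0$ inside each fixed cell $\eta$ (with $\bar{\eta}$ and $\check{\xi}$ frozen) before summing over $\mathcal{N}_\xi$, and keep careful track of the sign flip when trading $\langle y-\bar{\eta},M_\eta(y-\bar{\eta})\rangle$ for $\langle y-\check{\xi},M_\eta(\bar{\eta}-y)\rangle$. Apart from that the argument is mechanical, relying only on the defining property of the centers of gravity and on \eqref{eq:tayl}; note in particular that the cancellations are purely algebraic, so neither \eqref{eq:A} nor \eqref{eq:B} is needed for this lemma.
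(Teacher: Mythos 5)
Your proof is correct and follows essentially the same route as the paper: both rest on the Taylor split \eqref{eq:tayl} of $u$ into $v$ plus the quadratic form, and on the center-of-gravity cancellations $\fint_\eta(y-\bar{\eta})\,m(dy)=0$ within each cell. The only cosmetic difference is the order of operations — you first collect $\tfrac12\mathrm{Tr}[Q(\xi)\nabla^2 f(\check{\xi})]$ and then trade $Q(\xi)$ for $\widetilde{Q}(\xi)$ via the tensor identity, whereas the paper splits $I_4=I_5+I_6+I_7$ with $\widetilde{Q}(\xi)$ appearing directly in $I_7$ and performs the equivalent cancellations on $u(0,\check{\xi},\bar{\eta})-u(0,\check{\xi},y)$ — and your observation that neither \eqref{eq:A} nor \eqref{eq:B} is needed is consistent with the paper stating the lemma without those hypotheses.
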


\begin{proof}
It follows that  
\begin{align}
I_4&= \sum_{\eta \in \mathcal{N}_{\xi}}\left[ \int_{0}^1\{ u(t,\check{\xi},\bar{\eta}) -(1-t)u(0,\check{\xi},\bar{\eta}) \}\,dt \right] \frac{m(\eta)}{m(O_{\xi})}  \notag  \\
&\quad + \sum_{\eta \in \mathcal{N}_{\xi}}\left( \int_{0}^1 (1-t)u(0,\check{\xi},\bar{\eta})\,dt -\frac{1}{2} \fint_{\eta}\langle y-\check{\xi},\nabla^2f(\check{\xi})(y-\check{\xi})\rangle \,m(dy)  \right)\frac{m(\eta) }{m(O_{\xi})} \notag  \\
&\quad +\frac{1}{2} \sum_{\eta \in \mathcal{N}_{\xi}} \left( \fint_{\eta}\langle y-\check{\xi},\nabla^2f(\check{\xi})(y-\check{\xi})\rangle \,m(dy)\right) \frac{m(\eta) }{m(O_{\xi})} \notag \\
&=:I_5+I_6+I_7.  \notag
\end{align}
A direct computation shows  that for every $\eta \in \mathcal{N}_{\xi}$,
\begin{align*}
&\int_{0}^1\{ u(t,\check{\xi},\bar{\eta}) -(1-t)u(0,\check{\xi},\bar{\eta}) \}\,dt  \\
&=\int_{0}^1 (1-t)\{ \langle \bar{\eta}-\check{\xi}, \nabla^2 f(\check{\xi}+t(\bar{\eta}-\check{\xi}))(\bar{\eta}-\check{\xi}) \rangle -\langle \bar{\eta}-\check{\xi}, \nabla^2 f(\check{\xi})(\bar{\eta}-\check{\xi})\rangle \}\,dt \\
&=\int_{0}^1 v(t,\check{\xi},\bar{\eta}) \,dt.
\end{align*}
Therefore, 
\begin{equation}\label{eq:I5}
I_5=\sum_{\eta \in \mathcal{N}_{\xi}}\left( \int_{0}^1 v(t,\check{\xi},\bar{\eta}) \,dt \right) \frac{m(\eta)}{m(O_{\xi})}.
\end{equation}
We obtain that 
\begin{align}
I_6=\frac{1}{2}\sum_{\eta \in \mathcal{N}_{\xi}}\left[  \fint_{\eta} \{u(0,\check{\xi},\bar{\eta})-u(0,\check{\xi},y)\}\,m(dy)  \right] \frac{m(\eta) }{m(O_{\xi})} . \label{eq:i60}
\end{align}
For $\eta \in \mathcal{N}_{\xi}$ and $y \in \eta$,
\begin{align}\label{eq:i61}
&u(0,\check{\xi},\bar{\eta})-u(0,\check{\xi},y)  \\
&=u(0,\check{\xi},\bar{\eta})-u(0,\bar{\eta},y)+u(0,\bar{\eta},y)-u(0,\check{\xi},y).  \notag
\end{align}
The sum of all terms except the second term in the above equation is
\begin{align}\label{eq:i62}
&u(0,\check{\xi},\bar{\eta})+u(0,\bar{\eta},y)-u(0,\check{\xi},y)   \\
&=\langle \bar{\eta}-\check{\xi}, \nabla^2 f(\check{\xi})(\bar{\eta}-\check{\xi})\rangle  \notag \\
&\quad +\langle y-\bar{\eta}, \nabla^2 f(\bar{\eta})(y-\bar{\eta})\rangle-\langle y-\check{\xi},\nabla^2f(\check{\xi})(y-\check{\xi})\rangle. \notag
\end{align}
The difference  of the first and third terms of the above equation  is equal to 
\begin{align}
&\langle \bar{\eta}-\check{\xi}, \nabla^2 f(\check{\xi})(\bar{\eta}-\check{\xi}) \rangle-\langle y-\check{\xi}, \nabla^2 f(\check{\xi})(y-\check{\xi}) \rangle 
\label{eq:i63}
\\
&=\langle \bar{\eta}-\check{\xi}, \nabla^2 f(\check{\xi})(\bar{\eta}-\check{\xi}) \rangle-\langle y-\check{\xi}, \nabla^2 f(\check{\xi})(\bar{\eta}-\check{\xi}) \rangle \notag\\
&\quad +\langle y-\check{\xi}, \nabla^2 f(\check{\xi})(\bar{\eta}-\check{\xi}) \rangle-\langle y-\check{\xi}, \nabla^2 f(\check{\xi})(y-\check{\xi}) \rangle  \notag \\
&=\langle \bar{\eta}-y, \nabla^2 f(\check{\xi})(\bar{\eta}-\check{\xi}) \rangle+\langle y-\check{\xi}, \nabla^2 f(\check{\xi})(\bar{\eta}-y) \rangle. \notag 
\end{align}
From \eqref{eq:i62} and \eqref{eq:i63},  we have
\begin{align}\label{eq:i64}
&u(0,\check{\xi},\bar{\eta})+u(0,\bar{\eta},y)-u(0,\check{\xi},y)  \\
&=\langle \bar{\eta}-y, \nabla^2 f(\check{\xi})(\bar{\eta}-\check{\xi}) \rangle+\langle y-\check{\xi}, \nabla^2 f(\check{\xi})(\bar{\eta}-y) \rangle \notag \\
&\quad+\langle y-\bar{\eta}, \nabla^2 f(\bar{\eta})(y-\bar{\eta}) \rangle  \notag \\
&=\langle \bar{\eta}-y, \nabla^2 f(\check{\xi})(\bar{\eta}-\check{\xi}) \rangle+\langle y-\check{\xi}, \nabla^2 f(\check{\xi})(\bar{\eta}-y) \rangle  \notag \\
&\quad+\langle y-\check{\xi}, \nabla^2 f(\bar{\eta})(y-\bar{\eta}) \rangle+\langle \check{\xi}-\bar{\eta}, \nabla^2 f(\bar{\eta})(y-\bar{\eta}) \rangle \notag  \\
&=\langle \bar{\eta}-y, \nabla^2 f(\check{\xi})(\bar{\eta}-\check{\xi}) \rangle+\langle y-\check{\xi}, (\nabla^2 f(\check{\xi})-\nabla^2 f(\bar{\eta}))(\bar{\eta}-y) \rangle \notag  \\
&\quad +\langle \check{\xi}-\bar{\eta}, \nabla^2 f(\bar{\eta})(y-\bar{\eta}) \rangle  .\notag
\end{align}
From \eqref{eq:i60}, \eqref{eq:i61}, \eqref{eq:i64}, and the fact that $\bar{\eta}$ is the center of gravity of $\eta$,  it follows that
\begin{align*}
I_6
 &=-\frac{1}{2} \sum_{\eta \in \mathcal{N}_{\xi}} \left( \fint_{\eta } u(0,\bar{\eta},y)\,m(dy) \right)\frac{m(\eta)}{m(O_{\xi})} \\ 
 &\quad+\frac{1}{2} \sum_{\eta \in \mathcal{N}_{\xi}} \left( \fint_{\eta } \langle y-\check{\xi}, (\nabla^2 f(\check{\xi})-\nabla^2 f(\bar{\eta}))(\bar{\eta}-y) \rangle\,m(dy) \right)\frac{m(\eta)}{m(O_{\xi})}.
\end{align*}
It is clear  that 
\begin{align*}
I_7&=\frac{1}{2}  \fint_{O_\xi}\langle y-\check{\xi},\nabla^2f(\check{\xi})(y-\check{\xi})\rangle \,m(dy)  \notag \\
&=\frac{1}{2}\text{\rm Tr}\left[ \left(\fint_{O_{\xi}}(y-\check{\xi})\otimes (y-\check{\xi})\,m(dy) \right)\nabla^2 f(\check{\xi})\right]=\frac{1}{2}\text{\rm Tr}\left[ \widetilde{Q}(\xi)\nabla^2 f(\check{\xi})\right].
\end{align*}
Hence,  we obtain the desired equation.  
\end{proof}

Combining \eqref{eq:U},  \eqref{eq:I134},  and Lemmas~\ref{lem:I321} and \ref{lem:I4},  we obtain the following lemma.

\begin{lemma}\label{lem:INT}
Assume \eqref{eq:A} and that $\xi$ belongs to $\cK \setminus  \partial \cK.$  Then,  we have 
\begin{align}\label{eq:INT}
&U(\pi f)(\xi)-\sum_{\eta \in \mathcal{N}_{\xi}} (\pi f(\eta)-\pi f(\xi))c(\xi,\eta)\frac{m(\eta)}{m(O_{\xi})}   \\
&=-\sum_{\eta \in \mathcal{N}_{\xi}} \left(\int_{0}^1 u(t,\bar{\xi},\bar{\eta})\,dt \right) c(\xi,\eta)\frac{m(\eta)}{m(O_{\xi})} \notag \\
&\quad +\sum_{\eta \in \mathcal{N}_{\xi}}\left( \int_{0}^1 v(t,\check{\xi},\bar{\eta}) \,dt \right) \frac{m(\eta)}{m(O_{\xi})}\notag \\ 
&\quad -\frac{1}{2} \sum_{\eta \in \mathcal{N}_{\xi}} \left( \fint_{\eta } u(0,\bar{\eta},y)\,m(dy) \right)\frac{m(\eta)}{m(O_{\xi})} \notag \\ 
 &\quad+\frac{1}{2} \sum_{\eta \in \mathcal{N}_{\xi}} \left( \fint_{\eta } \langle y-\check{\xi}, (\nabla^2 f(\check{\xi})-\nabla^2 f(\bar{\eta}))(\bar{\eta}-y) \rangle\,m(dy) \right)\frac{m(\eta)}{m(O_{\xi})} \notag  \\
&\quad +\frac{1}{2}\text{\rm Tr}\left[ \widetilde{Q}(\xi)\nabla^2 f(\check{\xi})\right] . \notag
\end{align}
\end{lemma}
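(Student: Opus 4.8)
The plan is to obtain \eqref{eq:INT} by assembling the decompositions already established in this section, so that no new estimate is required. Starting from \eqref{eq:U} and \eqref{eq:I134}, one has $U(\pi f)(\xi) = I_1 - I_2 = (I_3 + I_4) - I_2$. Subtracting the correction sum $\sum_{\eta \in \mathcal{N}_{\xi}}(\pi f(\eta) - \pi f(\xi))c(\xi,\eta)\,m(\eta)/m(O_{\xi})$ from both sides and regrouping, the left-hand side of \eqref{eq:INT} becomes
\[
\left(I_3 - \sum_{\eta \in \mathcal{N}_{\xi}}(\pi f(\eta) - \pi f(\xi))c(\xi,\eta)\frac{m(\eta)}{m(O_{\xi})} - I_2\right) + I_4.
\]

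Since $\xi \in \cK \setminus \partial \cK$, Lemma~\ref{lem:I321} rewrites the first group as $-\sum_{\eta \in \mathcal{N}_{\xi}}\bigl(\int_0^1 u(t,\bar{\xi},\bar{\eta})\,dt\bigr)c(\xi,\eta)\,m(\eta)/m(O_{\xi})$, which is exactly the first summand on the right-hand side of \eqref{eq:INT}. Next, Lemma~\ref{lem:I4} expresses $I_4$ as the sum of the four remaining terms on the right-hand side of \eqref{eq:INT}; here one uses $\check{\xi} = \bar{\xi}$, which holds because $\xi$ is an interior cell. Adding the two contributions yields the claimed identity.

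The argument is entirely bookkeeping, so I do not anticipate a genuine obstacle. The two points that must be handled with care are the sign in front of the $u(t,\bar{\xi},\bar{\eta})$-term produced by Lemma~\ref{lem:I321}, and consistent tracking of the base points $\check{\xi}$ and $\bar{\xi}$ (which coincide for interior cells, but the general notation is kept so that the companion formula for $\xi \in \partial \cK$ obtained from Lemma~\ref{lem:I322} takes a parallel form). All of the analytic content---the second-order Taylor expansions, the cancellation $A(\xi)A^{+}(\xi)b(\xi) = b(\xi)$ coming from Lemma~\ref{lemma:span}, and the identification of the term $I_7$ with $\tfrac12\operatorname{Tr}[\widetilde{Q}(\xi)\nabla^2 f(\check{\xi})]$---has already been carried out in Lemmas~\ref{lem:I321} and \ref{lem:I4}, so the proof reduces to substitution.
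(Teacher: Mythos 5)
Your proposal is correct and matches the paper's own (one-line) proof: the paper likewise obtains \eqref{eq:INT} by combining \eqref{eq:U} and \eqref{eq:I134} with Lemmas~\ref{lem:I321} and \ref{lem:I4}, i.e.\ writing the left-hand side as $\bigl(I_3-\sum_{\eta}(\pi f(\eta)-\pi f(\xi))c(\xi,\eta)\,m(\eta)/m(O_\xi)-I_2\bigr)+I_4$ and substituting. The bookkeeping points you flag (the sign from Lemma~\ref{lem:I321} and $\check{\xi}=\bar{\xi}$ for interior cells) are handled exactly as you describe.
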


By using  \eqref{eq:U},  \eqref{eq:I134}, and Lemmas~\ref{lem:I322} and \ref{lem:I4},   we also get the following.
\begin{lemma}\label{lem:BD}
Assume \eqref{eq:B} and that $\xi$ belongs to $ \partial \cK.$  Then,  we have 
\begin{align}\label{eq:BD}
&U(\pi f)(\xi)-\sum_{\eta \in \mathcal{N}_{\xi}} (\pi f(\eta)-\pi f(\xi))c(\xi,\eta)\frac{m(\eta)}{m(O_{\xi})}  \\
&=-\sum_{\eta \in \mathcal{N}_{\xi}} \left(\int_{0}^1 u(t,\hat{\xi},\bar{\eta})\,dt \right)c(\xi,\eta) \frac{m(\eta)}{m(O_\xi)}\notag \\
&\quad  -\Lambda_0(\xi)\int_{0}^1 v(t,\hat{\xi},\bar{\xi})\,dt+\langle \nabla f(\hat{\xi}),\Lambda_1(\xi) \rangle-\frac{1}{2}\text{\rm Tr}\left[\Lambda_2(\xi)\nabla^2 f(\hat{\xi}) \right]  \notag \\
&\quad + \sum_{\eta \in \mathcal{N}_{\xi}}\left( \int_{0}^1 v(t,\hat{\xi},\bar{\eta}) \,dt \right) \frac{m(\eta)}{m(O_{\xi})} \notag \\
&\quad-\frac{1}{2} \sum_{\eta \in \mathcal{N}_{\xi}} \left( \fint_{\eta } u(0,\bar{\eta},y)\,m(dy) \right)\frac{m(\eta)}{m(O_{\xi})} \notag  \\ 
 &\quad+\frac{1}{2} \sum_{\eta \in \mathcal{N}_{\xi}} \left( \fint_{\eta } \langle y-\hat{\xi}, (\nabla^2 f(\hat{\xi})-\nabla^2 f(\bar{\eta}))(\bar{\eta}-y) \rangle\,m(dy) \right)\frac{m(\eta)}{m(O_{\xi})} \notag  \\
&\quad +\frac{1}{2}\text{\rm Tr}\left[ \widetilde{Q}(\xi)\nabla^2 f(\hat{\xi})\right] . \notag
\end{align}
\end{lemma}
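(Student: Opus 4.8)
The plan is to reproduce, for boundary cells, the same bookkeeping already carried out for interior cells in Lemma~\ref{lem:INT}, replacing the interior first-order identity of Lemma~\ref{lem:I321} by its boundary counterpart, Lemma~\ref{lem:I322}. No new estimates are required: the analytic content has been packaged into Lemmas~\ref{lem:span22}, \ref{lem:I322}, and \ref{lem:I4}, so what remains is purely the assembly of their conclusions. The argument is the boundary analogue of the sentence ``Combining \eqref{eq:U}, \eqref{eq:I134}, and Lemmas~\ref{lem:I321} and \ref{lem:I4}'' that precedes Lemma~\ref{lem:INT}.

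Concretely, I would first use \eqref{eq:U} and \eqref{eq:I134} — i.e. $U(\pi f)(\xi) = I_1 - I_2 = I_3 + I_4 - I_2$ — to write, for $\xi \in \partial \cK$,
\[
U(\pi f)(\xi) - \sum_{\eta \in \mathcal{N}_{\xi}} (\pi f(\eta) - \pi f(\xi)) c(\xi,\eta) \frac{m(\eta)}{m(O_{\xi})}
= \Bigl( I_3 - \sum_{\eta \in \mathcal{N}_{\xi}} (\pi f(\eta) - \pi f(\xi)) c(\xi,\eta) \frac{m(\eta)}{m(O_{\xi})} - I_2 \Bigr) + I_4.
\]
Since \eqref{eq:B} is assumed and $\xi \in \partial \cK$, Lemma~\ref{lem:I322} evaluates the parenthesized expression as $-\sum_{\eta} \bigl(\int_0^1 u(t,\hat{\xi},\bar{\eta})\,dt\bigr) c(\xi,\eta) \frac{m(\eta)}{m(O_\xi)} - \Lambda_0(\xi) \int_0^1 v(t,\hat{\xi},\bar{\xi})\,dt + \langle \nabla f(\hat{\xi}), \Lambda_1(\xi)\rangle - \frac12 \mathrm{Tr}[\Lambda_2(\xi) \nabla^2 f(\hat{\xi})]$, which is precisely the first block of terms on the right-hand side of \eqref{eq:BD}. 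For the remaining term $I_4$, I would apply Lemma~\ref{lem:I4}: it holds for every cell, and for $\xi \in \partial \cK$ one has $\check{\xi} = \hat{\xi}$, so its conclusion supplies exactly $\sum_\eta \bigl(\int_0^1 v(t,\hat{\xi},\bar{\eta})\,dt\bigr) \frac{m(\eta)}{m(O_\xi)} - \frac12 \sum_\eta \bigl(\fint_\eta u(0,\bar{\eta},y)\,m(dy)\bigr) \frac{m(\eta)}{m(O_\xi)} + \frac12 \sum_\eta \bigl(\fint_\eta \langle y - \hat{\xi}, (\nabla^2 f(\hat{\xi}) - \nabla^2 f(\bar{\eta}))(\bar{\eta} - y)\rangle\,m(dy)\bigr) \frac{m(\eta)}{m(O_\xi)} + \frac12 \mathrm{Tr}[\widetilde{Q}(\xi) \nabla^2 f(\hat{\xi})]$, the second block of terms in \eqref{eq:BD}. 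Adding the two blocks yields the claimed identity.

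The only points demanding attention — and the closest thing to an obstacle in this otherwise mechanical step — are checking that the hypotheses of the cited results match those in force here (Lemma~\ref{lem:I322} needs exactly \eqref{eq:B} together with $\xi \in \partial \cK$, while Lemma~\ref{lem:I4} is unconditional), and observing that every occurrence of $\check{\xi}$ in the statement of Lemma~\ref{lem:I4} is to be read as $\hat{\xi}$ in the boundary case, so that all the ambient points become $\hat{\xi}$. Keeping the signs straight across the several groups of terms is the main bit of care needed; there is no new difficulty of substance, the genuine analytic work having already been done in the preliminary lemmas.
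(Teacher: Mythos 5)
Your proposal is correct and follows exactly the paper's argument: the paper likewise obtains Lemma~\ref{lem:BD} by combining \eqref{eq:U} and \eqref{eq:I134} with Lemma~\ref{lem:I322} (for the block $I_3-\sum_{\eta}(\pi f(\eta)-\pi f(\xi))c(\xi,\eta)\frac{m(\eta)}{m(O_\xi)}-I_2$) and Lemma~\ref{lem:I4} (for $I_4$, with $\check{\xi}=\hat{\xi}$ since $\xi\in\partial\cK$). No discrepancies.
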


We estimate the right-hand sides of \eqref{eq:INT} and \eqref{eq:BD}.

\begin{lemma}\label{lem:I321b}
Assume \eqref{eq:A} and that $\xi$ belongs to $\cK \setminus  \partial \cK.$ Then, there exists $C>0$ such that
\begin{align*}
&\left|\sum_{\eta \in \mathcal{N}_{\xi}} \left(\int_{0}^1 u(t,\bar{\xi},\bar{\eta})\,dt \right) c(\xi,\eta)\frac{m(\eta)}{m(O_{\xi})} \right| \le C\eps(\xi)\rho(\xi)\sup_{y \in B(\bar{\xi},\rho(\xi))}\|\nabla^2 f(y)\|.
\end{align*}
\end{lemma}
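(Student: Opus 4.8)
\textbf{Proof plan for Lemma~\ref{lem:I321b}.} The plan is to estimate the integrand pointwise and then use the corrector bound from Lemma~\ref{lem:A}. First I would record the two elementary facts that make the sum manageable: since $\xi \in \cK \setminus \partial \cK$ we have $\check{\xi}=\bar{\xi}$, so every $\eta \in \mathcal{N}_{\xi}$ satisfies $\bar{\eta} \in B(\bar{\xi},\rho(\xi))$; and since the cells of a partition meet in $m$-null sets, $m(O_{\xi})=\sum_{\eta \in \mathcal{N}_{\xi}}m(\eta)$, i.e.\ $\sum_{\eta \in \mathcal{N}_{\xi}} m(\eta)/m(O_{\xi})=1$.

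Next I would bound $\int_0^1 u(t,\bar{\xi},\bar{\eta})\,dt$ for a fixed $\eta \in \mathcal{N}_{\xi}$. Recalling $u(t,\bar{\xi},\bar{\eta})=(1-t)\langle \bar{\eta}-\bar{\xi},\nabla^2 f(\bar{\xi}+t(\bar{\eta}-\bar{\xi}))(\bar{\eta}-\bar{\xi})\rangle$, and noting that $\bar{\xi}+t(\bar{\eta}-\bar{\xi})$ lies on the segment from $\bar{\xi}$ to $\bar{\eta}$ and hence in $B(\bar{\xi},\rho(\xi))$, one gets
\[
|u(t,\bar{\xi},\bar{\eta})| \le (1-t)\,|\bar{\eta}-\bar{\xi}|^2\,\|\nabla^2 f(\bar{\xi}+t(\bar{\eta}-\bar{\xi}))\| \le (1-t)\,\rho(\xi)^2 \sup_{y \in B(\bar{\xi},\rho(\xi))}\|\nabla^2 f(y)\|,
\]
and integrating in $t$ (using $\int_0^1(1-t)\,dt=1/2$) yields $\bigl|\int_0^1 u(t,\bar{\xi},\bar{\eta})\,dt\bigr| \le \tfrac12 \rho(\xi)^2 \sup_{y \in B(\bar{\xi},\rho(\xi))}\|\nabla^2 f(y)\|$.

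Finally I would combine this with Lemma~\ref{lem:A}: assumption \eqref{eq:A} holds and $\xi \in \cK \setminus \partial \cK$, so $\max_{\eta \in \mathcal{N}_{\xi}}|c(\xi,\eta)| \le C\eps(\xi)/\rho(\xi)$ with $C$ depending only on $d$ and $c_1$. Then
\[
\left|\sum_{\eta \in \mathcal{N}_{\xi}} \Bigl(\int_{0}^1 u(t,\bar{\xi},\bar{\eta})\,dt \Bigr) c(\xi,\eta)\frac{m(\eta)}{m(O_{\xi})}\right| \le \frac{\rho(\xi)^2}{2}\sup_{y \in B(\bar{\xi},\rho(\xi))}\|\nabla^2 f(y)\| \cdot \frac{C\eps(\xi)}{\rho(\xi)} \sum_{\eta \in \mathcal{N}_{\xi}}\frac{m(\eta)}{m(O_{\xi})},
\]
and since the last sum equals $1$ this is bounded by $\frac{C}{2}\eps(\xi)\rho(\xi)\sup_{y \in B(\bar{\xi},\rho(\xi))}\|\nabla^2 f(y)\|$, which is the claim after relabelling the constant.

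There is no real obstacle here: the only nontrivial input is the corrector estimate $\max_\eta |c(\xi,\eta)| \le C\eps(\xi)/\rho(\xi)$ from Lemma~\ref{lem:A}, and everything else is the pointwise Taylor-remainder bound together with the normalization $\sum_{\eta}m(\eta)/m(O_{\xi})=1$. The one point to be careful about is to use $\check{\xi}=\bar{\xi}$ (valid only because $\xi$ is an interior cell) so that the neighbor centers $\bar{\eta}$ and the whole segment genuinely lie in $B(\bar{\xi},\rho(\xi))$.
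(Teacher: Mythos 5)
Your proposal is correct and follows essentially the same route as the paper's proof: a pointwise bound $|u(t,\bar{\xi},\bar{\eta})| \le (1-t)\rho(\xi)^2 \sup_{y \in B(\bar{\xi},\rho(\xi))}\|\nabla^2 f(y)\|$ using $|\bar{\eta}-\bar{\xi}|<\rho(\xi)$, combined with the corrector estimate $\max_{\eta}|c(\xi,\eta)| \le C\eps(\xi)/\rho(\xi)$ from Lemma~\ref{lem:A} and the normalization $\sum_{\eta}m(\eta)/m(O_{\xi})=1$. The paper states this more tersely but the content is identical.
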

\begin{proof}
For any $\eta \in \mathcal{N}_\xi$, we have $|\bar{\eta}-\bar{\xi}| < \rho(\xi)$. Therefore,  for any $t \in [0,1]$, 
\begin{align*}
\max_{\eta \in \mathcal{N}_\xi}|u(t,\bar{\xi},\bar{\eta})| &=\max_{\eta \in \mathcal{N}_\xi}|(1-t)\langle \bar{\eta}-\bar{\xi}, \nabla^2 f(\bar{\xi}+t(\bar{\eta}-\bar{\xi}))(\bar{\eta}-\bar{\xi})\rangle| \notag \\
&\le(1-t) \rho(\xi)^2 \sup_{y \in B(\bar{\xi},\rho(\xi))}\|\nabla^2 f(y)\|.
\end{align*}
Using this and Lemma~\ref{lem:A},  we obtain the claim.
\end{proof}

\begin{lemma}\label{lem:I322b}
Assume  that $D$ is a $C^{1,\alpha}$-domain for some $\alpha  \in (0,1]$. Also assume that  \eqref{eq:B} holds.   Let $\xi$ belong to $  \partial \cK$.  Then,  there exists $C_D>0$ such that
\begin{align*}
&\left|\sum_{\eta \in \mathcal{N}_{\xi}} \left(\int_{0}^1 u(t,\hat{\xi},\bar{\eta})\,dt \right)c(\xi,\eta) \frac{m(\eta)}{m(O_\xi)}+\Lambda_0(\xi)\int_{0}^1 v(t,\hat{\xi},\bar{\xi})\,dt \right| \notag \\
&\le C_D\left(\frac{ \eps(\xi)}{\rho(\xi)}+\rho(\xi)^\alpha \right) \rho(\xi)^2\sup_{y \in B(\hat{\xi},\rho(\xi))}\|\nabla^2 f(y)\| \\
&\quad +C_D \dl(\xi)^2 \sup_{x \in B(\hat{\xi},\dl(\xi))} \|\nabla^2 f(x)-\nabla^2 f(\hat{\xi}) \|.
\end{align*}
\end{lemma}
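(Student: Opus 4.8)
The plan is to split the left-hand side into two pieces, namely
\[
S_1:=\sum_{\eta \in \mathcal{N}_{\xi}} \left(\int_{0}^1 u(t,\hat{\xi},\bar{\eta})\,dt \right)c(\xi,\eta) \frac{m(\eta)}{m(O_\xi)}
\quad\text{and}\quad
S_2:=\Lambda_0(\xi)\int_{0}^1 v(t,\hat{\xi},\bar{\xi})\,dt,
\]
and estimate each separately. For $S_1$ I would first observe that for every $\eta\in\mathcal N_\xi$ we have $\bar\eta\in B(\hat\xi,\rho(\xi))$ by definition of $\mathcal N_\xi$, so that the whole segment from $\hat\xi$ to $\bar\eta$ lies in $B(\hat\xi,\rho(\xi))$; hence $|u(t,\hat\xi,\bar\eta)|\le (1-t)\,\rho(\xi)^2\sup_{y\in B(\hat\xi,\rho(\xi))}\|\nabla^2 f(y)\|$ and the $t$-integral contributes a factor $1/2$. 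Then $\sum_{\eta}|c(\xi,\eta)|\,m(\eta)/m(O_\xi)\le\max_{\eta\in\mathcal N_\xi}|c(\xi,\eta)|$, and the bound $\max_{\eta\in\mathcal N_\xi}|c(\xi,\eta)|\le C_D(\eps(\xi)/\rho(\xi)+\rho(\xi)^\alpha)$ from Lemma~\ref{lem:B} finishes this piece, producing exactly the first term on the right-hand side.

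For $S_2$ the point is that the integrand is a \emph{difference} of Hessians: $v(t,\hat\xi,\bar\xi)=(1-t)\langle \bar\xi-\hat\xi,(\nabla^2 f(\hat\xi+t(\bar\xi-\hat\xi))-\nabla^2 f(\hat\xi))(\bar\xi-\hat\xi)\rangle$. Since $\xi\in\partial\cK$, Lemma~\ref{lem:ed}~(2) gives $|\bar\xi-\hat\xi|<\dl(\xi)$, so the segment from $\hat\xi$ to $\bar\xi$ is contained in $B(\hat\xi,\dl(\xi))$ and therefore
\[
\left|\int_0^1 v(t,\hat\xi,\bar\xi)\,dt\right|\le \tfrac12\,\dl(\xi)^2\sup_{x\in B(\hat\xi,\dl(\xi))}\|\nabla^2 f(x)-\nabla^2 f(\hat\xi)\|.
\]
It remains to control $\Lambda_0(\xi)=\sum_{\eta\in\mathcal N_\xi}(1-c(\xi,\eta))\,m(\eta)/m(O_\xi)$, which is bounded by $1+\max_{\eta\in\mathcal N_\xi}|c(\xi,\eta)|\le 1+C_D(c_2+1)$ — a constant depending only on $D$ and $c_2$ under \eqref{eq:B} — so $S_2$ is dominated by $C_D\,\dl(\xi)^2\sup_{x\in B(\hat\xi,\dl(\xi))}\|\nabla^2 f(x)-\nabla^2 f(\hat\xi)\|$, the second term on the right-hand side. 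Combining the two estimates by the triangle inequality gives the claim.

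I expect no genuine obstacle here; the lemma is a routine combination of Taylor remainder bounds with the corrector estimate of Lemma~\ref{lem:B} and the elementary geometric facts $\bar\eta\in B(\hat\xi,\rho(\xi))$ and $|\bar\xi-\hat\xi|<\dl(\xi)$. The only point requiring a little care is the bookkeeping of which ball ($B(\hat\xi,\rho(\xi))$ versus $B(\hat\xi,\dl(\xi))$) and which modulus (plain $\|\nabla^2 f\|$ versus the oscillation $\|\nabla^2 f(\cdot)-\nabla^2 f(\hat\xi)\|$) each term should be paired with, so that the final bound matches the stated form; in particular one must not be tempted to put $v$ under the cruder $\sup\|\nabla^2 f\|$ bound, since the $\dl(\xi)^2$-gain only appears once the Hessian oscillation is isolated. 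A minor subtlety is that in $S_2$ we use the full smallness of $\Lambda_0(\xi)$ only as boundedness, not smallness, which is why the $\dl(\xi)^2$ factor (rather than $\eps(\xi)\dl(\xi)$ or similar) is the right order of magnitude to report.
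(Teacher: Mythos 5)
Your proposal is correct and follows essentially the same route as the paper: the same pointwise bounds $|u(t,\hat\xi,\bar\eta)|\le(1-t)\rho(\xi)^2\sup_{B(\hat\xi,\rho(\xi))}\|\nabla^2 f\|$ and $|v(t,\hat\xi,\bar\xi)|\le(1-t)\dl(\xi)^2\sup_{B(\hat\xi,\dl(\xi))}\|\nabla^2 f(\cdot)-\nabla^2 f(\hat\xi)\|$ (via Lemma~\ref{lem:ed}~(2)), combined with the corrector estimate of Lemma~\ref{lem:B} applied to $\max_\eta|c(\xi,\eta)|$ for the first sum and to the bound $\Lambda_0(\xi)\le 1+\max_\eta|c(\xi,\eta)|$ for the second. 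No gaps.
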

\begin{proof}
We have $|\bar{\eta}-\hat{\xi}| < \rho(\xi)$ for any $\eta \in \mathcal{N}_\xi$.  Thus,  for any $t \in [0,1]$, 
\begin{align}\label{eq:w4}
\max_{\eta \in \mathcal{N}_\xi}|u(t,\hat{\xi},\bar{\eta})| &=\max_{\eta \in \mathcal{N}_\xi}|(1-t)\langle \bar{\eta}-\hat{\xi}, \nabla^2 f(\hat{\xi}+t(\bar{\eta}-\hat{\xi}))(\bar{\eta}-\hat{\xi})\rangle|  \\
&\le (1-t)\rho(\xi)^2 \sup_{y \in B(\hat{\xi},\rho(\xi))}\|\nabla^2 f(y)\|.  \notag
\end{align}
By using Lemma~\ref{lem:ed}~(2),  we have for any $t \in [0,1]$, 
\begin{align*}
|v(t,\hat{\xi},\bar{\xi})|&=(1-t)|\langle \bar{\xi}-\hat{\xi}, (\nabla^2 f(\hat{\xi}+t(\bar{\xi}-\hat{\xi}))-\nabla^2 f(\hat{\xi}))(\bar{\xi}-\hat{\xi})\rangle| \\
&\le  (1-t) \dl(\xi)^2 \sup_{x \in B(\hat{\xi},\dl(\xi))} \|\nabla^2 f(x)-\nabla^2 f(\hat{\xi}) \|.
\end{align*}
Therefore,  
\begin{align}
&\left| \Lambda_0(\xi)\int_{0}^1 v(t,\hat{\xi},\bar{\xi})\,dt \right| \label{eq:w5} \\
&\le \frac{1}{2}\left( 1+\max_{\eta \in \mathcal{N}_{\xi}} |c(\xi,\eta)|\right) \dl(\xi)^2 \sup_{x \in B(\hat{\xi},\dl(\xi))} \|\nabla^2 f(x)-\nabla^2 f(\hat{\xi}) \|.  \notag 
\end{align}
Combining \eqref{eq:w4},  \eqref{eq:w5},  and Lemma~\ref{lem:B},  we obtain the conclusion. 
\end{proof}

\begin{lemma}\label{lem:I4b}
It follows that 
\begin{align*}
&\left|  \sum_{\eta \in \mathcal{N}_{\xi}}\left( \int_{0}^1 v(t,\check{\xi},\bar{\eta}) \,dt \right) \frac{m(\eta)}{m(O_{\xi})} -\frac{1}{2} \sum_{\eta \in \mathcal{N}_{\xi}} \left( \fint_{\eta } u(0,\bar{\eta},y)\,m(dy) \right)\frac{m(\eta)}{m(O_{\xi})} \right.  \\ 
 &\quad \left. +\frac{1}{2} \sum_{\eta \in \mathcal{N}_{\xi}} \left( \fint_{\eta } \langle y-\check{\xi}, (\nabla^2 f(\check{\xi})-\nabla^2 f(\bar{\eta}))(\bar{\eta}-y) \rangle\,m(dy) \right)\frac{m(\eta)}{m(O_{\xi})} \right| \\
&\le \left(\rho(\xi)^2+\eps(\xi)^2+\eps(\xi)\rho(\xi)  \right)\sup_{y \in B(\check{\xi},\rho(\xi))} \|\nabla^2 f(y)-\nabla^2 f(\check{\xi})\|\\
&\quad+\frac{\eps(\xi)^2}{2}  \max_{\eta \in \mathcal{N}_\xi}\|\nabla^2 f(\bar{\eta}) \| .
\end{align*}
\end{lemma}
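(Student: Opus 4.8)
The plan is to estimate each of the three sums appearing on the left-hand side by bounding the integrands termwise and then summing against the weights $m(\eta)/m(O_\xi)$, which total $1$ by the definition of $O_\xi$. First I would treat the $v$-term: for any $\eta \in \mathcal N_\xi$ and any $t \in [0,1]$, since $\bar\eta \in B(\check\xi,\rho(\xi))$ we have $|\bar\eta - \check\xi| < \rho(\xi)$, so $|v(t,\check\xi,\bar\eta)| \le (1-t)\rho(\xi)^2 \sup_{y \in B(\check\xi,\rho(\xi))}\|\nabla^2 f(y) - \nabla^2 f(\check\xi)\|$; integrating over $t$ picks up a factor $1/2$, and summing over $\eta$ leaves $\tfrac12\rho(\xi)^2\sup_{y \in B(\check\xi,\rho(\xi))}\|\nabla^2 f(y) - \nabla^2 f(\check\xi)\|$. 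The key geometric input here (and below) is Lemma~\ref{lem:ed}~(1): for $\eta \in \mathcal N_\xi$ and $y \in \eta$ one has $|y - \bar\eta| \le \eps(\xi)$, and moreover $|\bar\eta - \check\xi| < \rho(\xi)$, so $|y - \check\xi| \le \rho(\xi) + \eps(\xi)$ and $y \in B(\check\xi,\rho(\xi))$ after enlarging the radius appropriately (here one should be slightly careful about whether the ball is open or closed, but since the supremum is over $B(\check\xi,\rho(\xi))$ in the claim, the estimate $|y-\check\xi|<\rho(\xi)$ suffices when $\eps(\xi) \le \rho(\xi)$ and one uses continuity; more simply, one notes $v$ and $u$ vanish identically when $\nabla^2 f$ is constant so only the modulus of continuity enters).

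For the second sum, $\fint_\eta u(0,\bar\eta,y)\,m(dy)$, the integrand is $u(0,\bar\eta,y) = \langle y - \bar\eta, \nabla^2 f(\bar\eta)(y-\bar\eta)\rangle$, which is bounded by $|y-\bar\eta|^2\|\nabla^2 f(\bar\eta)\| \le \eps(\xi)^2 \|\nabla^2 f(\bar\eta)\|$ by Lemma~\ref{lem:ed}~(1); summing over $\eta$ and inserting the factor $\tfrac12$ gives the contribution $\tfrac{\eps(\xi)^2}{2}\max_{\eta \in \mathcal N_\xi}\|\nabla^2 f(\bar\eta)\|$. For the third sum, the integrand is $\langle y - \check\xi, (\nabla^2 f(\check\xi) - \nabla^2 f(\bar\eta))(\bar\eta - y)\rangle$, which is bounded in absolute value by $|y-\check\xi|\,|\bar\eta - y|\,\|\nabla^2 f(\check\xi) - \nabla^2 f(\bar\eta)\|$. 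Here $|\bar\eta - y| \le \eps(\xi)$, $|y - \check\xi| \le |y - \bar\eta| + |\bar\eta - \check\xi| < \eps(\xi) + \rho(\xi)$, and since both $\check\xi$ and $\bar\eta$ lie in $B(\check\xi,\rho(\xi))$ we have $\|\nabla^2 f(\check\xi) - \nabla^2 f(\bar\eta)\| \le \sup_{y \in B(\check\xi,\rho(\xi))}\|\nabla^2 f(y) - \nabla^2 f(\check\xi)\|$; inserting the factor $\tfrac12$ this gives $\tfrac12 \eps(\xi)(\rho(\xi)+\eps(\xi))\sup_{y \in B(\check\xi,\rho(\xi))}\|\nabla^2 f(y) - \nabla^2 f(\check\xi)\|$.

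Finally I would collect the three bounds: the $v$-term contributes $\tfrac12\rho(\xi)^2 \sup\|\nabla^2 f - \nabla^2 f(\check\xi)\|$, the third term contributes $\tfrac12(\eps(\xi)\rho(\xi) + \eps(\xi)^2)\sup\|\nabla^2 f - \nabla^2 f(\check\xi)\|$, and the second term contributes $\tfrac{\eps(\xi)^2}{2}\max_\eta\|\nabla^2 f(\bar\eta)\|$. Adding these (and bounding $\tfrac12(\rho^2 + \eps\rho + \eps^2) \le \rho^2 + \eps^2 + \eps\rho$ crudely) yields exactly the stated inequality. There is no real obstacle here; the only point requiring mild care is the bookkeeping of which base point ($\check\xi$ versus $\bar\eta$) each modulus of continuity is evaluated relative to, and confirming that all the relevant points $\bar\eta$, $y$, $\check\xi$ land inside $B(\check\xi,\rho(\xi))$ — which holds provided $\eps(\xi) \le \rho(\xi)$, a condition guaranteed under \eqref{eq:A} or \eqref{eq:B} via $c_1, c_2 < 1$, so it is harmless to invoke it. I would state the proof uniformly in $\check\xi$ so that it covers both $\xi \in \cK\setminus\partial\cK$ (where $\check\xi = \bar\xi$) and $\xi \in \partial\cK$ (where $\check\xi = \hat\xi$), since the algebra is identical.
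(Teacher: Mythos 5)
Your proposal is correct and follows essentially the same route as the paper: termwise bounds on the three sums using $|\bar{\eta}-\check{\xi}|<\rho(\xi)$ and Lemma~\ref{lem:ed}~(1), followed by the crude combination $\tfrac12(\rho^2+\eps\rho+\eps^2)\le\rho^2+\eps^2+\eps\rho$. The only difference is your extra caution about points landing in $B(\check{\xi},\rho(\xi))$, which is unnecessary since every point at which $\nabla^2 f$ is compared to $\nabla^2 f(\check{\xi})$ has the form $\check{\xi}+t(\bar{\eta}-\check{\xi})$ or $\bar{\eta}$ and hence already lies in the open ball.
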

\begin{proof}
For any $ \eta \in \mathcal{N}_{\xi}$, we have 
$|\bar{\eta}-\check{\xi}| < \rho(\xi)$. Therefore,  we have for any  $\eta \in \mathcal{N}_{\xi}$ and $t \in [0,1]$, 
\begin{align*}
|v(t,\check{\xi},\bar{\eta})| &\le (1-t) \|\nabla^2 f(\check{\xi}+t(\bar{\eta}-\check{\xi}))-\nabla^2 f(\check{\xi}) \| |\bar{\eta}-\check{\xi}|^2  \notag \\
&\le (1-t)  \rho(\xi)^2 \sup_{y \in B(\check{\xi},\rho(\xi))} \|\nabla^2 f(y)-\nabla^2 f(\check{\xi})\|.
\end{align*}
This implies that
\begin{align}\label{eq:I4b1}
&\left|\sum_{\eta \in \mathcal{N}_{\xi}}\left( \int_{0}^1 v(t,\check{\xi},\bar{\eta}) \,dt \right) \frac{m(\eta)}{m(O_{\xi})} \right|  \le \frac{\rho(\xi)^2}{2} \sup_{y \in B(\check{\xi},\rho(\xi))} \|\nabla^2 f(y)-\nabla^2 f(\check{\xi})\|. 
\end{align}
From Lemma~\ref{lem:ed}~(1),  we have  $|y-\bar{\eta}| \le \eps(\xi)$ for any $ \eta \in \mathcal{N}_{\xi}$ and $y \in \eta$.   Therefore, 
\begin{align}\label{eq:I4b2}
&\left| \frac{1}{2} \sum_{\eta \in \mathcal{N}_{\xi}} \left( \fint_{\eta } u(0,\bar{\eta},y)\,m(dy) \right)\frac{m(\eta)}{m(O_{\xi})} \right| \le  \frac{\eps(\xi)^2}{2}  \max_{\eta \in \mathcal{N}_\xi}\|\nabla^2 f(\bar{\eta}) \|.
\end{align}
We also use Lemma~\ref{lem:ed}~(1) to see that for any $ \eta \in \mathcal{N}_{\xi}$ and $y \in \eta$,
\[|y-\check{\xi}| \le |y-\bar{\eta}|+|\bar{\eta}-\check{\xi}| \le \eps(\xi)+\rho(\xi) 
\]
and
\begin{align}\label{eq:I4b3}
&|\langle y-\check{\xi}, (\nabla^2 f(\check{\xi})-\nabla^2 f(\bar{\eta}))(\bar{\eta}-y) \rangle|  \\
& \le  \eps(\xi)(\eps(\xi)+\rho(\xi))   \sup_{y \in B(\check{\xi},\rho(\xi))} \|\nabla^2 f(y)-\nabla^2 f(\check{\xi})\| . \notag
\end{align}
Combining  \eqref{eq:I4b1},  \eqref{eq:I4b2},  and  \eqref{eq:I4b3},  we complete the proof. 
\end{proof}

\begin{lemma}\label{lem:QL}
If $\xi \in \cK \setminus \partial \cK$,  we have 
\begin{align}\label{eq:QL1}
&\left|\text{\rm Tr}\left[ \frac{\widetilde{Q}(\xi)}{q(\xi)}\nabla^2 f(\bar{\xi})\right]-\pi \left( \Delta f\right)(\xi)\right| \le d\left\| \frac{\widetilde{Q}(\xi)}{q(\xi)}-I_d \right\| \| \nabla^2 f(\bar{\xi})\| .
\end{align}
If  $\xi \in \partial \cK$,  we have 
\begin{align}
&\left| \text{\rm Tr}\left[ \frac{\widetilde{Q}(\xi)}{q(\xi)}\nabla^2 f(\hat{\xi})\right]-\pi \left( \Delta f\right)(\xi) \right|  \label{eq:QL2}  \\
&\le d\left\| \frac{\widetilde{Q}(\xi)}{q(\xi)}-I_d\right\| \| \nabla^2 f(\hat{\xi})\|  +\sup_{x \in B(\bar{\xi},\dl(\xi))}|\Delta f(x)-\Delta f(\bar{\xi})|.\notag
\end{align}
\end{lemma}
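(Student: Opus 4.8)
The plan is to rewrite both sides of \eqref{eq:QL1}--\eqref{eq:QL2} as traces of products of $\frac{\widetilde{Q}(\xi)}{q(\xi)}-I_d$ with a Hessian of $f$, and then invoke an elementary estimate on $|\mathrm{Tr}[AM]|$. Concretely, I would first record that for every $A\in\R^d\otimes\R^d$ and every symmetric $M\in\R^d\otimes\R^d$ one has $|\mathrm{Tr}[AM]|\le d\|A\|\,\|M\|$: diagonalizing $M=\sum_{k=1}^d\mu_k\, v_k\otimes v_k$ with $\{v_k\}_{k=1}^d$ an orthonormal eigenbasis gives $\mathrm{Tr}[AM]=\sum_{k=1}^d\mu_k\langle v_k,Av_k\rangle$, whence $|\mathrm{Tr}[AM]|\le\sum_{k=1}^d|\mu_k|\,\|A\|\le d\|A\|\,\|M\|$, using $|\mu_k|\le\|M\|$. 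Note $\nabla^2 f$ is symmetric since $f\in C^2(\R^d)$.

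For $\xi\in\cK\setminus\partial\cK$ we have $\check\xi=\bar\xi$ and $\pi(\Delta f)(\xi)=\Delta f(\bar\xi)=\mathrm{Tr}[\nabla^2 f(\bar\xi)]$, so
\[
\mathrm{Tr}\!\left[\frac{\widetilde{Q}(\xi)}{q(\xi)}\nabla^2 f(\bar\xi)\right]-\pi(\Delta f)(\xi)
=\mathrm{Tr}\!\left[\left(\frac{\widetilde{Q}(\xi)}{q(\xi)}-I_d\right)\nabla^2 f(\bar\xi)\right],
\]
and the estimate from the first paragraph with $A=\frac{\widetilde{Q}(\xi)}{q(\xi)}-I_d$, $M=\nabla^2 f(\bar\xi)$ gives \eqref{eq:QL1}. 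For $\xi\in\partial\cK$ we have $\check\xi=\hat\xi$; here I would split
\[
\mathrm{Tr}\!\left[\frac{\widetilde{Q}(\xi)}{q(\xi)}\nabla^2 f(\hat\xi)\right]-\pi(\Delta f)(\xi)
=\mathrm{Tr}\!\left[\left(\frac{\widetilde{Q}(\xi)}{q(\xi)}-I_d\right)\nabla^2 f(\hat\xi)\right]
+\bigl(\Delta f(\hat\xi)-\Delta f(\bar\xi)\bigr),
\]
bound the first term by $d\|\frac{\widetilde{Q}(\xi)}{q(\xi)}-I_d\|\,\|\nabla^2 f(\hat\xi)\|$ exactly as above, and handle the second using Lemma~\ref{lem:ed}(2): since $|\bar\xi-\hat\xi|<\dl(\xi)$ we have $\hat\xi\in B(\bar\xi,\dl(\xi))$, so $|\Delta f(\hat\xi)-\Delta f(\bar\xi)|\le\sup_{x\in B(\bar\xi,\dl(\xi))}|\Delta f(x)-\Delta f(\bar\xi)|$. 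Adding the two bounds yields \eqref{eq:QL2}.

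There is essentially no obstacle here; the only points requiring (minor) care are the elementary trace inequality and keeping track of which base point — $\bar\xi$ in the interior, $\hat\xi$ on the boundary — enters $\widetilde{Q}(\xi)$ as opposed to $\pi(\Delta f)(\xi)$. This bookkeeping is precisely what forces the extra term $\sup_{x\in B(\bar\xi,\dl(\xi))}|\Delta f(x)-\Delta f(\bar\xi)|$ in the boundary estimate, where $\widetilde{Q}(\xi)$ is centered at $\hat\xi$ while $\pi(\Delta f)(\xi)=\Delta f(\bar\xi)$.
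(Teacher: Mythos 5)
Your proposal is correct and follows essentially the same route as the paper: write the difference as $\mathrm{Tr}[(\widetilde{Q}(\xi)/q(\xi)-I_d)\nabla^2 f(\check{\xi})]+(\Delta f(\check{\xi})-\Delta f(\bar{\xi}))$, bound the trace term by $d\|\cdot\|\,\|\nabla^2 f(\check{\xi})\|$, and control the remainder in the boundary case via $|\bar{\xi}-\hat{\xi}|<\dl(\xi)$ from Lemma~\ref{lem:ed}~(2). If anything, your final step matches the stated bound more literally than the paper's own display (which centers the supremum at $\hat{\xi}$ rather than $\bar{\xi}$), but the two are interchangeable here.
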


\begin{proof}
From the definition of $\pi$,  it follows that 
\begin{align}\label{eq:QL3}
&\left| \text{\rm Tr}\left[ \frac{\widetilde{Q}(\xi)}{q(\xi)}\nabla^2 f(\check{\xi})\right]-\pi \left( \Delta f\right)(\xi) \right|   \\
&=\left| \text{\rm Tr}\left[ \left(\frac{\widetilde{Q}(\xi)}{q(\xi)}-I_d \right)\nabla^2 f(\check{\xi})\right]+ \left(\Delta f(\check{\xi})- \Delta f(\bar{\xi}) \right) \right| \notag \\
&\le d \left\| \frac{\widetilde{Q}(\xi)}{q(\xi)}-I_d\right\| \| \nabla^2 f(\check{\xi})\| +|\Delta f(\check{\xi})- \Delta f(\bar{\xi})| .  \notag
\end{align}
If $\xi \in \cK \setminus  \partial \cK$,  the second term of the right-hand side of the above equation is equal to zero.  Thus,   \eqref{eq:QL1} follows.  If $\xi \in \partial \cK$,  Lemma~\ref{lem:ed}~(2) implies that
\begin{align}
 |\Delta f(\hat{\xi})- \Delta f(\bar{\xi}) | \le \sup_{x \in B(\hat{\xi},\dl(\xi))}|\Delta f(x)-\Delta f(\hat{\xi})|. \label{eq:QL4}
\end{align}
 We obtain \eqref{eq:QL2} from \eqref{eq:QL3} and \eqref{eq:QL4}.
\end{proof}

We now prove Theorems~\ref{thm:1} and \ref{thm:2}.

\begin{proof}[Proof of Theorem~\ref{thm:1}]
Using Lemmas~\ref{lem:INT},  \ref{lem:I321b},  and \ref{lem:I4b},  we see that there exists $C>0$  such that 
\begin{align*}
&\left| U(\pi f)(\xi)-\sum_{\eta \in \mathcal{N}_{\xi}} (\pi f(\eta)-\pi f(\xi))c(\xi,\eta)\frac{m(\eta)}{m(O_{\xi})} - \frac{1}{2}\text{\rm Tr}\left[ \widetilde{Q}(\xi)\nabla^2 f(\bar{\xi})\right] \right|\\
&\le C\eps(\xi)\rho(\xi) \sup_{y \in B(\bar{\xi},\rho(\xi))}\|\nabla^2 f(y)\|\\ 
 &\quad+  \left(\rho(\xi)^2+\eps(\xi)^2+\eps(\xi)\rho(\xi)  \right)\sup_{y \in B(\bar{\xi},\rho(\xi))} \|\nabla^2 f(y)-\nabla^2 f(\bar{\xi})\|\\
&\quad+\frac{\eps(\xi)^2}{2}  \max_{\eta \in \mathcal{N}_\xi}\|\nabla^2 f(\bar{\eta}) \| \\
&\le  C\left(\eps(\xi)\rho(\xi)+\frac{\eps(\xi)^2}{2}   \right)\sup_{y \in B(\bar{\xi},\rho(\xi))}\|\nabla^2 f(y)\|\\
&\quad+ \left(\rho(\xi)^2+\eps(\xi)^2+\eps(\xi)\rho(\xi)  \right) \sup_{y \in B(\bar{\xi},\rho(\xi))} \|\nabla^2 f(y)-\nabla^2 f(\bar{\xi})\|.
\end{align*}
In the last inequality,  we used the fact that $|\bar{\eta}-\bar{\xi}|<\rho(\xi)$ for $\eta \in \mathcal{N}_\xi$.  Dividing both sides of the above inequality by $q(\xi)$ and applying \eqref{eq:A} and the third inequality of Lemma~\ref{lem:A} to the resulting one,  we see that there exists $C>0$ such that
\begin{align*}
&\left| L(\pi f)(\xi)- \frac{1}{2}\text{\rm Tr}\left[ \frac{\widetilde{Q}(\xi)}{q(\xi)}\nabla^2 f(\bar{\xi})\right] \right|\\
&\le \frac{C\eps(\xi)}{\rho(\xi)} \sup_{y \in B(\bar{\xi},\rho(\xi))}\|\nabla^2 f(y)\| +C \sup_{y \in B(\bar{\xi},\rho(\xi))} \|\nabla^2 f(y)-\nabla^2 f(\bar{\xi})\|.
\end{align*}
Applying Lemmas~\ref{lem:Q/q} and \ref{lem:QL} to  the left-hand side of the above inequality,  we complete the proof.
\end{proof}

\begin{proof}[Proof of Theorem~\ref{thm:2}]
Applying  Lemmas~\ref{lem:I322b} and \ref{lem:I4b}  to \eqref{eq:BD},  we see that  there exists $C_D>0$ such that 
\begin{align*}
&\Biggl| U(\pi f)(\xi)-\sum_{\eta \in \mathcal{N}_{\xi}} (\pi f(\eta)-\pi f(\xi))c(\xi,\eta)\frac{m(\eta)}{m(O_{\xi})} - \frac{1}{2}\text{\rm Tr}\left[ \widetilde{Q}(\xi)\nabla^2 f(\hat{\xi})\right]     \\
&-\langle \nabla f(\hat{\xi}),\Lambda_1(\xi) \rangle+\frac{1}{2}\text{\rm Tr}\left[\Lambda_2(\xi)\nabla^2 f(\hat{\xi}) \right]  \Biggr|\\
&\le C_D \rho(\xi)^2\left(\frac{ \eps(\xi)}{\rho(\xi)}+\rho(\xi)^\alpha \right) \sup_{y \in B(\hat{\xi},\rho(\xi))}\|\nabla^2 f(y)\| \\
&\quad +C_D \dl(\xi)^2 \sup_{x \in B(\hat{\xi},\dl(\xi))} \|\nabla^2 f(x)-\nabla^2 f(\hat{\xi})) \|\\
&\quad +  \left(\rho(\xi)^2+\eps(\xi)^2+\eps(\xi)\rho(\xi)  \right) \sup_{y \in B(\hat{\xi},\rho(\xi))} \|\nabla^2 f(y)-\nabla^2 f(\hat{\xi})\|\\
&\quad+\frac{\eps(\xi)^2}{2}  \max_{\eta \in \mathcal{N}_\xi}\|\nabla^2 f(\bar{\eta}) \|.
\end{align*} 
Divide both sides of the above inequality by $q(\xi)$.  Then,  applying \eqref{eq:B} and the third inequality of Lemma~\ref{lem:B} to the resulting one,  and using the facts that $\rho(\xi)>\dl(\xi)$ and $|\bar{\eta}-\hat{\xi}|<\rho(\xi)$ for $\eta \in \mathcal{N}_\xi$,   we see that there exists $C_D>0$ such that
\begin{align*}
&\left| L(\pi f)(\xi) - \frac{1}{2}\text{\rm Tr}\left[ \frac{Q(\xi)}{q(\xi)} \nabla^2 f(\hat{\xi})\right] -\frac{1}{q(\xi)} \left( \langle \nabla f(\hat{\xi}),\Lambda_1(\xi) \rangle -\frac{1}{2}\text{\rm Tr}\left[\Lambda_2(\xi)\nabla^2 f(\hat{\xi}) \right]\right) \right| \notag \\
&\le C_D \left(\frac{ \eps(\xi)}{\rho(\xi)}+\rho(\xi)^\alpha \right) \sup_{y \in B(\hat{\xi},\rho(\xi))}\|\nabla^2 f(y)\|  \notag \\
&\quad +C_D \sup_{y \in B(\hat{\xi},\rho(\xi))} \|\nabla^2 f(y)-\nabla^2 f(\hat{\xi}) \|. 
\end{align*} 
Applying Lemmas~\ref{lem:Q/q2}  and \ref{lem:QL} to the left-hand side of the above inequality,  we obtain  
\begin{align}
&\left| L(\pi f)(\xi) -\pi \left(\frac{\Delta f}{2}\right) (\xi)-\frac{1}{q(\xi)} \left( \langle \nabla f(\hat{\xi}),\Lambda_1(\xi) \rangle -\frac{1}{2}\text{\rm Tr}\left[\Lambda_2(\xi)\nabla^2 f(\hat{\xi}) \right]\right) \right| \label{eq:LN0} \\
&\le C_D \left(\frac{ \eps(\xi)}{\rho(\xi)}+\rho(\xi)^\alpha \right) \sup_{y \in B(\hat{\xi},\rho(\xi))}\|\nabla^2 f(y)\| \notag \\
&\quad +C_D \sup_{y \in B(\hat{\xi},\rho(\xi))} \|\nabla^2 f(y)-\nabla^2 f(\hat{\xi}) \| \notag 
\end{align} 
for some $C_D>0$. We now assume that $f \in C_{\text{Neu}}^2(\R^d)$.   From the definition of  $\hat{\xi}$,  the vector $\Lambda_1(\xi)$ is a constant multiple of $\nu(\hat{\xi})$.  
Because $f$ satisfies $\langle \nabla f(x),\nu(x)\rangle =0$ for every $x \in \partial D$,   we have
\begin{equation}\label{eq:LN2}
\langle \nabla f(\hat{\xi}),\Lambda_1(\xi) \rangle =0.
\end{equation}
The fact that $|\bar{\xi}-\hat{\xi}|< \dl(\xi)$ and the third inequality of Lemma~\ref{lem:B} imply that there exists $C_D>0$ such that
\begin{equation}\label{eq:LN3}
\left|\frac{ \Lambda_2(\xi)}{q(\xi)} \right| \le \frac{C_D\dl(\xi)}{\rho(\xi)}.
\end{equation}
Applying \eqref{eq:LN2}  and \eqref{eq:LN3} to the left-hand side of \eqref{eq:LN0},  we obtain the conclusion.
\end{proof}

\section{Proof of Theorem~\ref{thm:4}}
 Let $n \in \N$.  As explained in Section~2,  we may assume that $L^{(n)}$ is a bounded linear operator on $\mathcal{B}_b(\cK^{(n)})$ and  generates a continuous-time Markov chain $X^{(n)}=(\{X_t^{(n)}\}_{t \ge 0},\{P_\xi^{(n)}\}_{\xi \in \cK^{(n)}})$ on $\cK^{(n)}$.   Hence,  we have \eqref{eq:cflgn}.  Recall that the semigroup $\{p_t^{(n)}\}_{t> 0}$ associated with $X^{(n)}$ is given by $p_t^{(n)}=e^{t L^{(n)}}$,  $t>0$.    This and the fact that $L^{(n)}\bone_{\cK^{(n)}}=0$ imply the following lemma.

\begin{lemma}\label{lem:feller}
The semigroup $\{p_t^{(n)}\}_{t> 0}$  is strongly continuous and contractive on $\mathcal{B}_b(\cK^{(n)})$.  The generator of $\{p_t^{(n)}\}_{t> 0}$  is given by $(L^{(n)},\mathcal{B}_b(\cK^{(n)}))$.   Moreover,  we have $p_t^{(n)}\bone_{\cK^{(n)}}=1$ for any $t> 0$ and $\xi \in \cK^{(n)}$.
\end{lemma}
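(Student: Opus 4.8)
The plan is to obtain each assertion directly from the fact that $L^{(n)}$ is a bounded linear operator on the Banach space $\mathcal{B}_b(\cK^{(n)})$ together with the structural properties \eqref{eq:cflgn}. First, since $L^{(n)}$ is bounded, the exponential series $p_t^{(n)}=e^{tL^{(n)}}=\sum_{k=0}^\infty (t^k/k!)(L^{(n)})^k$ converges in operator norm for every $t>0$; the map $t\mapsto p_t^{(n)}$ is then norm-continuous, hence strongly continuous, and differentiating the series at $t=0$ shows that its generator is $(L^{(n)},\mathcal{B}_b(\cK^{(n)}))$ with domain the whole space. This is the standard theory of uniformly continuous semigroups (see \cite[Chapter~4.2]{EK}). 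For conservativeness, note that the constant function $\bone_{\cK^{(n)}}$ satisfies $f(\eta)-f(\xi)=0$ for all $\xi\in\cK^{(n)}$ and $\eta\in\mathcal{N}_\xi^{(n)}$, so $L^{(n)}\bone_{\cK^{(n)}}=0$; plugging this into the exponential series gives $p_t^{(n)}\bone_{\cK^{(n)}}=\bone_{\cK^{(n)}}$ for every $t>0$.

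The remaining point, contractivity, I would obtain by writing $L^{(n)}$ in the form $L^{(n)}=\mu(P^{(n)}-I)$ for a suitable $\mu\in(0,\infty)$ and a positivity-preserving operator $P^{(n)}$ fixing $\bone_{\cK^{(n)}}$. Concretely, set
\[
\lambda^{(n)}(\xi)=\frac{1}{q^{(n)}(\xi)}\sum_{\eta\in\mathcal{N}_\xi^{(n)}}\bigl(1-c^{(n)}(\xi,\eta)\bigr)\frac{m(\eta)}{m(O_\xi^{(n)})}.
\]
By \eqref{eq:cflgn} each summand is nonnegative, and since the cells of $\cK^{(n)}$ have pairwise $m$-null intersections we have $\sum_{\eta\in\mathcal{N}_\xi^{(n)}}m(\eta)/m(O_\xi^{(n)})=1$; hence $0\le\lambda^{(n)}(\xi)\le 2/q^{(n)}(\xi)\le 2/\inf_{\zeta\in\cK^{(n)}}q^{(n)}(\zeta)=:\mu<\infty$, again by \eqref{eq:cflgn}. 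Then $P^{(n)}:=I+\mu^{-1}L^{(n)}$ acts by
\[
P^{(n)}f(\xi)=\bigl(1-\mu^{-1}\lambda^{(n)}(\xi)\bigr)f(\xi)+\frac{1}{\mu\,q^{(n)}(\xi)}\sum_{\eta\in\mathcal{N}_\xi^{(n)}}f(\eta)\bigl(1-c^{(n)}(\xi,\eta)\bigr)\frac{m(\eta)}{m(O_\xi^{(n)})},
\]
which has nonnegative coefficients and sends $\bone_{\cK^{(n)}}$ to itself, so $\|P^{(n)}g\|_\infty\le\|g\|_\infty$ for all $g$. Consequently
\[
p_t^{(n)}=e^{tL^{(n)}}=e^{-\mu t}e^{\mu t P^{(n)}}=e^{-\mu t}\sum_{k=0}^\infty\frac{(\mu t)^k}{k!}\bigl(P^{(n)}\bigr)^k
\]
is a nonnegative-coefficient combination of the positive contractions $(P^{(n)})^k$, hence itself positivity-preserving. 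Combining this with $p_t^{(n)}\bone_{\cK^{(n)}}=\bone_{\cK^{(n)}}$: for any $g$ with $\|g\|_\infty\le 1$ we have $-\bone_{\cK^{(n)}}=-p_t^{(n)}\bone_{\cK^{(n)}}\le p_t^{(n)}g\le p_t^{(n)}\bone_{\cK^{(n)}}=\bone_{\cK^{(n)}}$, i.e. $\|p_t^{(n)}g\|_\infty\le 1$, which is contractivity.

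I do not expect a genuine obstacle here: the statement is elementary semigroup theory. The only place the hypotheses of the section enter is in the bound $\mu<\infty$ and the sign condition on the transition coefficients, and the mildly delicate point is simply to use \eqref{eq:cflgn} correctly, namely the strict inequality $|c^{(n)}(\xi,\eta)|<1$ (so that $1-c^{(n)}(\xi,\eta)>0$ and $P^{(n)}$ has genuinely nonnegative coefficients) together with $\inf_{\xi}q^{(n)}(\xi)>0$ (so that $\mu<\infty$ and the decomposition $L^{(n)}=\mu(P^{(n)}-I)$ makes sense). Once these are in place the rest is bookkeeping.
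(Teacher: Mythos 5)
Your proof is correct and follows essentially the same route as the paper, which simply invokes the standard construction of \cite[Chapter~4.2]{EK} for a bounded generator with nonnegative off-diagonal coefficients and zero row sums, together with $L^{(n)}\bone_{\cK^{(n)}}=0$; your decomposition $L^{(n)}=\mu(P^{(n)}-I)$ with $P^{(n)}$ a Markov operator is exactly the argument behind that citation, using \eqref{eq:cflgn} in the same two places (strict bound $|c^{(n)}(\xi,\eta)|<1$ and $\inf_\xi q^{(n)}(\xi)>0$). No gaps.
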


Define a c\`{a}dl\`{a}g process $Y^{(n)}=\{Y_{t}^{(n)}\}_{t \ge 0}$ on $\R^d$ by 
\[
Y_t^{(n)}=\cl{X_{t}^{(n)}},\quad t \ge 0.
\] 
In order to prove Theorem~\ref{thm:4} under the assumptions imposed there, we first obtain the weak convergence of $\{Y^{(n)}\}_{n=1}^\infty$.  We may assume that  $\bar{\xi} \in \cl{D}$  for any $\xi \in \cK^{(n)}$.  
Let $f \in C_{c,\text{Neu}}^2(\cl{D})$.  Since $\widetilde{\pi}_n f$ is a bounded function on $\cK^{(n)}$,  Lemma~\ref{lem:feller} implies that $\widetilde{\pi}_n f$ belongs to the domain of the generator associated with $\{p_{t}^{(n)}\}_{t>0}$.   The semigroup $\{p_t\}_{t>0}$ of the RBM $X$ is strongly continuous and contractive on $C_{\infty}(\cl{D})$.  Therefore,  under the assumption  that $ C_{c,\text{Neu}}^2(\cl{D})$ is  a core for $(\mathcal{L},\text{Dom}(\mathcal{L}))$,   \eqref{eq:cg'} and  \cite[Theorem~1.6.1]{EK} imply that  for any $t>0$ and $f \in C_{\infty}(\cl{D})$,
\begin{equation}\label{eq:cg''} 
\lim_{n \to \infty}\sup_{\xi \in \cK^{(n)}}\left| p_{t}^{(n)}(\widetilde{\pi}_n f)(\xi)- \widetilde{\pi}_n \left(p_tf\right)(\xi) \right|=0.
\end{equation}
From Lemma~\ref{lem:feller},  we have $P_{\xi}^{(n)}(X^{(n)}_t \in \cK^{(n)}\text{ for any $t \ge 0$})=1$ for any $\xi \in \cK^{(n)}$.
Since we have $\bar{\xi} \in \cl{D}$ for any $\xi \in \cK^{(n)}$,  the process $Y^{(n)}$ has sample paths in  $\mathcal{D}([0,\infty),\cl{D})$.   Here,   $\mathcal{D}([0,\infty),\cl{D})$ denotes the space   of right continuous functions on $[0,\infty)$ having left limits and  taking values in $\cl{D}$ that is equipped with the Skorohod topology.  
Let $x \in \cl{D}$,  and let $\xi^{(n)} \in \cK^{(n)}$ satisfy  $\lim_{n \to \infty}|\cl{\xi^{(n)}}-x|=0.$  Since $P_{\xi^{(n)}}^{(n)}(Y_0^{(n)}=\cl{\xi^{(n)}})=1$,  the initial distribution of $Y^{(n)}$ under $P_{\xi^{(n)}}^{(n)}$ converges weakly to the Dirac measure at $x$.  We can now use \eqref{eq:cg''} and \cite[Theorem~4.2.11]{EK} to obtain the following lemma.  

\begin{lemma}\label{lem:4}
Assume that all assumptions of Theorem~\ref{thm:4} are satisfied.  Let $x \in \cl{D}$,  and let $\xi^{(n)} \in \cK^{(n)}$($n=1,2,\ldots$) satisfy  $\lim_{n \to \infty}|\cl{\xi^{(n)}}-x|=0.$ Then, as $n \to \infty$,  the laws of  $\{(Y^{(n)},P_{\xi^{(n)}}^{(n)})\}_{n=1}^\infty$ converge weakly in $\mathcal{D}([0,\infty),\cl{D})$ to the law of the RBM on $\cl{D}$ starting from $x$.
\end{lemma}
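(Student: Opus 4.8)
The plan is to recognize Lemma~\ref{lem:4} as a direct instance of the general transfer theorem \cite[Theorem~4.2.11]{EK}, which converts convergence of the associated Markov semigroups, together with convergence of the initial laws, into weak convergence of the processes themselves in the Skorohod space. The point is that all of the genuine analytic work has already been absorbed into the semigroup convergence \eqref{eq:cg''}; what remains is to match our objects to the template of that theorem and to check its hypotheses. First I would fix the correspondence of state spaces: the limiting space is $\cl{D}$, the approximating spaces are the $\cK^{(n)}$ carrying the discrete topology, and the natural point map is $\eta_n\colon\cK^{(n)}\to\cl{D}$, $\eta_n(\xi)=\bar\xi$. This map is continuous (since $\cK^{(n)}$ is discrete) and takes values in $\cl{D}$ by condition~{\bf (B)}; with it one has $\widetilde\pi_n f=f\circ\eta_n$ for $f\in C(\cl{D})$ and $Y^{(n)}=\eta_n\circ X^{(n)}$, so that $Y^{(n)}$ is exactly the $\eta_n$-image of the chain $X^{(n)}$.

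Next I would verify the hypotheses of \cite[Theorem~4.2.11]{EK} in turn. Because $L^{(n)}$ is a bounded operator, $X^{(n)}$ is a continuous-time Markov chain with c\`adl\`ag paths in $\cK^{(n)}$, so by condition~{\bf (B)} its image $Y^{(n)}$ has c\`adl\`ag paths in $\cl{D}$; Lemma~\ref{lem:feller} gives that the transition semigroups $\{p_t^{(n)}\}_{t>0}$ are strongly continuous contractions, while the limiting RBM $X$ is a Feller process with strongly continuous contraction semigroup $\{p_t\}_{t>0}$ on $C_\infty(\cl{D})$. The semigroup-convergence hypothesis, $\lim_{n\to\infty}\sup_{\xi\in\cK^{(n)}}|p_t^{(n)}(\widetilde\pi_n f)(\xi)-\widetilde\pi_n(p_t f)(\xi)|=0$ for each $t>0$ and $f\in C_\infty(\cl{D})$, is precisely \eqref{eq:cg''}. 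Finally, for the initial data, under $P^{(n)}_{\xi^{(n)}}$ the chain starts deterministically at $\xi^{(n)}$, so $Y_0^{(n)}=\cl{\xi^{(n)}}$ almost surely; since $\cl{\xi^{(n)}}\to x$, the laws of $\eta_n(X_0^{(n)})=Y_0^{(n)}$ converge weakly to $\delta_x$, the law of $X_0$ under $P_x$. Feeding these ingredients into \cite[Theorem~4.2.11]{EK} then delivers the asserted weak convergence of $\{(Y^{(n)},P^{(n)}_{\xi^{(n)}})\}_{n=1}^\infty$ to the law of the RBM started at $x$ in $\mathcal{D}([0,\infty),\cl{D})$.

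I expect the main obstacle to lie not in this lemma but in the step already completed upstream, namely obtaining \eqref{eq:cg''} from the generator estimate \eqref{eq:cg'}, which needs the core property of $C_{c,\text{Neu}}^2(\cl{D})$ for $(\mathcal{L},\text{Dom}(\mathcal{L}))$ together with the Trotter--Kato theorem \cite[Theorem~1.6.1]{EK}. Within the present argument the only delicate points are bookkeeping ones that I would be careful to state cleanly: reconciling the domain $\mathcal{B}_b(\cK^{(n)})$ of the discrete semigroups with the space $C_\infty(\cl{D})$ on which the transfer theorem is phrased, and, when $D$ is unbounded, ensuring that the compact containment (tightness) input demanded by Skorohod-space convergence is supplied by the Feller property of the limiting RBM, which is exactly the structure that \cite[Theorem~4.2.11]{EK} is built to exploit.
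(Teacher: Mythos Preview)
Your proposal is correct and matches the paper's own argument essentially verbatim: the paper likewise checks that $Y^{(n)}$ takes values in $\cl{D}$ via condition~{\bf (B)} and the conservativity from Lemma~\ref{lem:feller}, observes the convergence of the initial laws $\delta_{\cl{\xi^{(n)}}}\to\delta_x$, and then invokes \eqref{eq:cg''} together with \cite[Theorem~4.2.11]{EK}. Your additional remark that \eqref{eq:cg''} itself comes from \eqref{eq:cg'} via the core hypothesis and \cite[Theorem~1.6.1]{EK} is exactly what the paper records just before stating the lemma.
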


Let $z \in \cl{D}$  and let $\xi \in \cK^{(n)}$.  Then,  we have 
\[
 \left|d_{\mathrm{H}}(\xi,z)-d_{\mathrm{H}}(\bar{\xi},z)\right|   \le d_{\mathrm{H}}(\bar{\xi},\xi).
\]
From the definition of the Hausdorff metric,  we have $d_{\mathrm{H}}(\bar{\xi},z)=|\bar{\xi}-z|$  and $d_{\mathrm{H}}(\bar{\xi},\xi)=\sup_{y \in \xi}|y-\bar{\xi}|\le \eps_n(\xi).$  Therefore,  we obtain that
\begin{equation}\label{eq:hd2}
 \left|d_{\mathrm{H}}(\xi,z)-|\bar{\xi}-z|\right|   \le \eps_n(\xi).
\end{equation}
This immediately implies the following lemma.

\begin{lemma}\label{lem:hd2}
Assume $\lim_{n \to \infty}\sup_{\xi \in \cK^{(n)}}\eps_n(\xi)=0.$  Let $x \in \cl{D}$,  and let $\xi^{(n)} \in \cK^{(n)}$($n=1,2,\ldots$) satisfy   $\lim_{n \to \infty}d_{\mathrm{H}}(\xi^{(n)},x)=0.$  Then,  we have $\lim_{n \to \infty}|\cl{\xi^{(n)}}-x|=0.$
\end{lemma}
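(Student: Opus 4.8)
The plan is to deduce the claim directly from the elementary estimate~\eqref{eq:hd2}, which has just been established. First I would apply \eqref{eq:hd2} with $z=x$ and $\xi=\xi^{(n)}$ (recall that $\cl{\xi^{(n)}}$ denotes the center of gravity of $\xi^{(n)}$), obtaining
\[
\bigl|\, d_{\mathrm{H}}(\xi^{(n)},x)-|\cl{\xi^{(n)}}-x|\,\bigr| \le \eps_n(\xi^{(n)}).
\]
Then I would bound $\eps_n(\xi^{(n)})$ by $\sup_{\xi\in\cK^{(n)}}\eps_n(\xi)$ and use the triangle-type inequality coming from the display above to get
\[
|\cl{\xi^{(n)}}-x|\le d_{\mathrm{H}}(\xi^{(n)},x)+\sup_{\xi\in\cK^{(n)}}\eps_n(\xi).
\]
Finally I would let $n\to\infty$: the first term on the right tends to $0$ by the hypothesis $\lim_{n\to\infty}d_{\mathrm{H}}(\xi^{(n)},x)=0$, and the second by the hypothesis $\lim_{n\to\infty}\sup_{\xi\in\cK^{(n)}}\eps_n(\xi)=0$, so $\lim_{n\to\infty}|\cl{\xi^{(n)}}-x|=0$, which is the assertion.

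I do not expect a genuine obstacle here, since \eqref{eq:hd2} is already available and the steps above constitute essentially the entire argument. The one conceptual point worth recording is that the center of gravity $\cl{\xi^{(n)}}$ need not lie in the cell $\xi^{(n)}$ (cells are not assumed convex), so one cannot conclude that $\cl{\xi^{(n)}}$ is close to $x$ directly from $\xi^{(n)}$ being Hausdorff-close to $x$; this gap is precisely what \eqref{eq:hd2} closes, via the uniform size control $\xi\subset\cl{B}(\bar\xi,\eps_n(\xi))$ from Lemma~\ref{lem:ed}(1). If \eqref{eq:hd2} were not at hand, I would first re-derive it: the triangle inequality for the Hausdorff metric gives $|d_{\mathrm{H}}(\xi,z)-d_{\mathrm{H}}(\bar\xi,z)|\le d_{\mathrm{H}}(\xi,\bar\xi)$, while $d_{\mathrm{H}}(\bar\xi,z)=|\bar\xi-z|$ and $d_{\mathrm{H}}(\xi,\bar\xi)=\sup_{y\in\xi}|y-\bar\xi|\le\eps_n(\xi)$ by Lemma~\ref{lem:ed}(1).
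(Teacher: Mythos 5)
Your argument is correct and is exactly the one the paper intends: it derives the lemma directly from the estimate \eqref{eq:hd2} applied with $z=x$ and $\xi=\xi^{(n)}$, bounding $\eps_n(\xi^{(n)})$ by the supremum and passing to the limit, which is why the paper states that \eqref{eq:hd2} ``immediately implies'' the lemma. Your remark about why the center of gravity need not lie in the cell, and how Lemma~\ref{lem:ed}(1) closes that gap, is a correct reading of the role of \eqref{eq:hd2}.
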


Let $d_{\mathrm{S}}$ denote the Skorohod metric on $\mathcal{D}([0,\infty),\cl{D})$.  From (5.2) in \cite[Chapter~3]{EK},  we have for any $\omega,\omega' \in \mathcal{D}([0,\infty),\cl{D})$,
\begin{equation}\label{eq:skm}
d_{\mathrm{S}}(\omega,\omega')\le \int_{0}^\infty e^{-u}\sup_{t \in [0,u]}\left\{d_{\mathrm{H}}(\omega_t,\omega'_t) \wedge 1 \right\}\,du.
\end{equation}
We are now ready to prove Theorem~\ref{thm:4}.

\begin{proof}[Proof of Theorem~\ref{thm:4}]
Since $\lim_{n \to \infty}\sup_{\xi \in \cK^{(n)}}\eps_n(\xi)/\rho_n(\xi)
=0$,  we have \[\lim_{n \to \infty}\sup_{\xi \in \cK^{(n)}}\eps_n(\xi)=0.\]   Thus, Lemmas~\ref{lem:4}  and \ref{lem:hd2} imply that as $n \to \infty$, 
the laws of  $\{(Y^{(n)},P_{\xi^{(n)}}^{(n)})\}_{n=1}^\infty$ converge weakly in $\mathcal{D}([0,\infty),\mathcal{X})$ to  the law of the RBM $(X,P_x)$ on $\cl{D}$ starting from $x$.   
Let  $F \colon \mathcal{D}([0,\infty),\mathcal{X}) \to \R$ be a bounded Lipschitz continuous function.  The Portmanteau theorem implies that
\begin{equation}\label{eq:mr1}
\lim_{n \to \infty}E^{(n)}_{\xi^{(n)}}[F(Y^{(n)})]=E_{x}[F(X)].
\end{equation}
Here,  $E^{(n)}_{\xi^{(n)}}$ and $E_x$ denote the expectations under $P^{(n)}_{\xi^{(n)}}$ and $P_x$,  respectively. From \eqref{eq:hd2} and \eqref{eq:skm},   we have for any $n \in \N$,
\[
d_{\mathrm{S}}(X^{(n)},Y^{(n)})\le \int_{0}^\infty e^{-u} \sup_{\xi \in \cK^{(n)}}\eps_n(\xi)\,du=\sup_{\xi \in \cK^{(n)}}\eps_n(\xi),\quad P_{\xi^{(n)}}^{(n)}\text{-a.s.}
\]
Therefore, 
\begin{equation}\label{eq:mr2}
\lim_{n \to \infty}E^{(n)}_{\xi^{(n)}}[F(X^{(n)})-F(Y^{(n)})]=0.
\end{equation}
It follows from \eqref{eq:mr1} and \eqref{eq:mr2} that $\lim_{n \to \infty}E^{(n)}_{\xi^{(n)}}[F(X^{(n)})]=E_{x}[F(X)]$.
\end{proof}

\section{concluding remarks}

We conclude the present paper with mentioning a few future problems.

\begin{itemize}
 \item In order to obtain Lemmas~\ref{thm:1} and \ref{thm:2}, we needed to assume \eqref{eq:A} and \eqref{eq:B} to assure that $\rho(\xi)$ was sufficiently larger than $\eps(\xi)$ for any $\xi \in \cK$. In other words, the convergence results (Theorems~\ref{thm:3} and \ref{thm:4}) are proved under the condition that the range where the continuous time Markov chains $X^{(n)}$ move from each point $\xi$ has to be relatively large. As discussed in Remark~\ref{rem:srw}, however, in typical situations where the whole space is approximated by lattices the convergence is proven for Markov chains moving only nearest neighborhoods. So, a natural problem is how small $\rho(\xi)$ can be taken for the convergence of the processes.

\item The continuous parameter Markov chain $X^{(n)}$ has a holding time whose law is the exponential distribution with parameter, say $\lambda_n(\xi)$, when moving from $\xi\in \cK^{(n)}$. We can define another Markov process $Z^{(n)}$ by replacing the holding time of $X^{(n)}$ with the deterministic holding time with length $\lambda_n(\xi)$ at each $\xi$. We can also consider a continuous process $W^{(n)}$ on $\cl{D}$ by the geodesic interpolation of $\overline{Z^{(n)}}$. We expect similar convergences for $Z^{(n)}$ and $W^{(n)}$, but the proof does not seem straightforward.
\item In the paper~\cite{BIK} by Burago, Ivanov, and Kurylev, it is shown that the eigenvalues and eigenfunctions of the Laplacian on a closed Riemannian manifold $M$ are approximated by those of the graph Laplacians on partitions of $M$. 
 In the framework of the present paper, we can consider a similar problem since  the Neumann Laplacian $\mathcal{L}$ has a  discrete spectrum when $D$ is a smooth bounded domain. That is, as $n \to \infty$, we are concerned with whether eigenfunctions and eigenvalues of $L^{(n)}$ converge to those of  $\mathcal{L}$. A study by Ba\~{n}uelos and Pang~\cite{BP} also discussed approximating eigenvalues and eigenfunctions of  $\mathcal{L}$, where a weak convergence result to RBM obtained by Burdzy and Chen~\cite{BC3} was utilized. See \cite[Proposition~2.1]{BP} for details. The method of Ba\~{n}uelos and Pang~\cite{BP} may be helpful in solving this problem.

\end{itemize}

\begin{appendix}
\section{}

For $r>0$,  we define 
\[
B_{+}(0,r)=\{x \in B(0,r) \mid x_d>0\}.
\]
Recall that $\mathrm{B}(\cdot,\cdot)$ denotes the beta function and $\beta_d$ is defined as 
 \[\beta_d=\frac{2}{(d+1)\mathrm{B}(\frac{1}{2},\frac{d+1}{2})}.\]
\begin{lemma}\label{lem:uph}
For $r>0$, we have
\[
\frac{1}{r}\fint_{B_{+}(0,r)} y\,m(dy)=\beta_d e_d,
\]
where $e_d=(0,\ldots,0,1)$.  
\end{lemma}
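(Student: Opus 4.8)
The plan is to compute the vector-valued integral $\fint_{B_+(0,r)} y\,m(dy)$ directly. By symmetry of $B_+(0,r)$ under reflections in the first $d-1$ coordinates, every component except the last vanishes, so it suffices to evaluate the $d$-th component, i.e. $\fint_{B_+(0,r)} y_d\,m(dy)$. I would first reduce to the case $r=1$: the substitution $y = rz$ gives $\int_{B_+(0,r)} y_d\,m(dy) = r^{d+1}\int_{B_+(0,1)} z_d\,m(dz)$ and $m(B_+(0,r)) = r^d m(B_+(0,1))$, so $\frac{1}{r}\fint_{B_+(0,r)} y\,m(dy) = \fint_{B_+(0,1)} z\,m(dz)$, and the claimed identity is scale-invariant. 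Hence I only need to show $\fint_{B_+(0,1)} z_d\,m(dz) = \beta_d$.

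Next I would slice $B_+(0,1)$ by the hyperplanes $\{z_d = t\}$ for $t\in(0,1)$. The slice at height $t$ is the $(d-1)$-dimensional ball of radius $\sqrt{1-t^2}$, with $(d-1)$-measure $\omega_{d-1}(1-t^2)^{(d-1)/2}$. Therefore
\begin{align*}
\int_{B_+(0,1)} z_d\,m(dz) &= \omega_{d-1}\int_0^1 t\,(1-t^2)^{(d-1)/2}\,dt,\\
m(B_+(0,1)) &= \omega_{d-1}\int_0^1 (1-t^2)^{(d-1)/2}\,dt = \frac{\omega_d}{2}.
\end{align*}
The first integral is elementary: with $s = 1-t^2$ it equals $\frac{1}{2}\int_0^1 s^{(d-1)/2}\,ds = \frac{1}{d+1}$. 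So $\fint_{B_+(0,1)} z_d\,m(dz) = \frac{2\omega_{d-1}}{(d+1)\omega_d}$.

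Finally I would identify this constant with $\beta_d = \frac{2}{(d+1)\mathrm{B}(\frac12,\frac{d+1}{2})}$; equivalently, I must check $\omega_{d-1}/\omega_d = 1/\mathrm{B}(\frac12,\frac{d+1}{2})$. This follows from the substitution $t = \sin\theta$ in $\int_0^1(1-t^2)^{(d-1)/2}\,dt = \int_0^{\pi/2}\cos^d\theta\,d\theta = \frac{1}{2}\mathrm{B}(\frac12,\frac{d+1}{2})$, combined with the value $\frac{\omega_d}{2} = \omega_{d-1}\int_0^1(1-t^2)^{(d-1)/2}\,dt$ already obtained; hence $\omega_d = \omega_{d-1}\,\mathrm{B}(\frac12,\frac{d+1}{2})$, which gives the claim. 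None of the steps is a genuine obstacle; the only mild care needed is bookkeeping the Beta-function normalization and confirming the stated footnote identity $\beta_d = \frac{2\omega_{d-1}}{(d+1)\omega_d}$, which is exactly what the computation above delivers.
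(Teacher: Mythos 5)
Your proof is correct and follows essentially the same route as the paper: symmetry kills the first $d-1$ components, slicing by $\{y_d=s\}$ gives $\frac{1}{r}\fint_{B_+(0,r)}y_d\,m(dy)=\frac{2\omega_{d-1}}{(d+1)\omega_d}$, and this is identified with $\beta_d$. The only (harmless) deviation is in the last step: the paper verifies $\omega_{d-1}/\omega_d=1/\mathrm{B}(\tfrac12,\tfrac{d+1}{2})$ via the Gamma-function formulas $\omega_d=\pi^{d/2}/\Gamma(\tfrac{d}{2}+1)$ and $\mathrm{B}(a,b)=\Gamma(a)\Gamma(b)/\Gamma(a+b)$, whereas you obtain it self-containedly from $\tfrac{\omega_d}{2}=\omega_{d-1}\int_0^1(1-t^2)^{(d-1)/2}\,dt$ and the substitution $t=\sin\theta$, which is equally valid.
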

\begin{proof}
If $d=1$,  we have $\mathrm{B}(\frac{1}{2},\frac{d+1}{2})=2$ and the conclusion immediately follows. Next,  we assume $d \ge 2.$ By symmetry, for any integer $i$ with $1\le i\le d-1$, we have
\[
\int_{B_{+}(0,r)} y_i\,m(dy)=0.\]
For $s \in \R$,  we let $
B^{(d),s}(0,r)=B(0,r) \cap \{x \in \R^d \mid x_d=s\}$.
 Let $\sigma$ denote the $(d-1)$-dimensional Hausdorff measure on $\R^d.$ Then,  we obtain that
\begin{align*}
\int_{B_{+}(0,r)} y_d\,m(dy)&=\int_{0}^r s \sigma(B^{(d),s}(0,r))\,ds =\omega_{d-1}\int_{0}^{r} s(r^2-s^2)^{(d-1)/2}\,ds\\
&=\omega_{d-1} \left[ -\frac{1}{d+1}(r^2-s^2)^{(d+1)/2}\right]_{0}^r=\frac{\omega_{d-1} r^{d+1}}{d+1}.
\end{align*}
Since $m(B_{+}(0,r))=\omega_d r^{d}/2$, 
\begin{equation}\label{eq:uph0}
\frac{1}{r}\fint_{B_{+}(0,r)} y_d\,m(dy)=\frac{2\omega_{d-1}}{(d+1)\omega_d }e_d.
\end{equation}
It is known that
\begin{equation}\label{eq:gamma}
\omega_d=\frac{\pi^{d/2}}{\mathrm{\Gamma} \left(\frac{d}{2}+1\right)},
\end{equation}
where $\mathrm{\Gamma}$ denotes the gamma function.  It is also known that
\begin{equation}\label{eq:beta2}
\textrm{B}(a,b)=\frac{\mathrm{\Gamma}(a)\mathrm{\Gamma}(b)}{\mathrm{\Gamma}(a+b)}
\end{equation}
for any $a>0$ and $b>0$.   We see from \eqref{eq:gamma} and \eqref{eq:beta2} that
\begin{align}
\frac{\omega_{d-1}}{\omega_d }=\frac{  \pi^{(d-1)/2} }{ \mathrm{\Gamma}\left(\frac{d-1}{2}+1\right)} \frac{\mathrm{\Gamma}\left(\frac{d}{2}+1\right)}{ \pi^{d/2}}&=\frac{\mathrm{\Gamma}(\frac{d+2}{2})}{\mathrm{\Gamma}(\frac{d+1}{2})\mathrm{\Gamma}(\frac{1}{2})}= \frac{1}{\mathrm{B}(\frac{1}{2},\frac{d+1}{2})}. \label{eq:uph1}
\end{align}
In the second equation,  we used the fact that $\mathrm{\Gamma}(1/2)=\pi^{1/2}$.   Substituting \eqref{eq:uph1} into \eqref{eq:uph0} completes the proof.
\end{proof}

\begin{lemma}\label{lem:uph2}
For $r>0$,  we have
\[
\fint_{B_{+}(0,r)} y\otimes y\,m(dy)=\frac{r^2}{d+2}I_d.
\]
\end{lemma}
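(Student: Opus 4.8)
The plan is to compute the entries of the symmetric matrix $\fint_{B_{+}(0,r)}y\otimes y\,m(dy)$ one at a time, exploiting the reflection symmetries of the half-ball $B_{+}(0,r)$ together with the already-established identity \eqref{eq:a0}.

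First I would dispose of the off-diagonal entries. Fix indices $i\neq j$; then at least one of them, say $i$, satisfies $i\neq d$. The reflection $T\colon\R^d\to\R^d$ sending $y_i\mapsto-y_i$ and fixing all other coordinates leaves $B_{+}(0,r)$ invariant (it does not touch $y_d$) and has Jacobian determinant $\pm1$, whereas $(Ty)_i(Ty)_j=-y_iy_j$. By the change of variables $y\mapsto Ty$ we get $\int_{B_{+}(0,r)}y_iy_j\,m(dy)=-\int_{B_{+}(0,r)}y_iy_j\,m(dy)$, so this integral vanishes; hence the $(i,j)$-entry of the matrix is $0$ for every $i\neq j$.

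Next I would treat the diagonal entries. For each index $i$ the function $y\mapsto y_i^2$ is invariant under the reflection $R\colon y\mapsto(y_1,\dots,y_{d-1},-y_d)$, and $R$ maps $B_{+}(0,r)$ onto $\{y\in B(0,r)\mid y_d<0\}$; since $m(\{y\in B(0,r)\mid y_d=0\})=0$, it follows that $\int_{B_{+}(0,r)}y_i^2\,m(dy)=\tfrac12\int_{B(0,r)}y_i^2\,m(dy)$. By the rotational symmetry of $B(0,r)$ the integrals $\int_{B(0,r)}y_i^2\,m(dy)$ ($i=1,\dots,d$) are all equal, and their sum equals $\int_{B(0,r)}|y|^2\,m(dy)$; combining this with \eqref{eq:a0} gives
\[
\int_{B(0,r)}y_i^2\,m(dy)=\frac1d\int_{B(0,r)}|y|^2\,m(dy)=\frac1d\cdot\frac{dr^2}{d+2}\cdot m(B(0,r))=\frac{\omega_d r^{d+2}}{d+2}.
\]
Hence $\int_{B_{+}(0,r)}y_i^2\,m(dy)=\omega_d r^{d+2}/(2(d+2))$, and dividing by $m(B_{+}(0,r))=\omega_d r^{d}/2$ yields $\fint_{B_{+}(0,r)}y_i^2\,m(dy)=r^2/(d+2)$ for every $i$.

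Putting the two computations together, the matrix $\fint_{B_{+}(0,r)}y\otimes y\,m(dy)$ is diagonal with all diagonal entries equal to $r^2/(d+2)$, i.e.\ it equals $\frac{r^2}{d+2}I_d$, as claimed. I do not expect any genuine obstacle here; the only point requiring a little care is that the reflection used for the off-diagonal entries must not flip the $d$-th coordinate, which is precisely why one uses that, for $i\neq j$, some index other than $d$ is always available.
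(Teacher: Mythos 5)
Your proof is correct and follows essentially the same route as the paper: the off-diagonal entries vanish because for $i\neq j$ one can always choose an index different from $d$ and exploit the symmetry of $B_{+}(0,r)$ in that coordinate (the paper writes this as an iterated integral with a symmetric inner interval rather than a reflection, but the idea is identical), and the diagonal entries are computed by halving the full-ball integral and invoking \eqref{eq:a0}. No further comment is needed.
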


\begin{proof}
Let $i,j$ be integers such that $1 \le i<j \le d$.   
Let 
\[B^{(i)}_{+}=\{ y^{(i)} \in \R^{d-1} \mid |y^{(i)}|<r,\,y_d \ge 0\},\]
where   $y^{(i)}=(y_1,\ldots,y_{i-1},y_{i+1},\ldots, y_d)$ and $|y^{(i)}|$ denotes the $(d-1)$-dimensional Euclidean  norm of $y^{(i)}$.  We have 
\begin{equation}\label{eq:uph2}
\int_{B_{+}(0,r)} y_i y_j\,m(dy)=\int_{B^{(i)}_{+}} \left( \int_{-\sqrt{r^2-|y^{(i)}|^2}}^{\sqrt{r^2-|y^{(i)}|^2}} y_i \,dy_i\right) y_j \,dy^{(i)}=0.
\end{equation}
Letting $i$ be an integer satisfying $1 \le  i \le d$,  we have  
\[
\int_{B_{+}(0,r)} y_i^2\,m(dy)=\frac{1}{2}\int_{B(r)} y_i^2\,m(dy).
\]
We use this and  \eqref{eq:a0} to obtain the following:
\begin{equation}\label{eq:uph3}
\fint_{B_{+}(0,r)} y_i^2\,m(dy)=\fint_{B(r)} y_i^2\,m(dy)=\frac{1}{d}\fint_{B(r)} |y|^2\,m(dy)=\frac{r^2}{d+2}.
\end{equation}
The conclusion follows from \eqref{eq:uph2} and \eqref{eq:uph3}.
\end{proof}

Using Lemma~\ref{lem:uph},  we also get the following lemma.
\begin{lemma}\label{lem:ub}
Let $D \subset \R^d$ be a  $C^{1,\alpha}$-domain for some $\alpha \in (0,1].$  
 Then, there exists $C>0$ depending only on  $D$ such that for any $x \in \partial D$ and $r \in (0,R\wedge 1),$
\[
 m\left( B_D (x,r)  \bigtriangleup  B_{+}(x,r) \right) \le Cr^{d+\alpha},
\]
where  $R$ is the positive constant in Definition~\ref{defn:Dc1a},   $B_D (x,r)=B(x,r)\cap D$,
\[
B_{+}(x,r)=\{y=(y',y_d) \in B(0,r) \text{ in $CS_x$} \mid y_d>0\},\]
and  $CS_x$ is an orthonormal coordinate system in \eqref{eq:Dcs}.   Moreover,   
there exists $C>0$ depending only on  $D$ such that for any $r \in (0,R\wedge 1),$
\begin{equation*}
\sup_{x \in \partial D}\left|\fint_{ B_D (x,r)} \frac{y-x}{r}\,m(dy) -\beta_d\nu(x) \right| \le Cr^\alpha.
\end{equation*}
Here,  $\nu(x)$ is the inward unit normal vector at $x$.
\end{lemma}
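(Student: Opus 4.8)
The plan is to fix $x\in\partial D$ and carry out the whole computation in the orthonormal coordinate system $CS_x$ of Definition~\ref{defn:Dc1a}, in which $x$ is the origin, the hyperplane $\{y_d=0\}$ is tangent to $\partial D$ at $x$, and $B(x,r)\cap D=\{y=(y',y_d)\in B(0,r)\mid y_d>F_x(y')\}$ for every $r\in(0,R)$. Since $\nabla F_x(0)=0$, the inward unit normal satisfies $\nu(x)=e_d=(0,\dots,0,1)$ in $CS_x$, and the norm appearing in the second assertion is unchanged under this orthonormal change of variables. The one analytic ingredient I would extract from the $C^{1,\alpha}$-hypothesis is the pointwise bound
\[
|F_x(y')|=\Bigl|\int_0^1\langle\nabla F_x(ty')-\nabla F_x(0),\,y'\rangle\,dt\Bigr|\le C|y'|^{1+\alpha},
\]
valid for all $y'$ with a constant $C$ depending only on $D$ (it is the Hölder constant of $\nabla F_x$ up to a factor), and which is uniform in $x$ by Definition~\ref{defn:Dc1a}.

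For the first estimate I would slice in the $y_d$-direction. For fixed $y'$ with $|y'|<r$, a point $(y',y_d)\in B(0,r)$ lies in exactly one of $B_D(x,r)$, $B_+(x,r)$ precisely when $y_d$ lies strictly between $0$ and $F_x(y')$, so the $y_d$-section of $B_D(x,r)\bigtriangleup B_+(x,r)$ has length at most $|F_x(y')|$. Integrating and using the pointwise bound,
\[
m\bigl(B_D(x,r)\bigtriangleup B_+(x,r)\bigr)\le\int_{\{|y'|<r\}}|F_x(y')|\,dy'\le C r^{1+\alpha}\,\omega_{d-1}r^{d-1}=C r^{d+\alpha},
\]
which is the first claim (the case $d=1$ being trivial, since then $F_x\equiv0$ and the two sets coincide).

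For the second estimate I would reuse the splitting employed in the proof of Lemma~\ref{lemma:span}. Writing $B_D=B_D(x,r)$, $B_+=B_+(x,r)$, and using $\fint_{B_+}(y/r)\,m(dy)=\beta_d e_d$ from Lemma~\ref{lem:uph},
\[
\fint_{B_D}\frac{y}{r}\,m(dy)-\beta_d e_d=\frac{1}{r}\Bigl(\frac{1}{m(B_D)}-\frac{1}{m(B_+)}\Bigr)\int_{B_D}y\,m(dy)+\frac{1}{r\,m(B_+)}\Bigl(\int_{B_D}y\,m(dy)-\int_{B_+}y\,m(dy)\Bigr).
\]
Since $r\in(0,R\wedge1)$, estimate \eqref{eq:D} gives $m(B_D)\ge C_D\omega_d r^d$, while $m(B_+)=\omega_d r^d/2$, $|\int_{B_D}y\,m(dy)|\le r\,m(B_D)$, $|\int_{B_D}y\,m(dy)-\int_{B_+}y\,m(dy)|\le r\,m(B_D\bigtriangleup B_+)$, and $|m(B_D)-m(B_+)|\le m(B_D\bigtriangleup B_+)$. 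Substituting the bound $m(B_D\bigtriangleup B_+)\le C r^{d+\alpha}$ from the first part then controls both terms by $C r^\alpha$; since $\nu(x)=e_d$ in $CS_x$ and every constant is uniform in $x$, this gives the second claim with a uniform $C$.

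I expect no serious obstacle: once the quadratic-type control $|F_x(y')|\le C|y'|^{1+\alpha}$ on the boundary graph is in hand, both estimates are routine Fubini and comparison arguments. The only point requiring attention is to keep track that the constants involved — the Hölder modulus of $\nabla F_x$ in Definition~\ref{defn:Dc1a} and the constant in \eqref{eq:D} — are uniform over $x\in\partial D$, so that the final bounds hold uniformly as stated.
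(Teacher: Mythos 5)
Your proposal is correct and follows essentially the same route as the paper: the first bound comes from confining $B_D(x,r)\bigtriangleup B_+(x,r)$ to a thin neighborhood of the graph of $F_x$ via the $C^{1,\alpha}$ estimate $|F_x(y')|\le C|y'|^{1+\alpha}$ (the paper uses the uniform slab bound $|F_x(y')|\le Cr^{1+\alpha}$ rather than your Fubini slicing, but this is cosmetic), and the second follows from the same add-and-subtract decomposition of the two averages combined with Lemma~\ref{lem:uph}, the symmetric-difference bound, and the volume lower bound \eqref{eq:D}. The uniformity in $x$ is handled exactly as in the paper, through the uniform constants of Definition~\ref{defn:Dc1a}.
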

\begin{proof}
Let $x \in \partial D$ and $r\in (0,R  \wedge 1)$.   It then follows that
\begin{align} \label{eq:723}
&B_D (x,r)  \bigtriangleup  B_{+}(x,r)   \\
&\subset \left\{y=(y',y_d) \in B(0,r) \text{ in $CS_x$} \relmiddle| |y_d| \le  \sup_{y \in B(0,r)}|F_{x}(y')| \right\}. \notag
\end{align}
Here,  $F_x\colon \R^{d-1} \to \R$ is the $C^{1,\alpha}$-function in Definition~\ref{defn:Dc1a}.
The mean value theorem implies that  for any $y \in B(0,r)$ in $CS_x$,
 \begin{align}\label{eq:724}
|F_{x}(y')|&=|F_{x}(y')-F_{x}(0)| \\
&\le r \sup_{y' \in \R^{d-1},\,|y|\le r}|\nabla F_{x}(y')-\nabla F_{x}(0)| \notag  \\
&\le Cr^{\alpha+1}. \notag
 \end{align}
 Here,  $C$ is a positive constant in  Definition~\ref{defn:Dc1a}. 
From \eqref{eq:723} and \eqref{eq:724},  there exists $C'>0$ depending only on $D$ such that 
\begin{equation}\label{eq:725}
 m\left( B_D (x,r)  \bigtriangleup  B_{+}(x,r) \right) \le C'r^{d+\alpha}.
\end{equation}
It follows from Lemma~\ref{lem:uph} that
\begin{align*}
&\fint_{ B_{D}(x,r)} \frac{y-x}{r}\,m(dy) -\beta_d\nu(x) \notag \\
&=\fint_{ B_{D}(x,r)} \frac{y-x}{r}\,m(dy) -\fint_{ B_{+}(x,r)} \frac{y-x}{r}\,m(dy)  \\
&=\frac{1}{m(B_{D}(x,r))}\left(\int_{ B_{D}(x,r)} \frac{y-x}{r}\,m(dy)-\int_{ B_{+}(x,r)} \frac{y-x}{r}\,m(dy)  \right)\\
&\quad+\left(\frac{1}{m(B_{D}(x,r))}-\frac{1}{m(B_{+}(x,r))} \right)\int_{ B_{+}(x,r)} \frac{y-x}{r}\,m(dy).
\end{align*}
This implies that
\[
\left|\fint_{ B_{D}(x,r)} \frac{y-x}{r}\,m(dy) -\beta_d\nu(x)  \right| \le \frac{2m\left( B_D (x,r)  \bigtriangleup  B_{+}(x,r) \right) }{m(B_{D}(x,r))} .
\]
Applying \eqref{eq:D} and \eqref{eq:725} to the right-hand side of the above inequality,  we obtain the conclusion.
\end{proof}

\section{}
In this appendix, we prove Proposition~\ref{prop:core}.  
Henceforth,  we assume that $D$ is a bounded $C^{1,\alpha}$-domain for some $\alpha \in (0,1].$   For $k \in \{1,2\}$,  we denote by $H^{k}(D)$ the $k$-th order $L^2$-Sobolev space on $D$.   That is,  $H^{k}(D)$ is defined as
\begin{align*}
H^{k}(D)=\left\{f \in L^2(D,m)  \relmiddle|\parbox{0.54\textwidth}{For any $\alpha=(\alpha_1,\ldots,\alpha_d) \in (\mathbb{N}\cup \{0\})^d$ with $|\alpha|\le k$, the weak derivative $D^\alpha f$ exists and belongs to $L^2(D,m)$}\right\}.
\end{align*}
Here,  $|\alpha|=\alpha_1+\alpha_2+\cdots+\alpha_d$ and 
\[
D^\alpha f(x)=\frac{\partial^{|\alpha|} f}{\partial x_1^{\alpha_1} \partial x_2^{\alpha_2} \cdots \partial x_d^{\alpha_d}}(x),\quad x \in D.
\]
For $f\in H^1(D)$,  we define $\nabla f=(\partial f/\partial x_1,\ldots, \partial f/\partial x_d ).$  For $f,g \in H^1(D)$,  we define $\mathcal{E}(f,g)$ by \[\mathcal{E}(f,g)=\frac{1}{2}\int_{D}\langle \nabla f(x),\nabla g(x) \rangle\,m(dx).\]
For $\beta \ge 0$,  we let \[\mathcal{E}_\beta (f,g)=\mathcal{E}(f,g)+\beta \int_{D}f(x)g(x)\,m(dx).\]

Let $X=(\{X_t\}_{t \ge 0},\{P_x\}_{x \in \cl{D}})$ be the  RBM on $\cl{D}.$ As explained in Section~2,  $X$ can be constructed as a Feller process on $\cl{D}$.   Moreover,  $X$ is an $m$-symmetric diffusion process on $\cl{D}$,   and the Dirichlet form is identified with $(\mathcal{E},H^{1}(D))$   (see,  e.g., \cite[Theorem~2.2]{FT}).  Recall that $(\mathcal{L},\text{Dom}(\mathcal{L}))$ is  the generator of the Feller process $X$ on $\cl{D}$.  Let $\{G_\beta\}_{\beta>0 }$ denote the resolvent of $(\mathcal{L},\text{\rm Dom}(\mathcal{L}))$.  From the $m$-symmetry of $X$,  the resolvent  $\{G_\beta\}_{\beta>0 }$ is extended to a strongly continuous contraction resolvent on $L^q(\cl{D},m)$ for any $q \in [1,\infty)$.  Moreover,  for any $f \in L^2(D,m)$ and $\beta >0$, we have $G_\beta f \in H^1(D)$ and 
\begin{equation}\label{eq:weak}
\mathcal{E}_{\beta}(G_{\beta}f,g)=\int_{D}f(x)g(x)\,m(dx),\quad  g \in H^1(D).
\end{equation}
From \eqref{eq:weak}  and \cite[Corollary~8.11]{GT},  we obtain the following lemma.
\begin{lemma}\label{lem:intr}
For any $f \in C^{\infty}_c(\R^d)|_{\cl{D}}$ and $\beta >0$,  we have $G_\beta f \in C^\infty(D)$. 
\end{lemma}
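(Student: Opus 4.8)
The plan is to identify $u:=G_\beta f$ as a weak solution of an interior elliptic equation with smooth right-hand side and smooth coefficients, and then quote interior elliptic regularity.

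First I would use \eqref{eq:weak}, which asserts that $u=G_\beta f\in H^1(D)$ and $\mathcal{E}_\beta(u,g)=\int_D fg\,m(dx)$ for every $g\in H^1(D)$. Restricting the test functions to $g\in C^\infty_c(D)$ and spelling out $\mathcal{E}_\beta$, this is precisely the statement that $u$ is a distributional solution on the open set $D$ of
\[
\tfrac12\Delta u-\beta u=-f .
\]
Since $f$ is the restriction to $\cl{D}$ of an element of $C^\infty_c(\R^d)$, its restriction to the interior $D$ lies in $C^\infty(D)$; and the operator $\tfrac12\Delta-\beta$ is uniformly elliptic with constant, hence $C^\infty(D)$, coefficients. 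At this point I would invoke the interior regularity theorem for such equations, namely \cite[Corollary~8.11]{GT}, to conclude $u\in C^\infty(D)$. Concretely this is the usual bootstrap: from $u\in H^1_{\mathrm{loc}}(D)$ and $\Delta u=2(\beta u-f)\in L^2_{\mathrm{loc}}(D)$ one obtains $u\in H^2_{\mathrm{loc}}(D)$ by the interior $L^2$-estimate; then $\Delta u\in H^1_{\mathrm{loc}}(D)$ gives $u\in H^3_{\mathrm{loc}}(D)$, and iterating yields $u\in H^k_{\mathrm{loc}}(D)$ for every $k$, so $u\in C^\infty(D)$ by Sobolev embedding.

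There is essentially no serious obstacle here. The only points worth flagging are: (i) we claim smoothness only in the interior $D$, so neither the Neumann condition implicit in testing against all of $H^1(D)$ (rather than $H^1_0(D)$) nor the mere $C^{1,\alpha}$-regularity of $\partial D$ plays any role, and no boundary estimate is needed at this stage; and (ii) the one input beyond standard PDE theory is \eqref{eq:weak} itself, that is, the identification of the $L^2$-resolvent of the Feller generator $\mathcal{L}$ with the resolvent of the Dirichlet form $(\mathcal{E},H^1(D))$ of the RBM, which was recorded just above via \cite[Theorem~2.2]{FT}.
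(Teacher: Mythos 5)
Your proposal is correct and follows exactly the paper's route: the paper derives the lemma directly from \eqref{eq:weak} together with \cite[Corollary~8.11]{GT}, i.e.\ by viewing $G_\beta f$ as a weak solution of $\tfrac12\Delta u-\beta u=-f$ in $D$ and applying interior elliptic regularity. You have merely spelled out the bootstrap that the cited corollary encapsulates.
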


For $k \in \{1,2\}$,  let $C^{k}(\cl{D})$ denote the set of functions in $C^{k}(D)$ all of whose derivatives (of order $\le k$)  extend to $\cl{D}$ continuously.
Since  $D$ is a bounded $C^{1,\alpha}$-domain for some $\alpha \in (0,1],$ the boundary $\partial D$  is $C^{1,\text{Dini}^2}.$ In particular,  $\partial D$ is $C^{1,\text{Dini}}$ (see \cite[Definitions~2.5]{DLK} for the definitions of the $C^{1,\text{Dini}}$  and $C^{1,\text{Dini}^2}$ boundaries).  Thus,  we conclude from \cite[Theorem~1.2]{DLK} that for any $\beta >0$ and $f \in L^q({D},m)$ with $q>d$,  there is an $m$-version of $G_\beta f$ that belongs to $C^1(\cl{D}).$
The version is also denoted by the same symbol.  The Gauss--Green formula and \eqref{eq:weak} imply that $G_\beta f $ satisfies the Neumann boundary condition on $\partial D$.  That is,  $\langle \nabla G_\beta f,\nu \rangle=0 $ on $\partial D.$  From this fact and Lemma~\ref{lem:intr},  we arrive at the following lemma.

\begin{lemma}\label{lem:cls}
For any $f \in C^{\infty}_c(\R^d)|_{\cl{D}}$ and $\beta >0$,  we have $G_\beta f \in C^\infty(D) \cap C^1(\cl{D})$.  Furthermore, $
\beta G_\beta f-(\Delta/2) G_\beta f=f$ on $D$  and   $\langle \nabla G_\beta f,\nu  \rangle=0 $ on  $ \partial D.$
\end{lemma}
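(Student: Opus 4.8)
The plan is to assemble the three assertions of the lemma from the weak identity \eqref{eq:weak}, together with the regularity already recorded above, handling the interior equation and the boundary condition separately. First I would fix the representative of $G_\beta f$. Since $D$ is bounded, $f \in L^q(D,m)$ for every $q \in [1,\infty)$; Lemma~\ref{lem:intr} gives $G_\beta f \in C^\infty(D)$, while \cite[Theorem~1.2]{DLK} applied with some $q>d$ furnishes an $m$-version of $G_\beta f$ lying in $C^1(\cl{D})$. Because two continuous functions on $D$ that agree $m$-a.e.\ agree everywhere, these two versions coincide on $D$, so the single representative $G_\beta f$ belongs to $C^\infty(D)\cap C^1(\cl{D})$, which is the first claim.

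Next I would extract the interior equation. Testing \eqref{eq:weak} against $g \in C_c^\infty(D) \subset H^1(D)$ and integrating by parts on the compact support of $g$ (so that no boundary term appears, and where $G_\beta f$ is smooth) reduces \eqref{eq:weak} to
\[
\int_D \Bigl( \beta\, G_\beta f - \tfrac{1}{2}\Delta G_\beta f - f \Bigr) g \, m(dx) = 0 .
\]
As this holds for all such $g$ and the coefficient of $g$ is continuous on $D$, I obtain $\beta G_\beta f - (\Delta/2) G_\beta f = f$ pointwise on $D$. A useful by-product is that $\Delta G_\beta f = 2\beta G_\beta f - 2f$ then extends continuously to $\cl{D}$, since $G_\beta f \in C^1(\cl{D})$ and $f$ is smooth up to the boundary.

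For the Neumann condition I would test \eqref{eq:weak} against $g \in C^1(\cl{D})$ and apply the Gauss--Green formula to $G_\beta f$. Using the interior equation, this rewrites $\mathcal{E}_\beta(G_\beta f,g)$ as $\int_D fg\,m(dx)$ minus the boundary term $\tfrac{1}{2}\int_{\partial D}\langle \nabla G_\beta f,\nu\rangle\, g\,d\sigma$ (with $\sigma$ the surface measure and $\nu$ the inward normal). Comparing with \eqref{eq:weak} forces $\int_{\partial D}\langle \nabla G_\beta f,\nu\rangle\, g\,d\sigma = 0$ for every such $g$. Since $\langle \nabla G_\beta f,\nu\rangle$ is continuous on $\partial D$ (as $G_\beta f \in C^1(\cl{D})$ and $D$ is $C^1$) and the traces $g|_{\partial D}$ are dense in $C(\partial D)$, I would conclude $\langle \nabla G_\beta f,\nu\rangle = 0$ on $\partial D$, completing the proof.

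The hard part will be justifying the Gauss--Green step, because $G_\beta f$ is a priori only $C^1$ up to $\partial D$ rather than $C^2$. I would handle this by integrating Green's first identity over the interior subdomains $D_\varepsilon = \{x \in D \mid \mathrm{dist}(x,\partial D) > \varepsilon\}$, where $G_\beta f$ is smooth so the identity is classical, and then letting $\varepsilon \to 0$: the surface integrals over $\partial D_\varepsilon$ converge to the corresponding ones over $\partial D$ because $\nabla G_\beta f$ and $g$ extend continuously to $\cl{D}$, and the volume integral converges because $\Delta G_\beta f$ extends continuously to $\cl{D}$ by the interior equation. This limiting argument, which rests on the sharp boundary regularity from \cite{DLK}, is the only nonroutine ingredient.
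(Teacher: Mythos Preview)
Your proposal is correct and follows essentially the same route as the paper: combine Lemma~\ref{lem:intr} with \cite[Theorem~1.2]{DLK} for the regularity, read off the interior equation from \eqref{eq:weak}, and deduce the Neumann condition via the Gauss--Green formula. The paper's argument is the short paragraph immediately preceding the lemma and simply invokes Gauss--Green without further comment; your careful justification of that step via exhaustion by the subdomains $D_\varepsilon$ (using that $\Delta G_\beta f$ extends continuously to $\cl{D}$ from the interior equation) fills in exactly the detail the paper leaves implicit.
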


\cite[Theorems~2.2.2.5 and 3.2.1.3]{GP} lead us to the following lemma.

\begin{lemma}\label{lem:h2}
If  $D$ is a bounded $C^{1,1}$-domain or a bounded convex $C^{1,\alpha}$-domain for some $\alpha \in (0,1]$,  then $G_\beta f \in H^2(D)$ for any $f \in L^2(D,m)$ and $\beta >0$. 
\end{lemma}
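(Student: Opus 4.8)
The plan is to identify $G_\beta f$ with the variational solution of the homogeneous Neumann problem for the operator $-\tfrac12\Delta+\beta$ and then to invoke the quoted regularity theorems of Grisvard. Fix $f\in L^2(D,m)$ and $\beta>0$, and set $u=G_\beta f$. By \eqref{eq:weak}, $u\in H^1(D)$ satisfies $\mathcal{E}_\beta(u,g)=\int_D f g\,dm$ for all $g\in H^1(D)$, which, recalling the definition of $\mathcal{E}_\beta$, reads
\[
\frac12\int_D \langle \nabla u(x),\nabla g(x)\rangle\,m(dx)=\int_D \bigl(f(x)-\beta u(x)\bigr)g(x)\,m(dx),\qquad g\in H^1(D).
\]
This is precisely the weak formulation of the Neumann problem $-\tfrac12\Delta u=f-\beta u$ in $D$ with $\partial u/\partial\nu=0$ on $\partial D$.

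Next I would recast this as a Poisson--Neumann problem with an $L^2$ right-hand side. Put $g_0:=2(f-\beta u)$; since $u\in H^1(D)\subset L^2(D,m)$ and $D$ is bounded, we have $g_0\in L^2(D,m)$, and the displayed identity says exactly that $u$ is the variational solution of
\[
-\Delta u=g_0 \text{ in } D,\qquad \frac{\partial u}{\partial\nu}=0 \text{ on } \partial D.
\]
The coercivity of $\mathcal{E}_\beta$ on $H^1(D)$ for $\beta>0$ guarantees that this variational solution is unique, so it coincides with the one treated in Grisvard's framework and there is no ambiguity in the characterization.

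It then remains to apply the quoted results in the two cases. If $D$ is a bounded $C^{1,1}$-domain, \cite[Theorem~2.2.2.5]{GP} gives $H^2$ regularity up to the boundary for the variational solution of the Neumann problem with $L^2$ data, whence $u\in H^2(D)$. If instead $D$ is a bounded convex $C^{1,\alpha}$-domain, then in particular $D$ is a bounded convex open set, and \cite[Theorem~3.2.1.3]{GP} yields the same conclusion $u\in H^2(D)$. In either case $G_\beta f=u\in H^2(D)$, as claimed.

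The only delicate point is verifying that the hypotheses of the two theorems are met verbatim: the lower-order term $\beta u$ and the factor $\tfrac12$ are harmless once absorbed into the right-hand side as above, and the normal-derivative boundary condition coincides with the conormal (Neumann) condition associated with $-\Delta$. The substantive content lies entirely in Grisvard's theorems---especially the convex case, where the boundary is assumed only $C^{1,\alpha}$ and global $H^2$ regularity up to the boundary cannot be extracted from standard $C^{1,1}$ elliptic regularity but follows instead from the convexity of $D$.
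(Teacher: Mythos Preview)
Your proposal is correct and follows exactly the approach the paper indicates: the paper simply cites \cite[Theorems~2.2.2.5 and 3.2.1.3]{GP} without further argument, and you have supplied the reduction---identifying $G_\beta f$ via \eqref{eq:weak} with the variational solution of the Neumann problem for $-\Delta$ with $L^2$ right-hand side $2(f-\beta u)$---that makes those citations applicable.
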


Any Lipschitz continuous function $f\colon \cl{D} \to \R$ is  of  Dini mean oscillation; see  \cite[Definition~2.7]{DLK} for the definition.  In particular,  any $f \in C^{\infty}_c(\R^d)|_{\cl{D}}$ is of  Dini mean oscillation.   
Hence,  we use Lemmas~\ref{lem:cls}, and \ref{lem:h2}  and \cite[Theorem~1.5]{DLK} to  obtain the following result.

\begin{lemma}\label{lem:core0}
Assume that $D$ is a bounded $C^{1,1}$-domain or a bounded convex $C^{1,\alpha}$-domain for some $\alpha \in (0,1]$.  Then,  for any $f \in C^{\infty}_c(\R^d)|_{\cl{D}}$ and $\beta >0$,  we have $G_\beta f \in C^2(\cl{D})$ and   $\langle \nabla G_\beta f,\nu  \rangle=0 $ on $ \partial D$.
\end{lemma}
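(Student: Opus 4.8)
The plan is to upgrade the a priori regularity of $u:=G_\beta f$ up to the boundary by invoking the Dini-type Schauder estimate for the Neumann problem in \cite[Theorem~1.5]{DLK}. From Lemma~\ref{lem:cls} we already know that $u \in C^\infty(D) \cap C^1(\cl{D})$, that $\beta u - (\Delta/2)u = f$ on $D$ (equivalently $\Delta u = 2(\beta u - f)$ on $D$), and that $\langle \nabla u, \nu \rangle = 0$ on $\partial D$; moreover, under the stated hypotheses on $D$, Lemma~\ref{lem:h2} gives $u \in H^2(D)$. Thus the second assertion of the lemma is already in hand, and it remains only to prove $u \in C^2(\cl{D})$.

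The key step is to check that the right-hand side $g := 2(\beta u - f) = \Delta u$ is of Dini mean oscillation on $D$. Since $f$ is the restriction to $\cl{D}$ of a function in $C^\infty_c(\R^d)$, it is Lipschitz on $\cl{D}$. Because $D$ is a bounded $C^{1,\alpha}$-domain it is in particular a bounded Lipschitz domain, so $\cl{D}$ is quasiconvex; hence the bound on $\nabla u$ coming from $u \in C^1(\cl{D})$ forces $u$ itself to be Lipschitz on $\cl{D}$. Consequently $g$ is Lipschitz on $\cl{D}$, and by the remark preceding this lemma every Lipschitz function on $\cl{D}$ is of Dini mean oscillation; thus $g$ is of Dini mean oscillation on $D$.

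Now I would apply \cite[Theorem~1.5]{DLK}, all of whose hypotheses are in force: the boundary $\partial D$ is $C^{1,\mathrm{Dini}}$ (established earlier in this appendix, since a bounded $C^{1,\alpha}$-boundary is $C^{1,\mathrm{Dini}^2}$ and hence $C^{1,\mathrm{Dini}}$); by Lemma~\ref{lem:h2} we have the requisite a priori Sobolev regularity $u \in H^2(D)$; and $u$ solves $\Delta u = g$ in $D$ with the homogeneous Neumann condition $\langle \nabla u, \nu\rangle = 0$ on $\partial D$, where $g$ is of Dini mean oscillation. The conclusion of that theorem is precisely that $\nabla^2 u$ extends continuously to $\cl{D}$, i.e., $u \in C^2(\cl{D})$. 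Combined with the Neumann condition already recorded in Lemma~\ref{lem:cls}, this gives the claim.

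The main obstacle is the verification that all hypotheses of the cited Dini–Schauder estimate are genuinely met — in particular, that $\Delta u$ is of Dini mean oscillation (which reduces, via the preceding remark, to the Lipschitz regularity of $G_\beta f$ up to $\partial D$) and that the needed a priori regularity $u \in H^2(D)$ holds. Both are supplied by Lemmas~\ref{lem:cls} and~\ref{lem:h2}, so the argument itself is short once these are invoked; the substantive analytic content lies entirely in those two lemmas and in \cite[Theorem~1.5]{DLK}.
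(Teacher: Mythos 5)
Your proof is correct and follows essentially the same route as the paper: combine Lemma~\ref{lem:cls}, Lemma~\ref{lem:h2}, and \cite[Theorem~1.5]{DLK}, using the fact that Lipschitz functions on $\cl{D}$ are of Dini mean oscillation. You are in fact slightly more careful than the paper in observing that the right-hand side $2(\beta u - f)$ of $\Delta u = 2(\beta u-f)$ involves $u$ itself, so one must also check that $u\in C^1(\cl{D})$ is Lipschitz via quasiconvexity of $D$ — a point the paper leaves implicit.
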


We say that a domain $D' \subset  \R^d$ is {\it quasiconvex}  if there exists $C \in (0,\infty)$ such that any pair of points $x,y \in D'$ can be joined by a curve $\gamma$ in $D'$ satisfying $\text{length}(\gamma) \le C|x-y|.$  Here,  $\text{length}(\gamma)$ denotes the length of $\gamma$.
As stated in \cite[Example~3.39]{BB},  any bounded Lipschitz domain is quasiconvex.  
In particular,   any bounded $C^{1,\alpha}$-domain ($\alpha \in (0,1]$) is quasiconvex.

\begin{proof}[Proof of Proposition~\ref{prop:core}]
Let $\beta>0.$  Since $ C^{\infty}_c(\R^d)|_{\cl{D}}$ is a dense subspace of $C(\cl{D})$ with respect to the sup-norm,  so is $G_\beta(C^{\infty}_c(\R^d)|_{\cl{D}})$.  
This  and  \cite[Proposition~1.3.1]{EK} show that $G_\beta(C^{\infty}_c(\R^d)|_{\cl{D}})$ is a core for $(\mathcal{L},\text{\rm Dom}(\mathcal{L}))$.   As explained in Section~2,   applying the It\^{o} formula to \eqref{eq:skorohod},  we see that $ C_{c,\text{Neu}}^2(\cl{D})$ is a subspace of $\text{Dom}(\mathcal{L})$.  Therefore,  the conclusion follows if we show that $G_\beta f \in  C_{c,\text{Neu}}^2(\cl{D})$ for any $f \in C^{\infty}_c(\R^d)|_{\cl{D}}$.  Let $f \in C^{\infty}_c(\R^d)|_{\cl{D}}$.   Since $D$ is quasiconvex,  Lemma~\ref{lem:core0} and 
the Whitney extension theorem~\cite[Theorem]{W} imply that $G_\beta f$ is extended to a $C^2$-function on $\R^d$   and   $\langle \nabla G_\beta f,\nu  \rangle=0 $ on $ \partial D$.   Consequently,  we have $G_\beta f \in  C_{c,\text{Neu}}^2(\cl{D})$.
\end{proof}

\end{appendix}

\begin{ack}
The authors thank the anonymous referees for their helpful comments that improved the quality of the paper.
\end{ack}

\bibliographystyle{amsplain}

\begin{bibdiv}
\begin{biblist}

\bib{AF}{book}{
   author={Adams, Robert A.},
   author={Fournier, John J. F.},
   title={Sobolev spaces},
   series={Pure and Applied Mathematics (Amsterdam)},
   volume={140},
   edition={2},
   publisher={Elsevier/Academic Press, Amsterdam},
   date={2003},
   pages={xiv+305},
}

\bib{BP}{article}{
   author={Ba\~{n}uelos, Rodrigo},
   author={Pang, Michael M. H.},
   title={Stability and approximations of eigenvalues and eigenfunctions for
   the Neumann Laplacian. I},
   journal={Electron. J. Differential Equations},
   date={2008},
   pages={No. 145, 13},
}

\bib{BH}{article}{
   author={Bass, Richard F.},
   author={Hsu, Pei},
   title={Some potential theory for reflecting Brownian motion in H\"{o}lder and
   Lipschitz domains},
   journal={Ann. Probab.},
   volume={19},
   date={1991},
   number={2},
   pages={486--508},
}

\bib{BKU}{article}{
   author={Bass, Richard F.},
   author={Kumagai, Takashi},
   author={Uemura, Toshihiro},
   title={Convergence of symmetric Markov chains on $\Bbb Z^d$},
   journal={Probab. Theory Related Fields},
   volume={148},
   date={2010},
   number={1-2},
   pages={107--140},
   issn={0178-8051},
}

\bib{BB}{book}{
   author={Brudnyi, Alexander},
   author={Brudnyi, Yuri},
   title={Methods of geometric analysis in extension and trace problems.
   Volume 1},
   series={Monographs in Mathematics},
   volume={102},
   publisher={Birkh\"{a}user/Springer Basel AG, Basel},
   date={2012},
   pages={xxiv+560},
   isbn={978-3-0348-0208-6},
}

\bib{BIK}{article}{
   author={Burago, Dmitri},
   author={Ivanov, Sergei},
   author={Kurylev, Yaroslav},
   title={A graph discretization of the Laplace-Beltrami operator},
   journal={J. Spectr. Theory},
   volume={4},
   date={2014},
   number={4},
   pages={675--714},
   issn={1664-039X},
}


\bib{BC3}{article}{
   author={Burdzy, Krzysztof},
   author={Chen, Zen-Qing},
   title={Weak convergence of reflecting Brownian motions},
   journal={Electron. Comm. Probab.},
   volume={3},
   date={1998},
   pages={29--33},
   issn={1083-589X},
}

\bib{BC2}{article}{
   author={Burdzy, Krzysztof},
   author={Chen, Zhen-Qing},
  title={Reflecting random walk in fractal domains},
   journal={Ann. Probab.},
   volume={41},
   date={2013},
   number={4},
   pages={2791--2819},
   issn={0091-1798},
}

\bib{C}{article}{
   author={Chen, Zhen Qing},
   title={On reflecting diffusion processes and Skorokhod decompositions},
   journal={Probab. Theory Related Fields},
   volume={94},
   date={1993},
   number={3},
   pages={281--315},
   issn={0178-8051},
}

\bib{CKK}{article}{
   author={Chen, Zhen-Qing},
   author={Kim, Panki},
   author={Kumagai, Takashi},
   title={Discrete approximation of symmetric jump processes on metric
   measure spaces},
   journal={Probab. Theory Related Fields},
   volume={155},
   date={2013},
   number={3-4},
   pages={703--749},
}

\bib{cr}{article}{
   author={Croydon, David A.},
   title={Scaling limit for the random walk on the largest connected
   component of the critical random graph},
   journal={Publ. Res. Inst. Math. Sci.},
   volume={48},
   date={2012},
   number={2},
   pages={279--338},
}

\bib{cr2}{article}{
   author={Croydon, D. A.},
   title={Scaling limits of stochastic processes associated with resistance
   forms},
   journal={Ann. Inst. Henri Poincar\'{e} Probab. Stat.},
   volume={54},
   date={2018},
   number={4},
   pages={1939--1968},
}

\bib{DS}{article}{
  author={Davis, Erik},
   author={Sethuraman, Sunder},
   title={Approximating geodesics via random points},
   journal={Ann. Appl. Probab.},
   volume={29},
   date={2019},
   number={3},
   pages={1446--1486},
   issn={1050-5164},
}

\bib{DLK}{article}{
   author={Dong, Hongjie},
   author={Lee, Jihoon},
   author={Kim, Seick},
   title={On conormal and oblique derivative problem for elliptic equations
   with Dini mean oscillation coefficients},
   journal={Indiana Univ. Math. J.},
   volume={69},
   date={2020},
   number={6},
   pages={1815--1853},
   issn={0022-2518},
}

\bib{EK}{book}{
   author={Ethier, Stewart N.},
   author={Kurtz, Thomas G.},
   title={Markov processes},
   series={Wiley Series in Probability and Mathematical Statistics:
   Probability and Mathematical Statistics},
   publisher={John Wiley \& Sons, Inc., New York},
   date={1986},
   pages={x+534},
   isbn={0-471-08186-8},
}

\bib{FOT}{book}{
   author={Fukushima, Masatoshi},
   author={Oshima, Yoichi},
   author={Takeda, Masayoshi},
   title={Dirichlet forms and symmetric Markov processes},
   series={De Gruyter Studies in Mathematics},
   volume={19},
   edition={Second revised and extended edition},
   publisher={Walter de Gruyter \& Co., Berlin},
   date={2011},
   pages={x+489},
}

\bib{FT}{article}{
   author={Fukushima, Masatoshi},
   author={Tomisaki, Matsuyo},
   title={Construction and decomposition of reflecting diffusions on
   Lipschitz domains with H\"{o}lder cusps},
   journal={Probab. Theory Related Fields},
   volume={106},
   date={1996},
   number={4},
   pages={521--557},
   issn={0178-8051},
}

\bib{Gar}{article}{
   author={Garc\'{\i}a Trillos, Nicol\'{a}s},
   title={Gromov--Hausdorff limit of Wasserstein spaces on point clouds},
   journal={Calc. Var. Partial Differential Equations},
   volume={59},
   date={2020},
   number={2},
   pages={Paper No. 73, 43 pp},
   issn={0944-2669},
}

\bib{GT}{book}{
   author={Gilbarg, David},
   author={Trudinger, Neil S.},
   title={Elliptic partial differential equations of second order},
   series={Classics in Mathematics},
   publisher={Springer-Verlag, Berlin},
   date={2001},
   pages={xiv+517},
}

\bib{GP}{book}{
   author={Grisvard, Pierre},
   title={Elliptic problems in nonsmooth domains},
   series={Classics in Applied Mathematics},
   volume={69},
   publisher={Society for Industrial and Applied Mathematics (SIAM),
   Philadelphia, PA},
   date={2011},
   pages={xx+410},
   isbn={978-1-611972-02-3},
}

\bib{IK}{article}{
   author={Ishiwata, Satoshi},
   author={Kawabi, Hiroshi},
   title={A graph discretized approximation of semigroups for diffusion with
   drift and killing on a complete Riemannian manifold},
   journal={Math. Ann.},
   volume={390},
   date={2024},
   number={2},
   pages={2459--2495},
   issn={0025-5831},
}


\bib{SV}{article}{
   author={Stroock, Daniel W.},
   author={Varadhan, S. R. S.},
   title={Diffusion processes with boundary conditions},
   journal={Comm. Pure Appl. Math.},
   volume={24},
   date={1971},
   pages={147--225},
   issn={0010-3640},
}

\bib{SZ}{article}{
   author={Stroock, Daniel W.},
   author={Zheng, Weian},
   title={Markov chain approximations to symmetric diffusions},
   journal={Ann. Inst. H. Poincar\'{e} Probab. Statist.},
   volume={33},
   date={1997},
   number={5},
   pages={619--649},
   issn={0246-0203},
}

\bib{W}{article}{
   author={Whitney, Hassler},
   title={Functions differentiable on the boundaries of regions},
   journal={Ann. of Math. (2)},
   volume={35},
   date={1934},
   number={3},
   pages={482--485},
   issn={0003-486X},
}
\end{biblist}
\end{bibdiv}

\end{document}